\DeclareMathOperator{\Spec}{Spec} 
\DeclareMathOperator{\Sets}{Sets} 
\DeclareMathOperator{\Sch}{Sch} 
\DeclareMathOperator{\Quot}{Quot}
\DeclareMathOperator{\Hilb}{Hilb}
\DeclareMathOperator{\Ann}{Ann} 
\DeclareMathOperator{\Span}{Span} 
\DeclareMathOperator{\Gr}{Gr}  
\DeclareMathOperator{\Coh}{Coh}  
\DeclareMathOperator{\supp}{Supp}
\DeclareMathOperator{\len}{length}
\DeclareMathOperator{\opp}{op}  
\DeclareMathOperator{\redu}{red}
\newcommand{\Calf}{\mathcal{F}} 
\newcommand{\Calo}{\mathcal{O}}
\newcommand{\Cali}{\mathcal{I}} 
\newcommand{\Calz}{\mathcal{Z}}
\newcommand{\Calk}{\mathcal{K}}
\newcommand{\C}{\mathbb{C}} 
\newcommand{\Z}{\mathbb{Z}} 
\newcommand{\A}{\mathbb{A}}
\newcommand{\N}{\mathbb{N}}
\newcommand{\mm}{\mathfrak{m}}
\DeclareMathOperator{\Hom}{Hom}
\newtheorem{theorem}{Theorem}[section]
\newtheorem{proposition}[theorem]{Proposition}
\newtheorem{introtheorem}{Theorem}[section] 
\newtheorem{corollary}[theorem]{Corollary}
\newtheorem{lemma}[theorem]{Lemma}
\theoremstyle{definition}
\newtheorem{definition}[theorem]{Definition}
\newtheorem{notation}[theorem]{Notation}
\newtheorem*{convention}{Convention}
\theoremstyle{remark} 
\newtheorem{remark}[theorem]{Remark}
\newtheorem{example}[theorem]{Example}
\newtheorem*{conjecture*}{Conjecture}
\numberwithin{equation}{section}
\theoremstyle{plain} 
\newtheorem{conjecture}{Conjecture}
\newtheorem*{theorem*}{\textbf{Theorem}}
\title[Unexpected but recurrent phenomena for Quot and Hilbert schemes of points]{Unexpected but recurrent phenomena for\\ Quot and Hilbert schemes of points}
\author[F. Giovenzana]{Franco Giovenzana} \address[Franco Giovenzana]{Laboratoire de mathématiques d’Orsay, Université Paris Saclay, Rue Michel Magat, Bât. 307, 91405 Orsay, France}
\email{\href{mailto:franco.giovenzana@universite-paris-saclay.fr}{\tt franco.giovenzana@universite-paris-saclay.fr}}
\author[L. Giovenzana]{Luca Giovenzana} \address[Luca Giovenzana]{Department of Pure Mathematics\\ University of Sheffield\\ Hicks Building, Hounsfield Road\\ Sheffield, S3 7RH \\ United Kingdom}
\email{\href{mailto:l.giovenzana@sheffield.ac.uk}{\tt l.giovenzana@sheffield.ac.uk}}
\author[M. Graffeo]{Michele Graffeo} \address[Michele Graffeo]{Scuola Internazionale Superiore di Studi Avanzati (SISSA), Via Bonomea 265, 34136 Trieste, Italy}
\email{\href{mailto:mgraffeo@sissa.it}{\tt mgraffeo@sissa.it}}
\author[P. Lella]{Paolo Lella} \address[Paolo Lella]{Dipartimento di Matematica\\ Politecnico di Milano\\ Piazza Leonardo da Vinci 32\\ 20133 Milan\\ Italy}
\email{\href{mailto:paolo.lella@polimi.it}{\tt paolo.lella@polimi.it}}
\thanks{F.G.'s research is funded by Deutsche Forschungsgemeinschaft (DFG, German Research Foundation), Projektnummer 509501007 and partially supported by the European Research Council (ERC) under the European
Union’s Horizon 2020 research and innovation programme (ERC-2020-SyG-854361-HyperK). P.L.'s research is funded by the project PRIN 2020 \lq\lq Squarefree Gr\"obner degenerations, special varieties and related topics\rq\rq~(MUR, project number 2020355B8Y).
All the authors are members of the \lq\lq Gruppo Nazionale di Ricerca INdAM  GNSAGA\rq\rq}
\subjclass[2020]{
14C05, 
13D10, 
13F55 
13P10
}
\keywords{Quot scheme, Parity conjecture, Hilbert scheme of points, nested Hilbert scheme}
\begin{document}

\begin{abstract} 
We investigate some aspects of the geometry of two classical generalisations of the Hilbert schemes of points. Precisely, we show that parity conjecture for $\Quot_r^d\A^3$ already fails for $d=8$ and $r=2$ and that lots of the elementary components of the nested Hilbert schemes of points on smooth quasi-projective varieties of dimension at least 4 are generically non-reduced. We also deduce that nested Hilbert schemes of points on smooth surfaces have generically non-reduced components. Finally, we give an infinite family of elementary components of the classical Hilbert schemes of points. 
\end{abstract}
	
	\maketitle
 
\section*{Introduction}
Moduli spaces of sheaves are one of the most investigated objects in algebraic geometry. In the present note we are mainly interested in $\Quot$-schemes  and nested Hilbert schemes of points.

Given a quasi-projective variety $X$ defined over the field of complex numbers and two positive integers $r,d>0$,\hfill the\hfill $\Quot$-scheme\hfill $\Quot_r^d X$\hfill  parametrises\hfill isomorphism $\Calo_X$-classes\hfill of\hfill zero-dimensional\hfill quotients\\ $\Calo _{X}^r\twoheadrightarrow \mathcal F $  such that $\dim_\C H^0(X,\mathcal F)=d$. It is a quasi-projective scheme and it was introduced by Alexander  Grothendieck in \cite{FGA}.

When $r=1$ the points $[\Calf]\in \Quot_1^dX$ are understood as structure sheaves of closed zero-dimensional subschemes of $X$. In this special case the $\Quot$-scheme is called Hilbert scheme of $d$ points on $X$. We denote it by $\Hilb^{d} X$. This notion generalises to that of \emph{nested} Hilbert scheme of points as follows. If $\mathbf d\in\Z^r$ is a non-decreasing sequence of non-negative integers, the nested Hilbert scheme $\Hilb^{\mathbf d} X$ is the fine moduli space parametrising nestings $Z_1\subset \cdots \subset Z_r  $ of zero-dimensional closed subschemes of $X$ of respective length $ d_i$, for $i=1,\ldots,r$, where $\mathbf{d}=(d_1,\ldots,d_r)$.
We deal only with the case of a smooth variety $X$, and since the properties we address depend only locally on $X$, there is no harm in considering $X=\mathbb A^n_{\C}$. In this setting, the case $n=3$ is probably the most fascinating, being squashed between smooth Hilbert schemes and fairly well-understood $\Quot$-schemes for $n\leqslant 2$ and wildly singular Hilbert schemes for $n\geqslant 4$, \cite{joachim-pathologies,szachniewicz2021nonreducedness}. To give an idea of the lack of knowledge regarding the three-dimensional case, just think that the exact value of $d$ for which $\Hilb^{d} X$ and $\Quot^d_r X$ becomes reducible is not known, while this question has a complete answer in any dimension other than three \cite{8POINTS,JOACHIM,FPS,10points,Iarrob,IARRO,MAZZOLA,Klemen}.  For more details, we refer to \cite{JoachimQuestions} and therein references.  Similar problems are open for nested Hilbert schemes of points. For instance, when $\dim X=2$, the nested Hilbert scheme is irreducible for $r\leqslant 2$ and reducible in general for $r\geqslant5$. The cases $r=3,4$ are not yet understood \cite{IRRonenested,ALESSIONESTED}.

\begin{convention}
    Whenever not specified, we work over the field of complex numbers $\C$.  For a  quasi projective variety $X$, a fat point $Z\subset X$ is a closed zero-dimensional subscheme with support one point.
\end{convention}

\subsection*{Parity conjecture}
Motivated by expectations in enumerative geometry,  it was proven in \cite{PANDHARIPANDE} that as soon as $[Z]\in \Hilb^d \A^3$ is a monomial subscheme, the following formula is true 
\[
d\equiv {\dim_{\C}\mathsf T_{[Z]} \Hilb^d \A^3}\pmod2,
\]
		where $ \mathsf T_{[Z]}\Hilb^d \A^3$ denotes the tangent space to $\Hilb^d \A^3$ at $[Z]$. This result led Okounkov and Pandharipande to conjecture in \cite{CONGETTURA} that the same holds true at any point of the Hilbert scheme of points on a smooth threefold. More recently, a generalised version of the conjecture was posed by Ramkumar and Sammartano in \cite{SAMMARTANO}, where they proved its veridicity for all homogeneous $\Calo_{\A^3}$-modules of finite length. The conjecture, in its final form, is the following. 

\begin{conjecture} [Parity conjecture]\label{conjecture2}
		Let $r,d\in\N$ be two positive integers and let $X$ be an irreducible smooth threefold. Then, for any $[\mathcal F]\in \Quot_r^d X$, one has
		\[\dim_{\C} \mathsf T_{\mbox{\tiny $[\mathcal F]$}}\Quot_r^d X\equiv r\cdot d\  \pmod 2.\]
	\end{conjecture}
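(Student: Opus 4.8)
\emph{Proof strategy.} The plan is to turn the congruence into a purely local statement about $\operatorname{Ext}$-modules over a polynomial ring, and then to attack that statement homologically. Since the tangent space is a local invariant, replace $X$ by $\A^3$ and set $S=\C[x,y,z]$. Let $\mathcal F$ be the length-$d$ module with $[\mathcal F]\in\Quot_r^d\A^3$, and fix a presentation $0\to\mathcal K\to S^r\to\mathcal F\to 0$, so that $\mathsf T_{[\mathcal F]}\Quot_r^d\A^3=\Hom_S(\mathcal K,\mathcal F)$. As $\mathcal F$ has finite length, $\operatorname{pd}_S\mathcal F=3$ and $\operatorname{pd}_S\mathcal K=2$, hence $\operatorname{Ext}^i_S(\mathcal K,\mathcal F)=0$ for $i\geqslant 3$; a minimal free resolution of $\mathcal F$ gives $\chi_S(\mathcal F,\mathcal F)=(\dim_\C\mathcal F)\cdot\operatorname{rank}_S\mathcal F=0$ because $\mathcal F$ is torsion, so additivity of $\chi_S(-,\mathcal F)$ on the presentation yields $\chi_S(\mathcal K,\mathcal F)=r\,\chi_S(S,\mathcal F)-\chi_S(\mathcal F,\mathcal F)=rd$, i.e.
\[
\dim_\C\mathsf T_{[\mathcal F]}\Quot_r^d\A^3=rd+\dim_\C\operatorname{Ext}^1_S(\mathcal K,\mathcal F)-\dim_\C\operatorname{Ext}^2_S(\mathcal K,\mathcal F).
\]
Feeding the presentation into $\Hom_S(-,\mathcal F)$ and using $\operatorname{Ext}^{>0}_S(S^r,\mathcal F)=0$ gives $\operatorname{Ext}^i_S(\mathcal K,\mathcal F)\cong\operatorname{Ext}^{i+1}_S(\mathcal F,\mathcal F)$ for $i\geqslant 1$; Serre duality for coherent sheaves with finite support on $\A^3$ together with $\omega_{\A^3}\cong\Calo_{\A^3}$ gives $\operatorname{Ext}^i_S(\mathcal F,\mathcal F)\cong\operatorname{Ext}^{3-i}_S(\mathcal F,\mathcal F)^\vee$, so $\dim_\C\operatorname{Ext}^\bullet_S(\mathcal F,\mathcal F)$ is palindromic. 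Combining, $\dim_\C\mathsf T_{[\mathcal F]}\Quot_r^d\A^3\equiv rd+\dim_\C\operatorname{End}_S(\mathcal F)+\dim_\C\operatorname{Ext}^1_S(\mathcal F,\mathcal F)\pmod 2$, and \Cref{conjecture2} becomes equivalent to the following assertion, independent of $r$ and containing the Hilbert-scheme case $r=1$: for every finite-length $S$-module $\mathcal F$,
\[
\sum_{i=0}^{3}\dim_\C\operatorname{Ext}^i_S(\mathcal F,\mathcal F)\equiv 0\pmod 4.
\]

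To prove this congruence I see two routes. (a) \emph{Critical-locus model.} Via the noncommutative presentation, $\Quot_r^d\A^3\cong\operatorname{Crit}(W)$, where $W=\operatorname{Tr}(A[B,C])$ is the cubic potential descended to the smooth $(2d^2+rd)$-dimensional quotient $U$ of the cyclic locus in $\{(A,B,C,\phi):\phi\colon\C^r\to\C^d\}$ by $\Gl_d$; since $\mathsf T_{[\mathcal F]}\Quot_r^d\A^3=\ker\!\big(\operatorname{Hess}_{[\mathcal F]}W\big)$, the congruence says exactly that $\operatorname{rank}\operatorname{Hess}_{[\mathcal F]}W$ is even, and one would try to force this from the cubic shape of $W$ and the self-dual ($(-1)$-shifted symplectic) obstruction theory on moduli of sheaves on a Calabi--Yau threefold. (b) \emph{Degeneration.} Degenerate $\mathcal F$ to a homogeneous $S$-module along a one-parameter subgroup and reduce to the homogeneous case of \cite{SAMMARTANO}, using the upper-semicontinuity of $\dim_\C\operatorname{End}_S(\mathcal F)$ and $\dim_\C\operatorname{Ext}^1_S(\mathcal F,\mathcal F)$ to control the special fibre.

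The main obstacle is precisely this last step. In route (a), a symmetric bilinear form over $\C$ carries no constraint on the parity of its rank, and the self-duality of the obstruction theory of $\operatorname{Crit}(W)$ is a formal consequence of the symmetry of the Hessian, hence supplies nothing new; one would need to use the Calabi--Yau geometry more seriously (for instance a symplectic pairing on $U$ itself, not merely the potential $W$). In route (b), semicontinuity bounds the jumps of $\dim_\C\operatorname{End}_S(\mathcal F)$ and $\dim_\C\operatorname{Ext}^1_S(\mathcal F,\mathcal F)$ under specialisation but does not pin their parities, and—since $\chi_S(\mathcal F,\mathcal F)=0$ relates the four $\operatorname{Ext}$-dimensions only in pairs already matched by duality—there is no a priori reason the two jumps should be congruent modulo $2$. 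Closing this gap is the crux of the conjecture; as the abstract announces, it cannot in fact be closed, the congruence already failing at $d=8$, $r=2$.
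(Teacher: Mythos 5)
There is no proof in the paper to measure you against, because \Cref{conjecture2} is false and the paper's contribution concerning this statement is its refutation: \Cref{thm:main} exhibits $[M]\in\Quot_2^8\A^3$ (a deformation of the monomial submodule $U=J_1\mathbf e_1\oplus J_2\mathbf e_2$) with $\dim_\C\mathsf T_{[M]}\Quot_2^8\A^3=37\not\equiv 16\pmod 2$, on top of the $r=1$, $d\geqslant 12$ counterexamples of \cite{GGGL}. So your proposal cannot be completed, and indeed you concede as much in your closing sentences. The two routes you sketch fail exactly where you say they do, and unavoidably so: in (a) the critical-locus/self-dual obstruction theory encodes only the symmetry of the Hessian of $W$, which places no constraint on the parity of its rank; in (b) specialisation to a homogeneous module, where \Cref{thm:partyhomo} applies, loses all parity information, since $\dim_\C\Hom_S(\mathcal K,\mathcal F)$ is merely upper semicontinuous and its jumps need not be even --- in fact the paper's counterexamples are manufactured precisely by deforming monomial (hence homogeneous) submodules away from the locus where the homogeneous result holds (\Cref{subsec:originalexample}, \Cref{lem:unobstructed}).

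That said, your homological reduction is correct and worth keeping: from $\chi_S(\mathcal K,\mathcal F)=rd$, the isomorphisms $\operatorname{Ext}^i_S(\mathcal K,\mathcal F)\cong\operatorname{Ext}^{i+1}_S(\mathcal F,\mathcal F)$ for $i\geqslant 1$, and duality for finite-length sheaves on $\A^3$, one gets the exact identity $\dim_\C\Hom_S(\mathcal K,\mathcal F)=rd+\dim_\C\operatorname{Ext}^1_S(\mathcal F,\mathcal F)-\dim_\C\operatorname{End}_S(\mathcal F)$, so the conjecture for every $r$ is equivalent to $\dim_\C\operatorname{End}_S(\mathcal F)\equiv\dim_\C\operatorname{Ext}^1_S(\mathcal F,\mathcal F)\pmod 2$ for \emph{all} finite-length modules $\mathcal F$, independently of the chosen surjection. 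This is consistent with, and partly explains, the phenomenology of the paper: passing from $r=1$ to $r\geqslant 2$ enlarges the class of quotients from cyclic to arbitrary finite-length modules, which is why counterexamples of smaller length ($d=8$, needing two generators, cf.\ \Cref{subsec:explanation}) become available; for the paper's module $M$ your formula gives $\dim_\C\operatorname{Ext}^1_S(M,M)-\dim_\C\operatorname{End}_S(M)=21$, odd, so the "crux" congruence is genuinely false and no further idea can close the gap.
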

 \Cref{conjecture2} has been recently disproved by the authors by providing a ten-dimensional family of zero-dimensional schemes in $\Hilb^{12} \A^3$ with odd-dimensional tangent space {\cite[Corollary 3.2]{GGGL}}. This argument provides counterexamples to the parity conjecture for any $(r,d)$ with $r\geqslant1$ and $d\geqslant 12$.  
 In \Cref{sec:parity} we propose a variant of the counterexamples given in \cite{GGGL} showing  that, at the price of increasing the rank one has counterexamples of smaller length.  Our main result in this direction is the following. 
\begin{introtheorem}[{\Cref{thm:main}}]\label{thmintro:main}
    The parity conjecture (\Cref{conjecture2})  is false when $(r,d)$ runs in the following range:
    \begin{itemize}
        \item $r=1$ and $d\geqslant 12$,
        \item $r\geqslant 2$ and $d\geqslant8$.
        \end{itemize}
\end{introtheorem}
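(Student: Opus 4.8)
The plan is to reduce the statement to a $\Hom$/$\operatorname{Ext}^1$ computation for a carefully chosen finite length module, in the spirit of \cite{GGGL}, and then to propagate a single small counterexample across the whole range. Recall that a point $[\Calf]\in\Quot_r^d\A^3$ is a surjection $\Calo_{\A^3}^{\,r}\twoheadrightarrow\Calf$ with kernel, say, $\Calk$, and that $\mathsf T_{[\Calf]}\Quot_r^d\A^3\cong\Hom_{\Calo}(\Calk,\Calf)$. Applying $\Hom_\Calo(-,\Calf)$ to $0\to\Calk\to\Calo_{\A^3}^{\,r}\to\Calf\to 0$ and using $\operatorname{Ext}^1_\Calo(\Calo_{\A^3}^{\,r},\Calf)=0$ gives
\[
\dim_\C\mathsf T_{[\Calf]}\Quot_r^d\A^3 = rd-\dim_\C\Hom_\Calo(\Calf,\Calf)+\dim_\C\operatorname{Ext}^1_\Calo(\Calf,\Calf),
\]
so \Cref{conjecture2} holds at $[\Calf]$ if and only if $\dim_\C\Hom_\Calo(\Calf,\Calf)\equiv\dim_\C\operatorname{Ext}^1_\Calo(\Calf,\Calf)\pmod 2$ — a condition which is independent of the rank $r$. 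Since \cite[Corollary 3.2]{GGGL} already exhibits, for every $d\geqslant 12$ and every $r\geqslant 1$, a point of $\Quot_r^d\A^3$ with odd-dimensional tangent space, the new work is confined to the cases $r\geqslant 2$ and $d\in\{8,9,10,11\}$.

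The core of the argument is therefore to produce, for each $d\in\{8,9,10,11\}$, a finite length $\Calo_{\A^3}$-module $\Calf_d$ that is minimally generated by two elements, has $\dim_\C H^0(\A^3,\Calf_d)=d$, and satisfies $\dim_\C\Hom_\Calo(\Calf_d,\Calf_d)\not\equiv\dim_\C\operatorname{Ext}^1_\Calo(\Calf_d,\Calf_d)\pmod 2$; equivalently, a point of $\Quot_2^d\A^3$ whose tangent space has odd dimension. I would look for $\Calf_d$ as a general member of an explicit flat family of two-generated modules, built as a rank-two analogue of the family of length-$12$ subschemes of \cite{GGGL} — concretely, as cokernels of a suitable family of matrices of forms on $\A^3$, with the degrees of the entries chosen so that the cokernel has the prescribed length $d$ — and I would fix the parity of the tangent space by combining the upper semicontinuity of $[\Calf]\mapsto\dim_\C\mathsf T_{[\Calf]}\Quot_2^d\A^3$ with an explicit evaluation of $\Hom_\Calo$ and $\operatorname{Ext}^1_\Calo$ at one well-understood member of the family. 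This is the step I expect to be the main obstacle: $d=8$ is a small length, so there is little slack in arranging simultaneously that the module be minimally two-generated and that the parity come out wrong, and pinning this down needs both the right choice of family and a delicate cohomology computation — in practice best certified with a computer algebra system, exactly as in \cite{GGGL}. Note also that these examples must genuinely be constructed for each of $d=8,9,10,11$: adding a general reduced point to a two-generated module supported at a point raises the number of generators to three, so one cannot simply import the $d=8$ example into $\Quot_2^9\A^3$.

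It then remains to propagate. To pass from rank $2$ to an arbitrary rank $r\geqslant 3$ at fixed $d\in\{8,9,10,11\}$, compose a two-generator presentation $\Calo_{\A^3}^{\,2}\twoheadrightarrow\Calf_d$ with the projection $\Calo_{\A^3}^{\,r}\to\Calo_{\A^3}^{\,2}$; the new kernel is the old one together with a free summand $\Calo_{\A^3}^{\,r-2}$, whence
\[
\dim_\C\mathsf T_{[\Calf_d]}\Quot_r^d\A^3 = \dim_\C\mathsf T_{[\Calf_d]}\Quot_2^d\A^3+(r-2)d,
\]
which differs from $rd$ by $\dim_\C\mathsf T_{[\Calf_d]}\Quot_2^d\A^3-2d$, an odd number; so the failure of parity persists. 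The cases $r\geqslant 2$, $d\geqslant 12$, as well as $r=1$, $d\geqslant 12$, are \cite[Corollary 3.2]{GGGL}. Assembling these three families of cases covers precisely the range in the statement.
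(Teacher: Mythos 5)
Your reduction is sound as far as it goes: the long exact sequence obtained by applying $\Hom_{\Calo}(-,\Calf)$ to $0\to\Calk\to\Calo_{\A^3}^{\,r}\to\Calf\to0$ does show that the parity condition depends only on the module $\Calf$ and not on the rank of the presenting free module, and this cleanly packages both the cases $d\geqslant12$ (via \cite{GGGL}) and the passage from rank $2$ to rank $r\geqslant3$ (the paper instead increases the rank one step at a time by adding a free summand to the kernel, which amounts to the same computation). But there is a genuine gap at exactly the point you yourself flag as the main obstacle: you never produce the module $\Calf_8$. The entire new content of \Cref{thm:main} is the existence of one explicit point of $\Quot_2^8\A^3$ with odd-dimensional tangent space, and a description of where one might look for it is not a proof. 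The paper supplies it concretely: starting from the monomial submodule $U=(x,y^2,yz^2,z^3)\mathbf{e}_1\oplus(x,y^2,yz,z^2)\mathbf{e}_2\subset R^{\oplus2}$ (of colength $5+3=8$), it perturbs the generators $x\,\mathbf{e}_2$, $y^2\,\mathbf{e}_2$, $z^2\,\mathbf{e}_2$ by the socle elements $yz\,\mathbf{e}_1$, $z^2\,\mathbf{e}_1$ of the first summand; \Cref{lem:unobstructed} guarantees that this yields a flat $\A^1$-family, and the generic fibre $M$ has $\dim_\C\mathsf T_{[M]}\Quot_2^8\A^3=37$, certified by a \textit{Macaulay2} computation. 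Without this (or an equivalent) explicit example, your argument establishes nothing for $8\leqslant d\leqslant11$.

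Second, your claim that the $d=8$ example cannot be imported into $\Quot_2^9\A^3$ because ``adding a general reduced point raises the number of generators to three'' is false, and it leads you to quadruple the workload unnecessarily. For a finite-length module supported at several distinct points, the minimal number of generators is the \emph{maximum}, not the sum, of the local numbers of generators at the various points; so if $\Calf_8$ is a quotient of $\Calo^{\oplus2}$ supported away from $p$, then $\Calf_8\oplus\C_p$ is again a quotient of $\Calo^{\oplus2}$. This is precisely how the paper propagates in the length: writing $1=m+n$ with $m\in\Ann(R^{\oplus r}/M)$ and $n\in\Ann(R^{\oplus r}/N)$, one checks that the diagonal map $R^{\oplus r}\to(R^{\oplus r}/M)\oplus(R^{\oplus r}/N)$ is surjective with kernel $M\cap N$, and the tangent space splits as a direct sum over the two supports, growing by $r+2\equiv r\pmod2$, so the parity failure persists. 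Hence a single counterexample at $(r,d)=(2,8)$ suffices for the whole range $r\geqslant2$, $d\geqslant8$; you do not need separate constructions for $d=9,10,11$, but you do indispensably need the one at $d=8$.
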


We remark that the counterexamples given in \cite{GGGL} by the authors prove the case $r=1 $ in \Cref{thmintro:main}. They are smoothable and the dimension of the tangent space to the Hilbert scheme at that points differs by 9 from the dimension of the smoothable component while the gap between the dimension of the principal component \cite{Jelisiejewsivic} and the dimension of the tangent space of the first counterexample in $\Quot_2^{8} \mathbb{A}^3$ is 5. In \Cref{subsec:explanation} we show that it is possible to construct a counterexample of rank 2 and length 8 starting from a counterexample of rank 1.
Following the general principle that the higher the rank $r$ is the more singular the $\Quot$-scheme is, we expect to find some other counterexample of length $d\leqslant 12$ and $r=1$ with a lower gap.
Recent results about the singularity of the Hilbert schemes announced at the workshop \lq\lq {The Geometry of Hilbert schmes of points\rq\rq\footnote{\href{https://sites.google.com/view/ghisp2024/home}{The Geometry of Hilbert schmes of points}, Levico Terme (TN), Italy, May 6-10, 2024.} suggest that there is not much room for a lower gap between the dimension of the tangent space and its expected one. On the other hand it is possible a priori to find more singular counterexamples of possibly smaller length on $\Hilb^d\A^3$.}

We also remark that the construction in \cite{GGGL} is valid in any characteristic other than two, where the problem is still open. Similarly, \Cref{thmintro:main} is true for an algebraically closed field of any characteristic other than two.

We conclude this section mentioning that, building upon the example given in \cite{GGGL}, a counterexample to another longstanding conjecture had been produced in \cite{behrendnonconstant}. This conjecture was about the constancy of the Behrend function, which is a constructible function attached to any scheme of finite type over the complex numbers \cite{Beh,SignAnn}. This function was explicitly computed in very few cases \cite{Beh,GRICOLFI,SignAnn} and no general method for computing its values is currently known. In the last part of \Cref{sec:parity} we prove that the counterexample given in \cite{behrendnonconstant} is smoothable (\Cref{prop:smoothableBeh}) and we give a counterexample of smaller length and higher rank in \Cref{ex:Beh}.

\subsection*{Nested Hilbert scheme}

Generalisations of Hilbert schemes, are provided by nested Hilbert schemes and, more generally by nested $\Quot$-schemes \cite{NESTQUOT,NESTQUOTFR} or by double nested Hilbert schemes \cite{Mon_double_nested}. In this section, we deal with nested Hilbert schemes of points. Roughly speaking, they are defined as the fine moduli spaces parametrising nestings of zero-dimensional closed subschemes of a given quasi-projective variety.  
While dealing with these objects one suddenly realises that their geometry gets  involved quickly. For instance, nested Hilbert schemes are already reducible when $\dim X\geqslant2$, while double nested Hilbert schemes of points on smooth curves are in general reducible and have many smoothable components, i.e.~components whose generic point corresponds to a nesting involving only smooth subschemes, \cite{dbnhs,ALESSIONESTED}.
Another typical question about Hilbert schemes concerns their schematic structure. Precisely, in many instances it is not known whether they are reduced or, even worse, if they have generically non-reduced components.
Among the wild zoo of irreducible components of the Hilbert schemes of points, a special r\^ole is played by elementary components, i.e. irreducible components parametrising fat points. These components are considered as the building blocks of Hilbert schemes as all components are generically étale-locally products of elementary ones \cite{Iarrocomponent}.
Thanks to \cite{ELEMENTARY}, we know that being generically reduced for an elementary component is equivalent to have the TNT\footnote[2]{The acronym "TNT" stands for \textit{trivial negative tangent}.} property at its general point, i.e.~that the negative tangents at its general point consist of translations only, see \Cref{def:negativetangents}. As for the irreducibility problem, the situation becomes more complicated passing from classical Hilbert schemes to nested ones. Indeed, it is known that the Hilbert scheme of 21 points on a  smooth fourfold admits generically non-reduced components \cite{JoachimQuestions} but the question about reducedness of the Hilbert schemes of points on smooth threefold is still open. 
Analogously, the question about reducedness was open in the case $\dim X=2$, cf.~\cite{JoachimQuestions,ALESSIONESTED}.  We address this problem successfully. In this direction we prove first that, when $\dim X\geqslant4 $ the nested Hilbert schemes become non-reduced as soon as possible. Then, we deduce that nested Hilbert schemes on smooth surfaces may have generically non-reduced components. It  is worth mentioning that we still do not know the minimal length of a nesting producing generically non-reduced components.

We list now our  main results on the topic. We adapt first the theory developed in \cite{ELEMENTARY} to the nested setting, then we deal with $\dim X\geqslant 4$ and finally we consider the surface case.


\begin{introtheorem}[\Cref{thm:tnt per nested}]\label{thmintro:tnt per nested}
Let $\mathbf{d}\in \Z^r$ be any non-decreasing sequence of non-negative integers and let $V\subset \Hilb^{\mathbf{d}} X$ be an irreducible component.  Suppose that $V$ is generically reduced. Then $V$ is elementary if and only if a general point of $V$ has trivial negative tangents.  
\end{introtheorem}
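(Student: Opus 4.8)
The plan is to follow the strategy of \cite{ELEMENTARY}, checking at each step that it survives the passage to nestings. First I would reduce to $X=\A^{n}$ (all the notions are \'etale-local on $X$) and let the group $G=\A^{n}\rtimes\mathbb{G}_{m}$ of translations and dilations act on $\Hilb^{\mathbf{d}}\A^{n}$ by $g\cdot(Z_{1}\subset\cdots\subset Z_{r})=(g(Z_{1})\subset\cdots\subset g(Z_{r}))$, which is again a nesting with the same Hilbert data. Since $G$ is connected and $V$ is an irreducible component, $g\cdot V$ is for every $g$ an irreducible component of the same dimension, equal to $V$ for $g$ the identity, so $V$ is $G$-stable; in particular $\mathbb{G}_{m}$ acts on $V$, and for $[Z_{\bullet}]\in V$ supported at the origin the limit $[Z_{\bullet}^{0}]:=\lim_{t\to 0}t\cdot[Z_{\bullet}]$ lies in the closed set $V$ and is $\mathbb{G}_{m}$-fixed, its ideals being the (homogeneous) leading-form ideals $\mathrm{LF}(I_{Z_{i}})$, of the same colength as the $I_{Z_{i}}$. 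The only genuinely new bookkeeping compared with \cite{ELEMENTARY} is to observe that $[Z_{\bullet}^{0}]$ is still a nesting: from $I_{Z_{i+1}}\subset I_{Z_{i}}$ one gets $\mathrm{LF}(I_{Z_{i+1}})\subset\mathrm{LF}(I_{Z_{i}})$, because the generators $\mathrm{LF}(f)$, $f\in I_{Z_{i+1}}$, of the former all lie in the latter.

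For the easy implication, suppose a general $[Z_{\bullet}]\in V$ has trivial negative tangents; if $\supp Z_{r}$ had $s\geqslant 2$ points, then, the parts of the nesting at distinct points lying in disjoint affine charts, $\Hilb^{\mathbf{d}}\A^{n}$ would split \'etale-locally near $[Z_{\bullet}]$ as a product of $s$ nested Hilbert schemes and the negative tangent space of \Cref{def:negativetangents} would split accordingly, each of the $s$ factors contributing its own $n$ translations, hence dimension at least $sn>n$ — a contradiction. So $Z_{r}$, and therefore every $Z_{i}\subset Z_{r}$, is a fat point and $V$ is elementary.

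For the converse I would argue as follows. Assuming $V$ elementary and generically reduced, at a general $[Z_{\bullet}]\in V$ the scheme $\Hilb^{\mathbf{d}}\A^{n}$ is smooth with $V$ its only local component, so $\dim_{\C}\mathsf{T}_{[Z_{\bullet}]}\Hilb^{\mathbf{d}}\A^{n}=\dim V$; translate $Z_{\bullet}$ to the origin and write $\Hilb^{\mathbf{d}}_{0}\A^{n}$ for the closed locus of origin-supported nestings and $V_{0}=V\cap\Hilb^{\mathbf{d}}_{0}\A^{n}$. Elementariness gives $V=\overline{\A^{n}\cdot V_{0}}$ with the orbit map $\A^{n}\times V_{0}\to\A^{n}\cdot V_{0}$ injective, so $\dim V=n+\dim V_{0}$, and $V_{0}$ is $\mathbb{G}_{m}$-stable and contains $[Z_{\bullet}^{0}]$. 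Since the $n$ coordinate translations meet $\mathsf{T}_{[Z_{\bullet}]}\Hilb^{\mathbf{d}}_{0}\A^{n}$ only in $0$, we have $\mathsf{T}_{[Z_{\bullet}]}\Hilb^{\mathbf{d}}\A^{n}\supseteq\mathsf{T}_{[Z_{\bullet}]}\Hilb^{\mathbf{d}}_{0}\A^{n}\oplus\C^{n}$, and comparing dimensions — using $\dim_{\C}\mathsf{T}_{[Z_{\bullet}]}\Hilb^{\mathbf{d}}\A^{n}=\dim V=n+\dim V_{0}$ and $\dim\mathsf{T}_{[Z_{\bullet}]}\Hilb^{\mathbf{d}}_{0}\A^{n}\geqslant\dim V_{0}$ — forces this to be an equality, so the span of the translations, which consists of weight $-1$ vectors, is a complement to the punctual tangent directions. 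Thus the negative tangent space is trivial precisely when $\mathsf{T}_{[Z_{\bullet}]}\Hilb^{\mathbf{d}}_{0}\A^{n}$ has no negative part in the associated graded of its $\mm$-adic filtration, and proving this is the point at which I would import the argument of \cite{ELEMENTARY}: combining that filtration with the Bia{\l}ynicki--Birula decomposition of $\Hilb^{\mathbf{d}}\A^{n}$ at the fixed point $[Z_{\bullet}^{0}]$ and the graded injection $\mathrm{gr}\,\mathsf{T}_{[Z_{\bullet}]}\Hilb^{\mathbf{d}}\A^{n}\hookrightarrow\mathsf{T}_{[Z_{\bullet}^{0}]}\Hilb^{\mathbf{d}}\A^{n}$.

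I expect this last step to be the main obstacle, since it is the only place the internal structure of the scheme enters and punctual deformations are not a priori of non-negative weight, so the Bia{\l}ynicki--Birula input is really needed. In \cite{ELEMENTARY} the estimates use the description $\mathsf{T}_{[Z]}\Hilb^{d}\A^{n}=\Hom_{R}(I,R/I)$, the behaviour of the $\mm$-adic filtration under leading-form degeneration, and the Bia{\l}ynicki--Birula decomposition of a possibly singular reducible scheme. For $\Hilb^{\mathbf{d}}\A^{n}$ the tangent space at $[Z_{\bullet}]$ is instead the space of first-order deformations of the flag $I_{Z_{r}}\subset\cdots\subset I_{Z_{1}}$, which one realises inside $\bigoplus_{i}\Hom_{R}(I_{Z_{i}},R/I_{Z_{i}})$ as the subspace cut out by compatibility with the inclusions; I would have to check that this presentation is compatible with the weight filtration, that the comparison map with $\mathsf{T}_{[Z_{\bullet}^{0}]}\Hilb^{\mathbf{d}}\A^{n}$ remains graded and injective, and that the Bia{\l}ynicki--Birula cells of the (reducible, singular) $\Hilb^{\mathbf{d}}\A^{n}$ are well-behaved enough — connectedness, and the dimension bound by the non-negative tangent space at the fixed point — for the counting to go through. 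Once these are in place, the remainder of the argument follows \cite{ELEMENTARY} essentially verbatim.
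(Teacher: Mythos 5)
Your proof takes essentially the same route as the paper's. The step you flag as ``the main obstacle'' --- that the tangent space to the punctual (Bia{\l}ynicki--Birula) locus coincides with the non-negative part of the tangent space --- is precisely \Cref{rem:nonneg-punctual}, which the paper states beforehand as the nested form of \cite[Theorem 4.11]{ELEMENTARY} and simply cites in its proof of \Cref{thm:tnt per nested}; invoking it closes your converse direction, whose dimension count ($\dim V=n+\dim V_0$, forcing $\mathsf T_{[Z]}\Hilb^{\mathbf d}\A^n=\mathsf T_{[Z]}\Hilb_0^{\mathbf d}\A^n\oplus\C^n$) is the same injectivity-plus-surjectivity argument the paper runs via \cite[Corollary 4.7]{ELEMENTARY} and generic smoothness. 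One caveat on the forward direction: \Cref{def:negativetangents} defines negative tangents only for fat nestings, so the hypothesis already forces a general point to be a fat nesting and elementariness follows from closedness of the punctual locus (this is how the paper argues); your contradiction via $s\geqslant 2$ support points presumes an extended definition in which each support point contributes its own $n$ translations to the negative tangent space, which the paper's filtration centred at a single point does not give --- away from the chosen point the condition $\varphi(\Cali_{Z_i}^{\geqslant k})\subset\Calo_{Z_i}^{\geqslant k}$ is vacuous --- though the conclusion you reach is of course correct. Finally, the translations at a non-homogeneous $[Z]$ are not literally weight $-1$ vectors (the tangent space carries only a filtration, not a grading), but this does not affect your count.
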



\begin{introtheorem}[\Cref{thm:main2}]\label{thmintro:main2}
    Let $X$ be a smooth quasi-projective variety and let $d\geqslant 2$ be a positive integer. Let $V\subset \Hilb^{d} X$ be a generically reduced elementary component. Then, the nested Hilbert scheme $\Hilb^{(1,d)} X$ has a generically non-reduced elementary component $\widetilde{V}$ such that
    \[
    \widetilde{V}_{\redu} \cong V_{\redu}.
    \]
\end{introtheorem}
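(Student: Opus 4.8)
The plan is to realise $\widetilde V$ explicitly as the closure of a tautological section of the forgetful morphism $\pi\colon\Hilb^{(1,d)}X\to\Hilb^{d}X$, $(Z_1\subset Z_2)\mapsto Z_2$, over $V$, and then to detect, at a general point, extra tangent directions to $\Hilb^{(1,d)}X$ beyond the dimension of $\widetilde V$.

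\emph{Construction of $\widetilde V$.} Since $V$ is elementary, in fact \emph{every} point of $V$ parametrises a fat point: the locus in $\Hilb^{d}X$ of length‑$d$ subschemes supported at a single point is closed, being the preimage of the small diagonal $X\hookrightarrow\Sym^{d}X$ under the Hilbert--Chow morphism, and it contains the generic point of the irreducible $V$. Hence the Hilbert--Chow morphism restricted to $V_{\redu}$ factors through $X$ and gives a support morphism $s\colon V_{\redu}\to X$, $[Z]\mapsto\supp(Z)$. As the reduced point $s([Z])$ is a length‑one closed subscheme of $Z$, the rule $[Z]\mapsto\bigl(s([Z])\subset Z\bigr)$ defines a morphism $\sigma\colon V_{\redu}\to\Hilb^{(1,d)}X$ which is a section of $\pi$ over $V_{\redu}\hookrightarrow\Hilb^{d}X$; being a section of a separated morphism, $\sigma$ is a closed immersion. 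Let $V^\circ=\Hilb^{d}X\smallsetminus\bigcup_{W\neq V}W$ be the complement of the other irreducible components: it is open in $\Hilb^{d}X$ and dense in $V$, and over $V^\circ$ every $Z$ is a fat point, hence has a unique length‑one subscheme. Therefore $\pi$ restricts to a homeomorphism $\pi^{-1}(V^\circ)\to V^\circ$ with set‑theoretic inverse $\sigma$, so $\pi^{-1}(V^\circ)$ is an open irreducible subset of $\Hilb^{(1,d)}X$ and its closure $\widetilde V$ is an irreducible component. Because $\sigma$ is a closed immersion with $\pi^{-1}(V^\circ)_{\redu}=\sigma(V^\circ_{\redu})$, passing to closures yields $\widetilde V_{\redu}=\sigma(V_{\redu})\cong V_{\redu}$; in particular $\dim\widetilde V=\dim V$. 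Finally, a general point of $\widetilde V$ is $\bigl(s([Z])\subset Z\bigr)$ with $[Z]$ general in $V$, i.e.\ a nesting of fat points, so $\widetilde V$ is elementary. Everything in the statement is thus proved except that $\widetilde V$ is generically non‑reduced.

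\emph{Generic non‑reducedness.} It remains to show that $\dim_{\C}\mathsf T_\xi\Hilb^{(1,d)}X>\dim V=\dim\widetilde V$ for $\xi=\sigma([Z])$ with $[Z]$ a general point of $V$. Set $p=s([Z])$, $R=\Calo_{X,p}$ with maximal ideal $\mm$, $I=I_Z\subseteq\mm$, $A=R/I$, and let $m$ be the embedding dimension of $Z$ at $p$; since $d\geqslant 2$ the point $Z$ is non‑reduced, so $1\leqslant m\leqslant n:=\dim X$. I would use the standard description of the tangent space to a nested Hilbert scheme: a tangent vector at $\xi$ is a pair $(\phi_1,\phi_2)\in\Hom_R(\mm,\C)\oplus\Hom_R(I,A)$ compatible along $I\hookrightarrow\mm$ and $A\twoheadrightarrow\C$. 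Choosing coordinates with $x_{m+1},\dots,x_n\in I$, any $\phi_1\in\Hom_R(\mm,\C)$ vanishing on $x_{m+1},\dots,x_n$ is compatible with $\phi_2=0$: indeed $\phi_1$ automatically kills $\mm^{2}\supseteq\mm I$, hence restricts to $0$ on $I$. Geometrically $(\phi_1,0)$ is the first‑order deformation of $\xi$ that slides the point $p$ inside the $m$‑dimensional tangent space to the span of $Z$ while leaving $Z$ untouched, and a routine Taylor expansion of the generators of $I$ shows this is a genuine deformation of the \emph{nesting}. These pairs span an $m$‑dimensional subspace $N\subseteq\mathsf T_\xi\Hilb^{(1,d)}X$ all of whose nonzero vectors have vanishing second component. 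On the other hand the restriction of $\pi$ is an isomorphism $\widetilde V_{\redu}\xrightarrow{\ \sim\ }V_{\redu}$, so $\mathsf T_\xi\widetilde V_{\redu}$ injects into $\mathsf T_{[Z]}\Hilb^{d}X$ via $(\phi_1,\phi_2)\mapsto\phi_2$, whence $N\cap\mathsf T_\xi\widetilde V_{\redu}=0$. Since $\xi$ is a general, hence smooth, point of $\widetilde V_{\redu}\cong V_{\redu}$,
\[
\dim_{\C}\mathsf T_\xi\Hilb^{(1,d)}X\ \geqslant\ \dim_{\C}\mathsf T_\xi\widetilde V_{\redu}+\dim_{\C}N\ =\ \dim V+m\ >\ \dim V=\dim\widetilde V,
\]
and therefore $\widetilde V$ is generically non‑reduced.

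The construction of $\widetilde V$ is essentially formal; the one place that requires care — and the expected main obstacle — is the tangent‑space comparison: correctly recording the compatibility condition for nested Hilbert schemes and recognising the ``extra'' negative tangent directions as motions of the length‑one point inside the span of $Z$, which is exactly where the hypothesis $d\geqslant 2$ (hence $m\geqslant 1$) is used. With a little more work one can pin the defect down exactly: differentiating the identities $\theta(t)\equiv 0$ along a curve of fat points $Z_t$ through $Z$ — the generators $\theta(t)\in I_{Z_t}$ vanish at the moving support point, while their linear parts at $p$ are controlled by the embedding dimension — shows that the image of $\Hom_R(I,A)\to\Hom_R(I,\C)$ is precisely the span of the duals of $x_{m+1},\dots,x_n$, so in fact $\dim_{\C}\mathsf T_\xi\Hilb^{(1,d)}X=\dim V+m$, the gap being exactly the embedding dimension of the generic point of $V$.
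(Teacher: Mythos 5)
Your argument is correct, and its engine is the same computation the paper uses: at a fat nesting $(p\subset Z)$ the pairs $(\phi_1,0)$, with $\phi_1\in\Hom_R(\mm,\C)$ vanishing on the linear forms of $I_Z$, satisfy the compatibility square trivially (both composites vanish, since $\phi_1$ kills $\mm^2$ together with the linear generators), and they lie in $\ker d\pi$ while $d\pi$ is injective on $\mathsf T_\xi\widetilde V_{\redu}$. The difference is purely in how non-reducedness is then extracted. The paper routes the conclusion through its nested TNT criterion (\Cref{thm:tnt per nested}): these extra vectors are negative tangents not reached by the translation map $\theta$, so the general point of the elementary component $\widetilde V$ fails TNT, which is impossible for a generically reduced elementary component. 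You instead compare $\dim_\C\mathsf T_\xi\Hilb^{(1,d)}X\geqslant\dim\widetilde V+m$ with $\dim\widetilde V$ directly and invoke generic smoothness of $\widetilde V_{\redu}$ in characteristic zero; this bypasses the Bia{\l}ynicki--Birula/TNT formalism entirely and even records a quantitative defect (at least the embedding dimension $m\geqslant1$ of the general $Z$), at the cost of not producing the reusable criterion that the paper applies again in \Cref{thm:solvereduced}. Your construction of $\widetilde V$ as the closed image of the section $\sigma$ is a more careful version of the paper's set-theoretic definition and correctly delivers elementarity and $\widetilde V_{\redu}\cong V_{\redu}$. Only cosmetic tightening is needed: the sentence about a \lq\lq routine Taylor expansion\rq\rq\ is superfluous, since commutativity of the square already certifies the tangent vector; the choice of coordinates with $x_{m+1},\dots,x_n\in I$ should be flagged as valid only after an étale or formal change of coordinates, exactly as the paper does; and your closing claim that the defect is exactly $m$ is plausible but not substantiated (nor needed).
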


\begin{introtheorem}[\Cref{thm:solvereduced}] \label{thmintro:solvereduced}
    Let $S $ be a smooth quasi-projective surface. Then, there exists a non-decreasing sequence of non-negative integers $\mathbf d\in\mathbb Z^r$ such that $S^{[\mathbf{d}]}$ has a generically non-reduced component. 
\end{introtheorem}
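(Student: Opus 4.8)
The plan is to bootstrap from the four-dimensional result (\Cref{thmintro:main2}) down to surfaces, using the standard fact that a Hilbert scheme of points on $\A^n$ sits inside a Hilbert scheme of points on $\A^{n+1}$ via the inclusion $\A^n \hookrightarrow \A^{n+1}$ as a coordinate hyperplane, or rather its nested analogue. Concretely, fix $n=2$ and recall that by \cite{iarrobino-reducibility,Iarrob}-type constructions there exist elementary, generically reduced components $V \subset \Hilb^d \A^4$ (Iarrobino--Emsalem points, or the components coming from almost-complete-intersection configurations); feeding such a $V$ into \Cref{thmintro:main2} produces a generically non-reduced elementary component $\widetilde V \subset \Hilb^{(1,d)}\A^4$ with $\widetilde V_{\redu} \cong V_{\redu}$. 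So the four-dimensional case of the theorem is already in hand; the work is entirely in descending to surfaces.

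The key step is a dimension-reduction trick for nested Hilbert schemes. First I would establish the local-to-global reduction: since non-reducedness is an open, local condition and every smooth surface is étale-locally $\A^2$, it suffices to treat $S = \A^2$, and more generally it suffices to produce, for some $\mathbf d$, a single point of $S^{[\mathbf d]}$ at which $S^{[\mathbf d]}$ is non-reduced and which lies on a component of the expected (too-small) dimension — equivalently, a point whose tangent space is strictly larger than the dimension of the component through it. Second, I would use the \emph{relative} version of the elementary-component machinery from \cite{ELEMENTARY} (adapted to the nested setting in \Cref{thmintro:tnt per nested}): a generically non-reduced elementary component $W \subset \Hilb^{\mathbf e}\A^m$ that is supported at fat points and fails TNT will, upon taking an $\Calo_{\A^m}$-flat family and restricting to a generic hyperplane section repeatedly, descend to a nesting configuration on $\A^2$. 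The mechanism is that a fat point $Z \subset \A^m$ defined by an ideal generated in a bounded socle degree can be \lq\lq spread out'' into a nesting $Z_1 \subset \dots \subset Z_r \subset \A^2$ whose associated tangent obstruction theory reproduces the obstruction responsible for non-reducedness upstairs — this is the same philosophy by which Iarrobino transferred pathologies from high-dimensional Hilbert schemes to surfaces by nesting.

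More precisely, the heart of the argument is a comparison of deformation theories. Writing $A = \Calo_{\A^m, x}/I$ for the fat point realising the pathology in dimension $m$, one knows $\dim_\C \mathsf T_{[A]}\Hilb \ne \dim V$. I would construct a length-graded nesting on $\A^2$ — taking $\mathbf d = (\dim_\C A/\mm^{k}, \dim_\C A/\mm^{k-1}, \dots)$ for the $\mm$-adic filtration, or a suitable lex-segment flattening — such that the tangent space to $S^{[\mathbf d]}$ at this nesting contains, as a direct summand, the offending $\Hom$/$\mathrm{Ext}$ group from upstairs, while the relevant component retains the \lq\lq wrong'' expected dimension. The parity/TNT criterion (\Cref{thmintro:tnt per nested}) then certifies that the component is elementary and generically non-reduced, or at the very least that $S^{[\mathbf d]}$ is non-reduced there. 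The main obstacle I anticipate is precisely this last comparison: showing that the nesting flattening does not \emph{accidentally} smooth out the obstruction — i.e. that the extra tangent vectors coming from the hyperplane-section/nesting procedure are genuinely obstructed and are not cancelled by new deformations of the nesting flags. Handling this will require a careful bookkeeping of the tangent-obstruction exact sequence for $\Hilb^{\mathbf d}$ (of the form $0 \to \Hom(I_r, \Calo/I_1) \to \bigoplus_i \Hom(I_i,\Calo/I_i) \to \dots$) and pinning down which terms survive under restriction to the hyperplane; I expect the degeneration has to be chosen with some care (perhaps a Gröbner or distraction degeneration) so that it is \emph{flat} and the special fibre is exactly the nesting one wants, with no jump in any of the $d_i$.
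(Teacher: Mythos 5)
Your proposal has a genuine gap at its central step, and the gap is one you yourself flag: the ``dimension-reduction trick'' that is supposed to carry the pathology from $\Hilb^{(1,d)}\A^4$ down to a nesting on $\A^2$ is never actually constructed, and there is no reason to expect it can be. A fat point realising the four-dimensional pathology (e.g.\ the Iarrobino--Emsalem point of \Cref{ex:finalenested}) has embedding dimension $4$, so it does not embed in a surface at all; ``restricting to a generic hyperplane section repeatedly'' destroys the scheme, and ``spreading out'' its $\mm$-adic filtration as a nesting $Z_1\subset\cdots\subset Z_r\subset\A^2$ is not a defined operation --- the $Z_i$ would have to be subschemes of $\A^2$, which the graded pieces of a $4$-dimensional Artinian algebra are not. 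Consequently the claimed identification of the offending $\Hom$ group upstairs with a direct summand of $\mathsf T_{[Z]}S^{[\mathbf d]}$ has no construction behind it, and the ``careful bookkeeping'' you defer to is not a technical refinement of an existing argument but the entire (missing) content of the proof.

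The paper's route is much shorter and sources the input from the surface itself rather than from $\A^4$: by the reducibility results of Ramkumar--Sammartano \cite{ALESSIONESTED}, some nested Hilbert scheme $S^{[\mathbf d]}$ on a surface already has an elementary component $V$. If $V$ is generically non-reduced one is done; otherwise one prepends the reduced support point to the nesting, i.e.\ passes to $\widetilde{\mathbf d}=(1,d_1,\ldots,d_r)$ and to the component $\widetilde V$ of flags $Z_0\subset Z_1\subset\cdots\subset Z_r$ with $[Z_1\subset\cdots\subset Z_r]\in V$ and $Z_0=\supp Z_r$. The proof of \Cref{thm:main2} (the generator ideals of a fat point of maximal embedding dimension have no linear part, so $\theta$ cannot surject onto the negative tangents of the augmented flag) applies verbatim on a surface and shows $\widetilde V$ is elementary and generically non-reduced via \Cref{thm:tnt per nested}. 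You correctly identified that \Cref{thmintro:main2} is the engine, but you aimed it at the wrong input: the trick needs an elementary component of a nested Hilbert scheme \emph{on the surface}, not a component transported from dimension four.
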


An interesting feature of \Cref{thmintro:main2} is that  this phenomenon  arises in many other contexts. For instance, in \cite{Sha90} some elementary components of the classical Hilbert scheme of points are compared with irreducible components of the moduli spaces parametrising the corresponding nilpotent algebras. In this setting, it is proven that the two objects are birational to each other as varieties but not as schemes. Another instance of the phenomenon in \Cref{thmintro:main2} occurs in the construction of the \textit{isospectral Hilbert scheme} where taking reduced structure is crucial in \cite{Hai}.

Finally, we also present an infinite family of elementary components of Hilbert schemes of points, including already known examples (cf.~\cite{Iarrob,ELEMENTARY,Satrianostaal}), as a byproduct of the following theorem.

\begin{introtheorem}[\Cref{prop:newcomp}]\label{thmintro:newcomp}
    Let $R=\C[x_1,\ldots,x_n,y_1,\ldots,y_n]$, for $n\geqslant 2$, be the polynomial ring in $2n$ variables and complex coefficients.
    Then, the ideal
    \[
    I=\sum_{i=1}^n (x_i,y_i)^2+ (x_1\cdots x_n -y_1\cdots y_n),\]
    has TNT. Therefore, by \cite[Theorem 4.5]{ELEMENTARY} every component of the Hilbert scheme $\Hilb^{3^n-1} \A^{2n}$ containing it is elementary.
\end{introtheorem}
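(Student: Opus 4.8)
The goal is to show that $R/I$ has the TNT property, i.e.\ the negative-degree tangents of the corresponding fat point (supported at the origin) are exhausted by the $2n$ coordinate translations. By the general machinery recalled in the excerpt (following \cite{ELEMENTARY}), the relevant tangent space is $\Hom_R(I,R/I)$, which carries a $\Z$-grading; TNT means that its strictly-negative graded pieces are $2n$-dimensional, spanned by $\partial/\partial x_i,\partial/\partial y_i$. The first step is thus to fix the grading, describe the standard monomial basis of $R/I$ explicitly, and identify a minimal generating set of $I$: the $2\binom{n}{1}+\binom{n}{2}\cdot\ldots$ — more precisely the generators $x_i^2, x_iy_i, y_i^2$ for each $i$, the products $x_iy_j, \; i\ne j$ are \emph{not} needed since $(x_i,y_i)^2+(x_j,y_j)^2$ already contains $x_ix_j$? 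No: one must be careful, $(x_i,y_i)^2$ contributes $x_i^2,x_iy_i,y_i^2$ only. So the minimal generators are $\{x_i^2,x_iy_i,y_i^2\}_{i=1}^n$ together with the binomial $f=x_1\cdots x_n-y_1\cdots y_n$, and one should record the $S$-polynomial/syzygy relations among them. A Gröbner basis computation (with respect to a suitable term order) confirms that the standard monomials are: $1$; the linear monomials $x_i,y_i$; for each subset $S\subseteq\{1,\dots,n\}$ with $|S|\geq 2$ the squarefree monomial $\prod_{i\in S}z_i$ with $z_i\in\{x_i,y_i\}$, modulo the single binomial relation when $S=\{1,\dots,n\}$. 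Counting gives $1+2n+(2^n\cdot\text{stuff})$; one checks this equals $3^n-1$, matching $\colen I$.

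**Core computation: bounding $\Hom_R(I,R/I)_{<0}$.**

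The heart of the argument is to show every homomorphism $\varphi\colon I\to R/I$ of negative degree is, up to the span of the coordinate derivations, zero. I would proceed generator by generator. A degree $-1$ homomorphism sends each quadric $x_i^2\mapsto$ (linear form in $R/I$), etc.; the Leibniz-type constraints coming from the syzygies $y_i\cdot x_i^2 - x_i\cdot x_iy_i=0$ and the syzygies involving $f$ force these linear forms to be precisely the ones produced by a linear combination of $\partial/\partial x_i,\partial/\partial y_i$. Degrees $\leq -2$: here $\varphi$ must send the degree-$2$ generators to (scalars, i.e. multiples of $1$) and $f$ (degree $n$) to something of degree $n-2$; the key is that the relations $x_i\cdot x_i^2 - x_i^2\cdot x_i = 0$ type constraints, combined with the fact that $R/I$ is concentrated in degrees $0,\dots,n$ and is ``thin'' in each degree, kill everything. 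The cleanest route is probably to use the explicit syzygies: for each $i$ we have $x_i\cdot(x_iy_i)-y_i\cdot(x_i^2)=0$ and $y_i\cdot(x_iy_i)-x_i\cdot(y_i^2)=0$, and for the binomial, $x_j\cdot f + (\text{correction})\in I$ relations coming from $x_j f = x_j x_1\cdots x_n - x_j y_1\cdots y_n$ where the first term lies in $(x_j^2)\subset I$; this pins down $\varphi(f)$ in terms of the $\varphi$ of the quadrics.

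**Expected main obstacle.**

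The routine but delicate part is the bookkeeping for the binomial generator $f=x_1\cdots x_n-y_1\cdots y_n$: its interaction with the quadratic generators produces syzygies in several degrees, and I expect the genuinely load-bearing step to be checking that a negative-degree $\varphi$ cannot ``hide'' a nonzero value on $f$ that is invisible on the quadrics. Concretely, one must verify that the image $\varphi(f)\in (R/I)_{n-2+\deg\varphi}$ is forced to vanish (or to be the derivation value) once $\varphi$ is determined on the $x_i^2,x_iy_i,y_i^2$, and this uses that multiplying $f$ by any variable lands in the ideal generated by the quadrics in a way compatible with $\varphi$ only if the ``extra'' term is zero — essentially because $x_i\cdot x_1\cdots x_n\in(x_i^2)$ but $x_i\cdot y_1\cdots y_n$ is a nonzero standard monomial when the product is squarefree, forcing cancellations. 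Organizing this into a clean finite check (perhaps by symmetry reducing to $n=2$ for the structural phenomenon and then running the induction on $n$, or simply presenting the Macaulay2-verifiable computation and then extracting the conceptual reason) will be where most of the work goes; the translation-only conclusion then follows, and the final sentence invoking \cite[Theorem 4.5]{ELEMENTARY} gives the elementary-component statement for $\Hilb^{3^n-1}\A^{2n}$ for free.
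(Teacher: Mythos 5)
Your proposal follows essentially the same route as the paper's proof: identify the minimal generators $x_i^2,x_iy_i,y_i^2,f$ and their explicit syzygies, describe the squarefree monomial basis of $R/I$ (with socle concentrated in degree $n$), decompose the tangent space by the $\mathbb G_m$-weight, kill the weight $\leqslant -2$ part via the quadric syzygies plus the socle condition on $\varphi(f)$, and pin down the weight $-1$ part as the $2n$ translations. The paper finishes the weight $-1$ step by an explicit dimension count ($6n$ constraints minus $4n$ independent syzygy conditions $=2n$) rather than by directly matching images to derivations, but this is a difference of bookkeeping, not of method.
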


\subsection*{Organisation of the content}  \Cref{sec:parity} concerns the parity conjecture for $\Quot$-schemes. In \Cref{subsec:originalexample} we explain our strategy to \lq\lq explore\rq\rq\  the $\Quot$-scheme by deforming monomial submodules. Then, in \Cref{subsec:counterexample} we prove the main result of this paper about $\Quot$-schemes, \Cref{thmintro:main} (\Cref{thm:main}) and in \Cref{subsec:explanation} we explain an alternative strategy based on the manipulation of the ideals given in \cite{GGGL}. In \Cref{subsec:bfunction} we prove in \Cref{prop:smoothableBeh} smoothability of the counterexample {to the constancy of the Behrend function} given in \cite{behrendnonconstant} and we give a new example for $\Quot_3^{13}\A^3$. In \Cref{sec:nested} we deal with nested Hilbert schemes. Precisely, in \Cref{subsec:TNT} we recall the theory of negative tangents by presenting it in the nested setting and we prove \Cref{thmintro:tnt per nested} (\Cref{thm:tnt per nested}). Then, in \Cref{subsec:nonredcomp} we  prove \Cref{thmintro:main2} (\Cref{thm:main2}), \Cref{thmintro:solvereduced} (\Cref{thm:solvereduced}), and we discuss one example. Finally, in \Cref{subsec:newelem} we give an infinite family of elementary components of the Hilbert schemes of points by proving \Cref{thmintro:newcomp} (\Cref{prop:newcomp}). 
 
\subsection*{Acknowledgments}
 
	We thank Joachim Jelisiejew for the encouragement to write this paper, and the help provided for \Cref{sec:nested}. We thank Rahul Pandharipande for the interesting conversation about the parity conjecture.
 We thank Andrea Ricolfi for his precious help and for the countless conversations on the Behrend function. We thank Matthew Satriano for very fruitful discussions. We thank Alessio Sammartano for the support he gave to the project from the very first moments.

	\section{New counterexamples to the parity conjecture}\label{sec:parity}
After having established the most convenient notation for us, in the first part of this section we explain a possible strategy to produce counterexamples to the party conjecture. Then, we prove \Cref{thmintro:main} of the introduction and we explain a possible different approach which also produces examples of $\Quot$-schemes on smooth threefold with non-constant Behrend function. Finally, we show the smoothability of the ideal given in \cite{behrendnonconstant}.
 
	Let $r,d\in\Z$ be two positive integers, and let $X$ be a smooth quasi-projective variety. Recall that the $(r,d)$-\textit{$\Quot$-functor} of $X$ denoted by $\underline{\Quot}_r^d X:\Sch^{\opp}_{\C} \to \Sets$, is the contravariant functor defined as follows
	\[
	\left(\underline{\Quot}_r^d X\right)(S) = \Set{ \pi_X^*\Calo_X^{\oplus r}\twoheadrightarrow \mathcal G |\mathcal G  \in  \Coh (X\times S)     \mbox{ is $S$-flat, $S$-finite, } \len_S\mathcal{G} =d }/\sim,
	\] 
	where $\pi_X:X\times S \rightarrow X$ denotes the canonical projection,  $\len_S\mathcal{G}$ denotes the relative length of $\mathcal{G}$ over $S$, and two surjections are isomorphic if they have the same kernel. By a celebrated result of Grothendieck, the functor $\underline{\Quot}_r^d X$ is representable  and the fine moduli space $\Quot_r^dX$ representing it is a quasi-projective scheme called \textit{Quot-scheme} \cite{FGA}. Whenever not needed, we will omit the surjection and we will denote points of the $\Quot$-scheme by $[\Calf]$. In this setting, if
 \[
 [0\rightarrow \Calk\rightarrow\Calo_X^{\oplus r}\rightarrow \Calf\rightarrow 0]\in\Quot_r^d X 
 \]
 is any point, there is a natural identification \cite[Theorem 6.4.9]{FGAexplained}
 \begin{equation}
 \label{eq:tangentquot}
     \mathsf T_{[\Calf]} \Quot_r^d X\cong \Hom_{\Calo_X}(\Calk , \Calf).
 \end{equation}
 \begin{remark}\label{rem:etale}
     As already anticipated in the introduction, since our questions are local in nature, it is fair to put $X\cong \A^3_\C$ and hence to work up to étale covers. We will denote by $R=\C[x,y,z]$ the polynomial ring in three variables and complex coefficients. Recall that given any $R$-module $M$, a submodule $N\subset M$, and an ideal $J\subset R$, the colon module is 
     \[
     (N:_MJ)=\Set{ m\in M| J\cdot m\subset N}.
     \]
     We omit the subscript $M$ from the notation above whenever no confusion is possible.
     If in addition $M$ is finite dimensional over $\mathbb C$, the colength of $N$ is the integer $\dim_\C (M/N)= \len (M/N)$, see \cite{EISENBUD} for more details.
 \end{remark}
 
 The following result was proven in \cite{SAMMARTANO}. It gives a positive answer to the Parity conjecture (\Cref{conjecture2}) in the homogeneous setting.
 \begin{theorem}[{\cite[Theorem 10]{SAMMARTANO}}]\label{thm:partyhomo}
     Let $\mathbf{k}$ denote an algebraically closed field of any characteristic and let $R = \mathbf{k}[x,y,z]$ be the polynomial ring in three variables. Let also $F$ be a free graded $R$-module of rank $r$ and let $M = F/K$ be a
graded $R$-module of finite length $d$. Then,
\[
\dim_\mathbf{k}\Hom_R(K, M) \equiv rd \pmod 2.
\] 
 \end{theorem}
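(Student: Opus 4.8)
The plan is to convert the $\Hom$ appearing in the statement into self-$\operatorname{Ext}$ groups of $M$, and then to exploit that $R$ is regular of Krull dimension $3$ — an \emph{odd} number. \emph{Step 1 (reduction).} Apply $\Hom_R(-,M)$ to $0\to K\to F\to M\to 0$. Since $F$ is free one has $\operatorname{Ext}^1_R(F,M)=0$, which gives the exact sequence
\[
0\longrightarrow\Hom_R(M,M)\longrightarrow\Hom_R(F,M)\longrightarrow\Hom_R(K,M)\longrightarrow\operatorname{Ext}^1_R(M,M)\longrightarrow 0.
\]
As $\dim_\mathbf{k}\Hom_R(F,M)=r\cdot\len M=rd$, this yields
\[
\dim_\mathbf{k}\Hom_R(K,M)=rd-\dim_\mathbf{k}\Hom_R(M,M)+\dim_\mathbf{k}\operatorname{Ext}^1_R(M,M),
\]
so the theorem is equivalent to the congruence $\dim_\mathbf{k}\Hom_R(M,M)\equiv\dim_\mathbf{k}\operatorname{Ext}^1_R(M,M)\pmod 2$ for every finite-length graded $R$-module $M$.

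\emph{Step 2 (duality).} Since $M$ has finite length over the $3$-dimensional regular graded ring $R$, graded local duality gives $\operatorname{Ext}^i_R(M,M)\cong\operatorname{Ext}^{3-i}_R(M,M)^{\vee}(3)$, hence $\dim_\mathbf{k}\operatorname{Ext}^i_R(M,M)=\dim_\mathbf{k}\operatorname{Ext}^{3-i}_R(M,M)$; moreover the support of $M$ is $0$-dimensional inside the threefold, so $\sum_i(-1)^i\dim_\mathbf{k}\operatorname{Ext}^i_R(M,M)=0$. Combining these, $\dim\Hom_R(M,M)=\dim\operatorname{Ext}^3_R(M,M)$ and $\dim\operatorname{Ext}^1_R(M,M)=\dim\operatorname{Ext}^2_R(M,M)$, and the congruence of Step 1 becomes the divisibility $4\mid\Sigma(M)$, where $\Sigma(M):=\sum_{i=0}^{3}\dim_\mathbf{k}\operatorname{Ext}^i_R(M,M)$.

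\emph{Step 3 (dévissage, then indecomposables).} Set $\Sigma(M,N):=\sum_{i=0}^3\dim_\mathbf{k}\operatorname{Ext}^i_R(M,N)$. It is additive for direct sums in each variable; it is symmetric, $\Sigma(M,N)=\Sigma(N,M)$, because Step 2 identifies $\dim\operatorname{Ext}^{\mathrm{ev}}_R(M,N)$ with $\dim\operatorname{Ext}^{\mathrm{odd}}_R(N,M)$ while vanishing of the Euler characteristic equates even and odd totals on each side; and for a short exact sequence $0\to M_1\to M\to M_2\to 0$ one has $\Sigma(M_1,N)+\Sigma(M,N)+\Sigma(M_2,N)\equiv 0\pmod 2$, since the long exact $\operatorname{Ext}$-sequence is a \emph{finite} exact sequence of finite-dimensional spaces and hence has even total dimension. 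As $\Sigma(\mathbf{k}(-a),\mathbf{k}(-b))=2^3$, dévissage on composition factors gives $\Sigma(M,N)\equiv 0\pmod 2$ for all $M,N$. Writing $M=\bigoplus_\alpha P_\alpha^{\oplus n_\alpha}$ with the $P_\alpha$ indecomposable, direct-sum additivity, symmetry and the now-available $2\mid\Sigma(P_\alpha,P_\beta)$ force $\Sigma(M)\equiv\sum_\alpha n_\alpha\,\Sigma(P_\alpha)\pmod 4$, so it suffices to prove $4\mid\Sigma(P)$ for $P$ indecomposable, i.e. $\dim_\mathbf{k}\operatorname{End}_R(P)\equiv\dim_\mathbf{k}\operatorname{Ext}^1_R(P,P)\pmod 2$. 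At this point one uses that $\operatorname{End}_R(P)$ is local with residue field $\mathbf{k}=\overline{\mathbf{k}}$, so $\dim_\mathbf{k}\operatorname{End}_R(P)=1+\dim_\mathbf{k}(\text{nilradical})$, and pins down the parity of $\dim_\mathbf{k}\operatorname{Ext}^1_R(P,P)$ through a direct inspection of the minimal graded free resolution of $P$, matched against the duality symmetry of Step 2.

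\emph{Main obstacle.} Everything up to "$2\mid\Sigma(M,N)$" is soft homological algebra. The hard part is the upgrade to $4\mid\Sigma(M)$, equivalently the parity identity $\dim\operatorname{End}_R(P)\equiv\dim\operatorname{Ext}^1_R(P,P)$ for indecomposable $P$: this is genuinely \emph{not} a Grothendieck-group statement, because the connecting maps in a non-split extension contribute nonzero corrections, so one must use the actual structure of the module (the local endomorphism ring of an indecomposable over an algebraically closed field, or a careful rank count for the differentials of the complex $\Hom_R(F_\bullet,M)$). I expect this indecomposable parity count — and not the formal bookkeeping — to be the place where all the work, and the hypothesis $\dim X=3$, really enter; and since the statement is asserted in every characteristic, the argument must be characteristic-free, which rules out diagonalising quadratic forms and makes the combinatorics of the resolution unavoidable.
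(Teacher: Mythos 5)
First, note that the paper does not prove this statement: it is quoted verbatim from Ramkumar--Sammartano \cite{SAMMARTANO}, so your proposal has to be measured against that source. Your Steps 1 and 2 are correct and coincide with the standard reduction used there: the four-term sequence gives $\dim_\mathbf{k}\Hom_R(K,M)=rd-e_0+e_1$ with $e_i=\dim_\mathbf{k}\operatorname{Ext}^i_R(M,M)$, graded local duality gives $e_0=e_3$, $e_1=e_2$, the Euler characteristic vanishes because $M$ is torsion, and the theorem becomes $4\mid\Sigma(M)$. Your bookkeeping in Step 3 reducing $4\mid\Sigma(M)$ to $4\mid\Sigma(P)$ for indecomposable summands (using $2\mid\Sigma(P_\alpha,P_\beta)$ and $n_\alpha^2\equiv n_\alpha \pmod 2$) is also formally sound.

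The gap is the final claim $\dim_\mathbf{k}\operatorname{End}_R(P)\equiv\dim_\mathbf{k}\operatorname{Ext}^1_R(P,P)\pmod 2$ for indecomposable $P$, which you do not prove, and for which the tools you name cannot suffice. Everything you invoke in Step 3 --- Krull--Schmidt, locality of $\operatorname{End}_R(P)$, d\'evissage, local duality, vanishing of $\chi$ --- works verbatim for finite-length modules over the \emph{local} ring $\mathbf{k}[x,y,z]_{(x,y,z)}$, where the conclusion is false: the counterexamples in \cite{GGGL} and in this paper are precisely finite-length modules with $\dim\Hom(K,M)\not\equiv rd$. So any completion of your argument must inject the grading in an essential, quantitative way, and "inspection of the minimal graded free resolution of $P$" is exactly the unproved content, made harder (not easier) by restricting to indecomposables, which are unclassifiable here. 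The actual proof in \cite{SAMMARTANO} never passes to indecomposables; it uses the \emph{internal} degree. Writing $e_{i,j}=\dim_\mathbf{k}\operatorname{Ext}^i_R(M,M)_j$, duality gives $e_{i,j}=e_{3-i,-j-3}$, and the involution $(i,j)\mapsto(3-i,-j-3)$ is fixed-point free because $j=-j-3$ has no integer solution; hence $\Sigma(M)=2A$ with $A=\sum_{i,\,j\geqslant-1}e_{i,j}$, and $A\equiv\sum_{j\geqslant-1}\chi_j\pmod 2$ where $\chi_j=\sum_i(-1)^ie_{i,j}$. The generating function identity
\[
\sum_j\chi_j\,t^j=-t^{-3}(1-t)^3\,H_M(t)\,H_M(t^{-1}),\qquad H_M(t)=\sum_a h_a t^a,
\]
then yields $\sum_{j\geqslant-1}\chi_j=2\bigl(\sum_a h_ah_{a+1}-\sum_a h_a^2\bigr)$, which is even, so $4\mid\Sigma(M)$. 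This Hilbert-series computation is where both the grading and the oddness of $3$ genuinely enter, and it is the step your proposal is missing.
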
 
As proven by the authors in \cite{GGGL}, \Cref{thm:partyhomo} does not generalise to the non-homogeneous setting for $d\geqslant12$. It is remarkable that the counterexamples given in \cite{GGGL} are valid in characteristic different from two, where the parity conjecture is still open.

\subsection{The behind-the-scenes at the original counterexample for Hilbert schemes}\label{subsec:originalexample}
We start with a brief analysis of the family of counterexamples described in \cite{GGGL}. The general member of this family has the following form
\begin{equation}
    \label{eq:generalcount}
   \left(x, (y,z)^2\right)^2 + \left(y^3 + b_1y^2z + b_2yz^2 + b_3z^3 + b_4 xy+ b_5xz\right) \subset R,
\end{equation} 
for a generic choice of complex parameters $b_i\in\C$, for $i=1,\ldots,5$. These ideals are obtained by deforming the monomial ideal
\begin{equation} \label{eq:monomialIdeal}
J = \left(x, (y,z)^2\right)^2 + \left(y^3\right)  = \left(x^2,xy^2, y^3, xyz,xz^2,y^2z^2,yz^3,z^4\right),
\end{equation}
by adding to the generator $y^3$ an element of $(J:(x,y,z))$, i.e.~a polynomial not contained in $J$ which enters $J$ if multiplied by any variable. More precisely, the family of ideals in \eqref{eq:generalcount} shows that there exists a vector subspace $V \subset \mathsf{T}_{[J]}\Hilb^{12} \mathbb{A}^3$ of dimension at least 5 of unobstructed first order deformations.

This observation suggests to look for new counterexamples to the parity conjecture as we explain here. As the $\Quot$-scheme can be covered with charts, each containing precisely one monomial submodule, we adopt the following strategy:
\begin{description}
    \item[Step 1] compute all the monomial submodules in $R^{\oplus r}$ of given colength $d$;
    \item[Step 2] for every monomial submodule try to determine a subspace of the tangent space of unobstructed first order deformations;
    \item[Step 3] for every subspace of unobstructed first order deformations consider a random vector and check whether the generic fiber of the corresponding $\A^1$-deformation gives a new counterexample.
\end{description}

Step 1 provides one point for each chart and allows to analyse all of them. In order to speed up the search, one can take into account the action of the projective linear group by considering only the Borel-fixed submodules \cite{ABRS}.\footnote[3]{Borel-fixed submodules can be efficiently computed via Borel-fixed ideals with the \textit{Macaulay2} \cite{M2} package \texttt{StronglyStableIdeals} described in \cite{AL}.}

Step 2 and Step 3 allow to \lq\lq get far\rq\rq~enough from monomial submodules  to obtain a  submodule  not homogeneous with respect to any grading, so that it does not satisfy the necessary conditions for the validity of the parity conjecture given in \cite{SAMMARTANO}.

\medskip   

Combining Step 1, 2 and 3 we obtain a computationally efficient procedure to select random points from the parameter space. This approach worked well for finding new counterexamples to the parity conjecture but in principle it can be used to investigate other properties.

\begin{example}
Applying this strategy, one can determine a lot of other counterexamples to the parity conjecture for the Hilbert scheme $\Hilb^d \mathbb{A}^3$. In \Cref{fig:tgtspaces Hilb}, we list pairs of values $(d,t)$ corresponding to new counterexamples $[Z] \subset \Hilb^d \mathbb{A}^3$ with $t = \dim_\C  \mathsf{T}_{[Z]} \Hilb^d \mathbb{A}^3 \not\equiv d \pmod{2}$ not coming from a counterexample in $\Hilb^{d-1} \mathbb{A}^3$.\footnote[4]{An explicit example of each case is available in the ancillary \textit{Macaulay2} \cite{M2} file \href{www.paololella.it/software/list-of-counterexamples-Hilbert-scheme.m2}{\tt list-of-counterexamples-Hilbert-scheme.m2}} Indeed, recall that if $[Z] \in \Hilb^d \mathbb{A}^3$ is a counterexample, then for every point $p \in \mathbb{A}^3$ not contained in the support of $Z$, the point $[Z \cup p] \in \Hilb^{d+1} \mathbb{A}^3$ is a counterexample, as $\dim_\C  \mathsf{T}_{[Z \cup p]} \Hilb^{d+1} \mathbb{A}^3 = \dim_\C  \mathsf{T}_{[Z]} \Hilb^{d} \mathbb{A}^3  + \dim_\C  \mathsf{T}_{[ p]} \Hilb^{1} \mathbb{A}^3 = \dim_\C  \mathsf{T}_{[Z]} \Hilb^{d} \mathbb{A}^3  + 3$.
\end{example}

\begin{table}[!ht]
\begin{tikzpicture}[xscale=0.925,yscale=0.9]
\draw (-1,0) -- (14,0);
\node at (-0.5,0.5) [] {$d$};  
\node at (-0.5,-0.6) [] {$t$};  

\node at (0.5,0.5) [] {$\leqslant 11$};
\node at (1.5,0.5) [] {$12$};
\node at (2.5,0.5) [] {$13$};
\node at (3.5,0.5) [] {$14$};
\node at (4.5,0.5) [] {$15$};
\node at (5.5,0.5) [] {$16$};
\node at (6.5,0.5) [] {$17$};
\node at (7.5,0.5) [] {$18$};
\node at (8.5,0.5) [] {$19$};
\node at (9.5,0.5) [] {$20$};
\node at (10.5,0.5) [] {$21$};
\node at (11.5,0.5) [] {$22$};
\node at (12.5,0.5) [] {$23$};
\node at (13.5,0.5) [] {$24$};

\draw (0,1) -- (0,-1.25);
\draw (1,1) -- (1,-1.25);
\draw (2,1) -- (2,-1.25);
\draw (3,1) -- (3,-1.25);
\draw (4,1) -- (4,-1.25);
\draw (5,1) -- (5,-1.25);
\draw (6,1) -- (6,-1.25);
\draw (7,1) -- (7,-1.25);
\draw (8,1) -- (8,-1.25);
\draw (9,1) -- (9,-1.25);
\draw (10,1) -- (10,-1.25);
\draw (11,1) -- (11,-1.25);
\draw (12,1) -- (12,-1.25);
\draw (13,1) -- (13,-1.25);

    \begin{scope}[shift={(0,-0.5)},black!70]
    \draw[-latex] (0.8,0.125) to[bend left=40]node[fill=white,inner sep=0.5pt,yshift=4pt]{\tiny ${+}3$} (1.2,0.125);
    \draw[-latex] (1.8,0.125) to[bend left=40]node[fill=white,inner sep=0.5pt,yshift=4pt]{\tiny ${+}3$} (2.2,0.125);
    \draw[-latex] (2.8,0.125) to[bend left=40]node[fill=white,inner sep=0.5pt,yshift=4pt]{\tiny ${+}3$} (3.2,0.125);
    \draw[-latex] (3.8,0.125) to[bend left=40]node[fill=white,inner sep=0.5pt,yshift=4pt]{\tiny ${+}3$} (4.2,0.125);
    \draw[-latex] (4.8,0.125) to[bend left=40]node[fill=white,inner sep=0.5pt,yshift=4pt]{\tiny ${+}3$} (5.2,0.125);
    \draw[-latex] (5.8,0.125) to[bend left=40]node[fill=white,inner sep=0.5pt,yshift=4pt]{\tiny ${+}3$} (6.2,0.125);
    \draw[-latex] (6.8,0.125) to[bend left=40]node[fill=white,inner sep=0.5pt,yshift=4pt]{\tiny ${+}3$} (7.2,0.125);
    \draw[-latex] (7.8,0.125) to[bend left=40]node[fill=white,inner sep=0.5pt,yshift=4pt]{\tiny ${+}3$} (8.2,0.125);
    \draw[-latex] (8.8,0.125) to[bend left=40]node[fill=white,inner sep=0.5pt,yshift=4pt]{\tiny ${+}3$} (9.2,0.125);
    \draw[-latex] (9.8,0.125) to[bend left=40]node[fill=white,inner sep=0.5pt,yshift=4pt]{\tiny ${+}3$} (10.2,0.125);
    \draw[-latex] (10.8,0.125) to[bend left=40]node[fill=white,inner sep=0.5pt,yshift=4pt]{\tiny ${+}3$} (11.2,0.125);
    \draw[-latex] (11.8,0.125) to[bend left=40]node[fill=white,inner sep=0.5pt,yshift=4pt]{\tiny ${+}3$} (12.2,0.125);
    \draw[-latex] (12.8,0.125) to[bend left=40]node[fill=white,inner sep=0.5pt,yshift=4pt]{\tiny ${+}3$} (13.2,0.125);
    \end{scope}

  \node at (0.5,-0.65) [black!50] {\small none};
  
  \node at (1.5,-0.65) [] {\small 45};

  \node at (4.5,-0.65) [] {\small 60};

  \node at (5.5,-0.6) [] {\small 55};
  \node at (5.5,-0.9) [] {\small 61};

  \node at (6.5,-0.65) [] {\small 74};

  \node at (7.5,-0.65) [] {\small 75};
  
  \node at (10.5,-0.65) [] {\small 90};

  \node at (11.5,-0.65) [] {\small 91};

  \node at (12.5,-0.65) [] {\small 98};

\node at (13.5,-0.6) [] {\small 103};
\node at (13.5,-0.9) [] {\small 121};

\end{tikzpicture}
\caption{Counterexamples found with the strategy in \Cref{subsec:originalexample} for $\Hilb^d \mathbb{A}^3$.}\label{fig:tgtspaces Hilb}
\end{table}

\subsection{New counterexamples for the Quot scheme}\label{subsec:counterexample}

We now introduce some notation and then state a criterion to produce many unobstructed first order deformations based only on the combinatorics of monomial ideals. These preliminary results hold for a polynomial ring with any number of variables. Thus, we start denoting by $R$ be the polynomial ring $\C[x_1,\ldots,x_n]$ in $n$ variables and complex coefficients and we will restrict to the case $n=3,\ R = \C[x,y,z]$ when dealing with the parity conjecture.

Let $J \subset S$ be a monomial ideal with finite colength. We denote by $\mathcal{B}_J\subset R$ the set of monomial generators of $J$ and by $\mathcal{N}_J\subset R$ the unique monomial basis of $R/J$, i.e.~the unique set of monomials whose image in $R/J$ consists of a basis. We will often abuse of notation and denote with the same symbol elements in $\mathcal{N}_J$ and their image in $R/J$.

A \emph{monomial submodule} $U\subset R^{\oplus r}$ is a submodule generated by \textit{monomial terms}, i.e.~elements of the form $m \mathbf{e}_i\in R^{\oplus r}$ where $m$ is a monomial in $R$ and $\mathbf{e}_i$ is any element of the canonical basis of $R^{\oplus r}$. Hence, every monomial submodule $U$ can be written as direct sum $\bigoplus_{i=1}^r J_i\mathbf{e}_i \subseteq \bigoplus_{i=1}^r R\mathbf{e}_i = R^{\oplus r}$ where $J_i$'s are monomial ideals in $R$.  We will only consider monomial submodules $U = \bigoplus_{i=1}^r J_i \mathbf{e}_i$ of finite colength, so that $\text{lenght}(R^{\oplus r}/U) = \sum_{i=1}^r \text{length}(R/J_i)$. 

As monomial ideals, a monomial submodule $U\subset R^{\oplus r}$ admits a unique minimal set of monomial generators, which we denote by $\mathcal B_U\subset R^{\oplus r}$. Similarly, we denote by $\mathcal N_U\subset R^{\oplus r}$  the unique monomial basis of $R^{\oplus r}/U$, i.e.~the unique minimal set of monomial terms whose image in $R^{\oplus r}/U$ constitutes a $\C$-basis. 

If we denote by $d$ the colength of the submodule $U$, then we can interpret it as a point
\[
[ 0 \to U \to R^{\oplus r} \to R^{\oplus r}/U \to 0] \in \Quot^d_r\mathbb{A}^n.
\]
As we have fixed the basis $\mathcal{N}_U$ for the quotient $R^{\oplus r}/U$, we can explicitly describe a tangent vector $\varphi \in \mathsf{T}_{[U]} \Quot^d_r\mathbb{A}^n = \Hom_R(U,R^{\oplus r}/U)$ by associating to every monomial generator of $U$ a linear combination of the elements in $\mathcal{N}_U$, i.e.~for every $b\in \mathcal{B}_U$
\begin{equation}\label{eq:maptg}
    \varphi(b) = \sum_{s \in \mathcal{N}_U} \gamma_{b,s} s,\qquad \gamma_{b,s} \in \mathbb{C}.
\end{equation}

We stress that the choice of the coefficients $\gamma_{b,s}$ is not arbitrary and it is subject to the conditions coming from the syzygies of $U$. Let  $R_\epsilon = \mathbb{C}[\epsilon][x_1,\ldots,x_n]$ and $\mathbb{C}[\epsilon] = \mathbb{C}[t]/(t^2)$ denotes the ring of dual numbers ($\epsilon$ is the equivalence class of $t$ in $\mathbb{C}[t]/(t^2)$). Given a tangent vector $\varphi\in\mathsf{T}_{[U]} \Quot^d_r\mathbb{A}^n$ presented as in \eqref{eq:maptg}, one can define a $\C[\epsilon]$-family of $R$-modules via the following submodule
\[
\left( b - \epsilon\cdot \varphi(b) \ \middle\vert\  b \in \mathcal{B}_U\right) \subset R_\epsilon^{\oplus r}.
\]

In what follows we shall need the following definition.

\begin{definition} 
Given a monomial submodule $U \subset R^{\oplus r}$ of finite colength, the \emph{socle} of $U$ is the set of monomial terms
\[
\mathcal{S}_U  = \mathcal{N}_U\cap \left(U:_{R^{\oplus r}}(x_1,\ldots,x_n)\right).
\]
\end{definition} 

Considering the decomposition $U = \bigoplus_{i=1}^r J_i\mathbf{e}_i$, we have that $\mathcal{S}_U = \bigcup_{i=1}^r \mathcal{S}_{J_i}\mathbf{e}_i$ where $\mathcal{S}_{J_i} = \mathcal{N}_{J_i} \cap \big(J_i :_R (x_1,\ldots,x_n)\big)$.

 \begin{remark} \label{rem:socleisflat}
  For any choice of $\gamma_{b,s}\in\C$, for $b\in\mathcal{B}_U$ and $s\in\mathcal{S}_U$, the  submodule
\[
\left( b - \epsilon\sum_{s \in \mathcal{S}_U} \gamma_{ b,s}\cdot s \ \middle\vert\  b \in \mathcal{B}_U\right) \subset R_\epsilon^{\oplus r}
\]
defines a first order deformation, i.e.~a $\C[\epsilon]$-family. This is true because the syzygies of $I$ do not impose conditions on the socle elements.
\end{remark}
 
 Recall that a first order deformation $\varphi$ is \emph{unobstructed}\footnote[5]{Notice that this is not the classical notion of unobstructed tangent vector. However, we adopt this terminology for the sake of exposition.} if the submodule generated by
\[
\left( b - t \sum_{s \in \mathcal{N}_U} \gamma_{b,s} \cdot s\ \middle\vert\  b \in \mathcal{B}_U\right) \subset R_t^{\oplus r}
\]
is flat over $\mathbb{A}^1 = \Spec \mathbb{C}[t]$, here $R_t$ denotes the polynomial ring $\C[t][x_1,\ldots,x_n]$.
 
\begin{lemma}\label{lem:unobstructed}
Let $U   \subseteq R^{\oplus r}$ be a monomial submodule of finite colength. Consider two subsets $B \subseteq \mathcal{B}_U$ and $S \subseteq \mathcal{S}_U$ such that 
\[
x_\ell \cdot s \notin B,\quad\mbox{ for all } s\in S,\mbox{ and }\ell=1,\ldots,n.
\]
Then, every function $\phi: \mathcal{B}_U \to \text{Span}_\C(\mathcal{N}_U)$ of the following form
\[
\phi(b) = \begin{cases}
    0 & \text{if~}b \notin B,\\
    \displaystyle\sum_{s \in S} \gamma_{b,s} \cdot s & \text{if~}b \in B,
\end{cases}
\]
induces an unobstructed first order deformation in $\mathsf{T}_{[U]} \Quot^d_r(\mathbb{A}^n)$.
\end{lemma}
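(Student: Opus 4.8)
The plan is to prove the stronger statement that the $\C[t]$-module $R_t^{\oplus r}/\widetilde{U}$, where $\widetilde{U}=\left(\,b-t\,\phi(b)\ \middle\vert\ b\in\mathcal{B}_U\,\right)\subseteq R_t^{\oplus r}$, is \emph{free} with basis (the classes of) the standard monomials $\mathcal{N}_U$; since freeness implies flatness over $\A^1=\Spec\C[t]$, this is exactly unobstructedness of $\phi$. That $\phi$ is a genuine tangent vector, i.e.\ lies in $\mathsf{T}_{[U]}\Quot^d_r\A^n$, is already \Cref{rem:socleisflat}, since $\phi$ takes values in $\Span_{\C}(\mathcal{S}_U)$. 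The argument is a Gröbner-basis computation carried out over the coefficient ring $\C[t]$; it behaves as over a field because all the leading coefficients that occur will be $1$.

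First I would fix a monomial order $\prec$ on $R_t^{\oplus r}$ obtained by refining a term order $<$ on the $R$-part with the $t$-degree as a tie-breaker; this is a genuine term order, hence a well-order, so normal-form reduction terminates. With an appropriate choice of $<$ the leading term of $g_b:=b-t\,\phi(b)$ is $b$ itself: the remaining terms are the $t\,s$ with $s\in S$, and since such an $s$ is a standard monomial while $b$ is a monomial generator, it suffices to have $b>s$ whenever $\gamma_{b,s}\neq 0$. Already here the hypothesis $x_\ell s\notin B$ plays a role, and it is indispensable in the reduction step below.

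Next I would check Buchberger's criterion for $G:=\{g_b\mid b\in\mathcal{B}_U\}$. Pairs of generators lying in different summands of $R_t^{\oplus r}$ give $S$-polynomials that reduce to zero trivially, so fix distinct minimal generators $b,b'$ of the same summand $J_i\mathbf{e}_i$ and put $L=\lcm(b,b')$. Then
\[
\tfrac{L}{b}\,g_b-\tfrac{L}{b'}\,g_{b'}=-\,t\Bigl(\tfrac{L}{b}\,\phi(b)-\tfrac{L}{b'}\,\phi(b')\Bigr).
\]
Since $b\neq b'$ are both minimal, $\tfrac{L}{b}$ and $\tfrac{L}{b'}$ are non-constant monomials; and because every $s\in S$ belongs to the socle, $x_j\,s\in U$ for every variable $x_j$, hence $\tfrac{L}{b}\,s\in U$ and likewise for $b'$. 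Therefore the $S$-polynomial equals $-t\,u$ with $u\in U$; as $U$ is a monomial submodule, $u$ is a $\C$-linear combination of monomial terms that all lie in $U$.

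The core of the proof is to reduce $-t\,u$ to $0$ modulo $G$, which I would do termwise. A monomial term $\kappa\,b''$ of $u$ (with $b''\in\mathcal{B}_U$ and $\kappa$ a monomial) is rewritten through $g_{b''}$ as $t^k\kappa\,b''\rightsquigarrow t^{k+1}\kappa\,\phi(b'')$; this vanishes if $b''\notin B$, equals $t^{k+1}\phi(b'')\in\C[t]\cdot\Span_{\C}(\mathcal{N}_U)$ (already in normal form) if $\kappa=1$, i.e.\ if $\kappa b''$ is itself a generator, and otherwise $\kappa\,\phi(b'')=\sum_{s}\gamma_{b'',s}\,\kappa\,s$ again lies in $U$ by the socle property, so the procedure continues. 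The main obstacle, where essentially all the combinatorics of $U$, $B$ and $S$ is used, is to show that this iteration terminates with remainder $0$: the condition $x_\ell s\notin B$ is precisely what prevents the rewriting from cycling back onto a deformed generator indefinitely. Granting this, $G$ is a Gröbner basis of $\widetilde{U}$ with respect to $\prec$, so $\operatorname{in}_\prec(\widetilde{U})$ is the monomial submodule generated by $\mathcal{B}_U$; hence $\mathcal{N}_U$ is a $\C[t]$-basis of $R_t^{\oplus r}/\widetilde{U}$, which is therefore free, in particular flat over $\A^1$, and $\phi$ is unobstructed.
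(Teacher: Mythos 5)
Your overall strategy (exhibit $\{b-t\phi(b)\}$ as a Gr\"obner basis over $\C[t]$ with initial module $U$, so that $\mathcal{N}_U$ stays a $\C[t]$-basis of the quotient and flatness is immediate) is a legitimate alternative to the paper's argument, which instead lifts the Koszul syzygies of $U$ explicitly via Artin's flatness criterion. But as written your proof has a genuine gap: the one step you label \lq\lq Granting this\rq\rq\ --- that every $S$-polynomial reduces to zero --- is exactly the mathematical content of the lemma and the only place the hypothesis $x_\ell\cdot s\notin B$ enters; asserting it is not proving it. Moreover you misidentify what could go wrong: with a global term order the reduction \emph{always} terminates (each step replaces a term by $\prec$-smaller ones), so \lq\lq cycling indefinitely\rq\rq\ is not the danger. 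The danger is a \emph{nonzero remainder}. Concretely, every intermediate term has the form $t^k\nu s$ with $\nu$ a non-constant monomial and $s\in S$; since $s$ is in the socle, $\nu s\in U$, so the term is reducible. If some minimal generator outside $B$ divides $\nu s$, the term dies. Otherwise $\nu s=\kappa b''$ with $b''\in B$, and the critical case is $\kappa=1$: then reduction produces $t^{k+1}\phi(b'')$, a combination of standard monomials, which is in normal form and makes the remainder nonzero, so Buchberger's criterion fails. The missing argument is that this case cannot occur: if $\nu s=b''$ is a minimal generator and $\nu$ is non-constant, pick any variable $x_\ell\mid\nu$; then $x_\ell s\in U$ divides $b''$, so minimality forces $b''=x_\ell s$, which the hypothesis excludes from $B$. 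Without spelling this out, the proof establishes nothing.

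A second, smaller gap: you assert that one can choose a term order on the $R$-part with $b>s$ for all pairs with $\gamma_{b,s}\neq 0$. This is needed both for $\mathrm{LT}(g_b)=b$ and for the reduction steps to decrease, but it is not automatic --- a socle element $s\in\mathcal{N}_U$ can have larger total degree than a generator $b\in B$, and you must produce a single weight vector satisfying finitely many linear inequalities simultaneously (or otherwise justify working with a $t$-adically local order). The paper's route avoids this entirely: it only needs the observation that for a generating syzygy $\tfrac{L}{b}b-\tfrac{L}{b'}b'$ the coefficients are non-constant monomials, so $\tfrac{L}{b}\phi(b)$ lies in the submodule generated by $\mathcal{B}_U\setminus B$ (by the same $x_\ell s$ argument as above), and the syzygy then lifts by correcting only the coefficients of the undeformed generators. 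I would encourage you either to complete the Gr\"obner argument along the lines above or to switch to the syzygy-lifting formulation, which sidesteps the choice of term order.
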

\begin{proof}
The map $\phi$ defines a first order deformation as per \Cref{rem:socleisflat}.
In order to prove the statement, we show that the submodule $M \subset R^{\oplus r}_t$ generated by
    \[
     (\mathcal{B}_U\setminus B) \cup \Set{b - t \phi(b)|b \in B}
    \]
    defines a flat family over the affine line $\mathbb{A}^1_\C = \Spec \mathbb{C}[t]$. This is the case if every relation among the generators of $U$ lifts to a relation among the generators of $M$ \cite[Section 1.3]{Artin}. Let 
    \begin{equation}\label{eq:monomial syzygy}
    \sum_{b \in \mathcal{B}_U} P_{b} \cdot b = \sum_{b \in \mathcal{B}_U\setminus B}    P_{b} \cdot b + \sum_{b \in B}   P_{b} \cdot b = 0,\qquad P_{b} \in R    
    \end{equation}
    be a syzygy among the generators of $U$. By definition of socle,   $S$ and $B$, if $b_0\in   B$ then $P_{b_0}\cdot \phi(b_0)$ belongs to the monomial submodule $M'\subset U$ generated by the terms in $\mathcal{B}_U\setminus B$, that is 
    \[
    P_{b_0}\cdot \phi(b_0) = \sum_{b \in \mathcal{B}_U\setminus B} Q_{b_0,b}\cdot b ,\qquad Q_{b_0,b} \in R,\ b\in B.
    \]
    As a consequence, we have
    \[
    \sum_{c \in B} P_{c}\cdot \phi(c) = \sum_{c \in B} \left( \sum_{b \in \mathcal{B}_U\setminus B} Q_{c,b}\cdot b\right) = \sum_{b \in \mathcal{B}_U\setminus B} \left( \sum_{c \in B} Q_{c,b} \right)\cdot b.
    \]
    Then, the syzygy \eqref{eq:monomial syzygy} lifts to
    \[
    \sum_{b \in \mathcal{B}_U\setminus B}   \widetilde{P}_{b} \cdot b + \sum_{b \in B}  P_{b}\cdot  \big(b - t \phi(b)\big) = 0 
    \]
    where, for $b\in \mathcal{B}_U\setminus B$, we put
    \[
    \widetilde{P}_{b} = P_{b} + t\left( \sum_{c \in B} Q_{c,b} \right). \qedhere
    \]
\end{proof}

We prove now the main theorem of this section, \Cref{thmintro:main} of the introduction. The explicit counterexample given in the proof is obtained following the strategy just described. In a similar way we found other examples that we briefly mention in \Cref{fig:tgtspaces}.

\begin{theorem}\label{thm:main}
    The parity conjecture (\Cref{conjecture2}) is false when $(r,d)$ runs in the following range:
    \begin{itemize}
        \item $r=1$ and $d\geqslant12$,
        \item $r\geqslant 2$ and $d\geqslant8$.
    \end{itemize}
\end{theorem}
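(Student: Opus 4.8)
The plan is to reduce the whole range to the single instance $\Quot_2^8\A^3$, and then to produce that instance by the method of \Cref{subsec:originalexample}. The case $r=1$, $d\geqslant 12$ is precisely \cite{GGGL}, so from now on $r\geqslant 2$, and I claim a single counterexample $[\Calf]\in\Quot_2^8\A^3$ suffices because counterexamples propagate under two elementary operations.

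\textbf{Reduction.} \emph{(i) Adding a point.} If $[\mathcal G]\in\Quot_r^e\A^3$ is a counterexample and $p\in\A^3$ lies off $\supp\mathcal G$, then $[\mathcal G\oplus\Calo_p]\in\Quot_r^{e+1}\A^3$, and by the étale-local product structure of the Quot scheme of points over configurations of disjoint points one gets $\mathsf T_{[\mathcal G\oplus\Calo_p]}\Quot_r^{e+1}\A^3\cong\mathsf T_{[\mathcal G]}\Quot_r^e\A^3\oplus\mathsf T_{[\Calo_p]}\Quot_r^1\A^3$. Since $\Calo_p$ is a translate of a graded length-$1$ module, \Cref{thm:partyhomo} gives $\dim_\C\mathsf T_{[\Calo_p]}\Quot_r^1\A^3\equiv r\pmod 2$; as $re$ also changes parity exactly when $r$ is odd, the parity defect is preserved. \emph{(ii) Raising the rank.} Embedding $\Quot_r^e\A^3\hookrightarrow\Quot_{r+1}^e\A^3$ by letting the extra generator of $\Calo^{r+1}$ map to $0$ replaces the kernel $K$ by $K\oplus\Calo$, so $\dim_\C\mathsf T$ grows by $\dim_\C\Hom_R(\Calo,\mathcal G)=e$, which is exactly the change in $re$; again the defect is preserved. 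Starting from $\Quot_2^8\A^3$, operation (i) yields all $(2,d)$ with $d\geqslant 8$, and then (ii) followed by (i) yields all $(r,d)$ with $r\geqslant 2$ and $d\geqslant 8$.

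\textbf{The base case.} It then remains to exhibit $[\Calf]\in\Quot_2^8\A^3$, say $0\to K\to R^{\oplus 2}\to\Calf\to 0$, with $\dim_\C\Hom_R(K,\Calf)$ odd (note $rd=16$ is even). Following \Cref{subsec:originalexample}, I would enumerate the finitely many monomial submodules $U=J_1\mathbf e_1\oplus J_2\mathbf e_2\subset R^{\oplus 2}$ with $\colen J_1+\colen J_2=8$ (up to the Borel action), and for a suitable such $U$ choose subsets $B\subseteq\mathcal B_U$ and $S\subseteq\mathcal S_U$ meeting the hypothesis of \Cref{lem:unobstructed}; a generic $\phi$ supported on $B$ with values in $\Span_\C(S)$ then defines an unobstructed family over $\A^1=\Spec\C[t]$, and its generic fibre $\Calf=R^{\oplus 2}/K$, with $K$ the specialisation of the deformed module at a generic $t$, has—for the right choice of $U$ and $\phi$—an odd-dimensional $\Hom_R(K,\Calf)$. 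I would then record an explicit such $U$, $\phi$ and $K$ and verify the $\Hom$-dimension with \cite{M2}; such an $\Calf$ is necessarily non-graded for any positive $\Z$-grading after a change of coordinates, as it would otherwise contradict \Cref{thm:partyhomo}.

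\textbf{The main obstacle.} The reduction is routine; the hard part will be the base case, namely locating among the monomial submodules of colength $8$ in $R^{\oplus 2}$ one carrying a deformation direction in the unobstructed subspace of \Cref{lem:unobstructed} whose generic fibre has odd $\dim_\C\Hom_R(K,\Calf)$—a finite but genuine search, with the final $\Hom$-computation on a non-monomial, non-graded module being the one non-combinatorial step. An alternative route, which I would pursue in \Cref{subsec:explanation}, starts instead from the rank-$1$ counterexample of \cite{GGGL} and manipulates its defining ideal to land directly in $\Quot_2^8\A^3$, sidestepping the search.
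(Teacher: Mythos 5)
Your reduction steps are correct and coincide with the paper's: the \lq\lq add a disjoint point\rq\rq\ operation (the paper realises it as the kernel $M\cap N$ with $N=(x,y,z)\oplus R^{\oplus(r-1)}$ and checks the splitting of $\Hom$ by localising at the two supports, which is the same as your appeal to the étale-local product structure, and the parity bookkeeping $\dim_\C\mathsf T_{[\Calo_p]}\Quot_r^1\A^3=r+2\equiv r\pmod 2$ matches), and the \lq\lq raise the rank\rq\rq\ operation (kernel becomes $K\oplus R$, tangent dimension grows by $d$) is verbatim the paper's. Your observation that a single counterexample in $\Quot_2^8\A^3$ generates the whole range $r\geqslant2$, $d\geqslant8$ is exactly how the paper organises the argument.

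The genuine gap is the base case, which is the entire mathematical content of the theorem for $r\geqslant 2$: you describe a search procedure but do not exhibit a point of $\Quot_2^8\A^3$ with odd-dimensional tangent space, so nothing is actually proven. \Cref{lem:unobstructed} only guarantees that certain first-order deformations integrate to flat $\A^1$-families; it says nothing about the parity of the tangent space at the generic fibre, so the existence of a suitable $(U,B,S,\phi)$ cannot be inferred from the lemma and must be certified by an explicit example. The paper supplies one: $U=J_1\mathbf{e}_1\oplus J_2\mathbf{e}_2$ with $J_1=(x,y^2,yz^2,z^3)$, $J_2=(x,y^2,yz,z^2)$, $B=\{x\,\mathbf{e}_2,y^2\,\mathbf{e}_2,z^2\,\mathbf{e}_2\}$, $S=\{yz\,\mathbf{e}_1,z^2\,\mathbf{e}_1\}$, $\phi(x\,\mathbf{e}_2)=\phi(z^2\,\mathbf{e}_2)=yz+z^2$, $\phi(y^2\,\mathbf{e}_2)=yz-z^2$, whose generic fibre $M$ has $\dim_\C\mathsf T_{[M]}\Quot_2^8\A^3=37$, verified by the ancillary \textit{Macaulay2} computation. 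Your proposed alternative (cutting down the rank-one counterexample of \cite{GGGL}) is also realised in the paper, in \Cref{subsec:explanation}, where the module $M=((x,y)+I)/I$ for $I=((x)+(y,z)^2)^2+(y^3-xz)$ yields a point of $\Quot_2^8\A^3$ with tangent dimension $39$; either explicit example would close the gap, but one of them must actually be produced and checked.
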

\begin{proof}
The case $r=1$ is the content of {\cite[Corollary 3.2]{GGGL}}. For $r \geqslant 2$, the first counterexample has length 8.

First, we describe how to use the strategy depicted in the previous subsection to obtain the counterexample. Second, we show how to produce a counterexample for $(r+1,d)$ and $(r,d+1)$ from a counterexample for $(r,d)$.

Consider the monomial submodule $U = J_1\mathbf{e}_1 \oplus J_2\mathbf{e}_2 \subseteq R^{\oplus 2}$ with
\[
J_1 = (x,y^2,yz^2,z^3) \qquad\text{and}\qquad J_2 = (x,y^2,yz,z^2).
\]
We have
\[
\mathcal{B}_U = \left\{x\,\mathbf{e}_1,y^2\,\mathbf{e}_1,yz^2\,\mathbf{e}_1, z^3\,\mathbf{e}_1\right\} \cup \left\{x\,\mathbf{e}_2,y^2\,\mathbf{e}_2,yz\,\mathbf{e}_2,z^2\,\mathbf{e}_2\right\}\quad\text{and}\quad \mathcal{S}_U = \left\{yz\, \mathbf{e}_1, z^2\, \mathbf{e}_1 \right\} \cup \left\{y\, \mathbf{e}_2,z\,\mathbf{e}_2\right\}.
\]
The pair $B = \{x\,\mathbf{e}_2,y^2\,\mathbf{e}_2,z^2\,\mathbf{e}_2\}$ and $S = \{yz\, \mathbf{e}_1, z^2\, \mathbf{e}_1\}$ satisfies the hypothesis of \Cref{lem:unobstructed}. Consider the function $\phi:\mathcal{B}_U \to \text{Span}_\C(\mathcal{N}_U)$ 
\[
\phi(x\,\mathbf{e}_1) = \phi(y^2\,\mathbf{e}_1) = \phi(yz^2\,\mathbf{e}_1) = \phi( z^3\,\mathbf{e}_1) = \phi(yz\,\mathbf{e}_2) = 0,\quad \phi(x\mathbf{e}_2) = \phi(z^2\mathbf{e}_2) = yz+z^2,\quad \phi(y^2\mathbf{e}_2) = yz-z^2
\]
and let $M_t \subset R_t^{\oplus r}$ be the associated deformation of $U$ generated by
\[
\left\{ x\,\mathbf{e}_1, y^2\,\mathbf{e}_1,yz^2\,\mathbf{e}_1, z^3\,\mathbf{e}_1\right\} \cup \left\{x\,\mathbf{e}_2 + t(yz+z^2)\mathbf{e}_1,y^2\,\mathbf{e}_2 +  t(yz-z^2)\mathbf{e}_1,yz\,\mathbf{e}_2,z^2\,\mathbf{e}_2 + t(yz+z^2)\mathbf{e}_1\right\}.
\]
The generic fiber $M$ defines a point $[ 0 \to M \to R^{\oplus 2} \to R^{\oplus 2}/M \to 0] \in \Quot^8_2 \A^3$ with tangent space of dimension $37 \not \equiv 2\cdot8 \pmod 2$.\footnote[6]{See the ancillary \textit{Macaulay2} \cite{M2} file \href{www.paololella.it/software/first-counterexample-Quot-scheme.m2}{\tt first-counterexample-Quot-scheme.m2} for the computation of $\dim_\C \mathsf{T}_{[M]} \Quot^8_2\A^3$.}
 
\smallskip

\underline{Increase the rank.} Let $M \subset R^{\oplus r}$ be a submodule such that $[0 \to M \to R^{\oplus r} \to R^{\oplus r}/M \to 0] \in \Quot^d_r \A^3$ is a counterexample to the parity conjecture. The submodule $M \oplus R \subset R^{\oplus r+1}$ defines a point 
\[
[0 \to M\oplus R \to R^{\oplus r+1} \to R^{\oplus r+1}/(M\oplus R) \simeq R^{\oplus r}/M  \to 0] \in \Quot^d_{r+1} \A^3
\]
with tangent space
\[
\begin{split}
&\Hom_R\big(M\oplus R, R^{\oplus r+1}/(M\oplus R)\big)\simeq \Hom_R(M\oplus R, R^{\oplus r}/M) \\
&\qquad\simeq \Hom_R(M,R^{\oplus r}/M)\oplus \Hom_R(R ,R^{\oplus r}/M)\simeq \Hom_R(M,R^{\oplus r}/M)\oplus (R^{\oplus r}/M)
\end{split}
\]
of dimension
\[
\dim_\C  \mathsf{T}_{[M\oplus R]} \Quot_{r+1}^{d}\A^3 = \dim_\C  \mathsf{T}_{[M]} \Quot_{r}^{d}\A^3 + \dim_\C  (R^{\oplus r}/M) = \dim_\C  \mathsf{T}_{[M]} \Quot_{r}^{d}\A^3 + d.
\]
Then, $\dim_\C  \mathsf{T}_{[M]} \Quot_{r}^{d}\A^3 \not\equiv rd \pmod 2$ implies $\dim_\C  \mathsf{T}_{[M]} \Quot_{r}^{d}\A^3 + d \not\equiv (r+1)d \pmod 2$.

\smallskip

\underline{Increase the length.} Let $M \subset R^{\oplus r}$ be a submodule such that $[0 \to M \to R^{\oplus r} \to R^{\oplus r}/M \to 0] \in \Quot^d_r \A^3$ is a counterexample to the parity conjecture and assume that the origin does not belong to the support of $R^{\oplus r}/M$. Moreover, consider the submodule $N = (x,y,z) \oplus R^{\oplus (r-1)} \subset R^{\oplus r}$ defining the point $[0 \to N \to R^{\oplus r} \to R^{\oplus r}/N \to 0] \in \Quot^1_r \A^3$. The module $R^{\oplus r}/N \simeq R/(x,y,z)$ is supported at the origin so that
\[
\Ann(R^{\oplus r}/M)+\Ann(R^{\oplus r}/N)=R,
\]
where $\Ann (R^{\oplus r}/M)$ and $\Ann (R^{\oplus r}/N)$ denote the annihilator ideals of $R^{\oplus r}/M$ and $R^{\oplus r}/N$. In particular, one can write $1 = m+n \in R$ for some $m\in \Ann (R^{\oplus r}/M)$ and $n\in\Ann (R^{\oplus r}/N)$.

Let us denote by $\pi_M: R^{\oplus r}\to R^{\oplus r}/M$ and $\pi_N: R^{\oplus r}\to R^{\oplus r}/N$ the projections to the quotient modules. Then, the following composition
\[
\begin{tikzcd}[row sep=tiny, column sep=huge]
    R^{\oplus r}   \arrow[r,"\Delta"] & R^{\oplus r} \oplus R^{\oplus r} \arrow[r,"(\pi_M{,}\pi_N)"] & (R^{\oplus r}/M) \oplus (R^{\oplus r}/N) \\
    \mathbf{e}_i  \arrow[r,mapsto] & (\mathbf{e}_i,\mathbf{e}_i) \arrow[r,mapsto] & \big(\pi_M(\mathbf{e}_i),\pi_N(\mathbf{e}_i)\big)
\end{tikzcd}
\]
 is still a surjection. Indeed
\[
(1-m)\cdot (\pi_M,\pi_N)\circ \Delta(\mathbf{e}_i)=(\pi_M (\mathbf{e}_i),0)\quad\text{and}\quad (1-n)\cdot (\pi_M,\pi_N)\circ \Delta(\mathbf{e}_i)=(0,\pi_N(\mathbf{e}_i)).
\]
Thus, the kernel $K=\ker ((\pi_M,\pi_N)\circ \Delta)=\ker \pi_M\cap\ker\pi_N = M \cap N$ defines a point $[ 0 \to K \to R^{\oplus r} \to R^{\oplus r}/K \simeq (R^{\oplus r}/M) \oplus (R^{\oplus r}/N) \to 0 ]\in\Quot_{r}^{d+1} \A^3$. The tangent space is
\[
\Hom_R(K,(R^{\oplus r}/M) \oplus (R^{\oplus r}/N)) \cong\Hom_R(K,R^{\oplus r}/M)\oplus \Hom_R(K, R^{\oplus r}/N)\cong\Hom_R(M,R^{\oplus r}/M)\oplus \Hom_R(N,R^{\oplus r}/N)
\]
(the second equality can be checked by localising at the points in the support of $R^{\oplus r}/M$ and $R^{\oplus r}/N$) and its dimension is
\[
\dim_{\C}\mathsf T_{[K]} \Quot_r^{d+1}\A^3 = \dim_{\C}\mathsf T_{[M]} \Quot_r^{d}\A^3 + \dim_{\C}\mathsf T_{[N]} \Quot_r^{1}\A^3 = \dim_{\C}\mathsf T_{[M]} \Quot_r^{d}\A^3 + r + 2.
\]
Finally, $\dim_{\C}\mathsf T_{[M]} \Quot_r^{d}\A^3 \not\equiv rd \pmod 2$ implies $\dim_{\C}\mathsf T_{[M]} \Quot_r^{d}\A^3 + r+2 \not\equiv r(d+1) \pmod 2$.
\end{proof}

\Cref{fig:tgtspaces} encodes the dimension of the tangent spaces of other counterexamples to the parity conjecture for rank $1\leqslant r \leqslant 4$ and degree $8\leqslant d\leqslant14$ that are not obtained from counterexamples of rank $r-1$ and degree $d$ or rank $r$ and degree $d-1$ via the procedure described in the proof of \Cref{thm:main}.\footnote[7]{An explicit example of each case is available in the ancillary \textit{Macaulay2} \cite{M2} file \href{www.paololella.it/software/list-of-counterexamples-Quot-scheme.m2}{\tt list-of-counterexamples-Quot-scheme.m2}.}

\begin{table}[H]
\begin{tikzpicture}[xscale=1.75,yscale=-1]
   \draw (-1,0) -- (7,0); 
   \draw (-1,1) -- (7,1); 
   \draw (-1,2) -- (7,2); 
   \draw (-1,3) -- (7,3); 

   \draw (0,-1) -- (0,4);
   \draw (1,-1) -- (1,4);
   \draw (2,-1) -- (2,4);
   \draw (3,-1) -- (3,4);
   \draw (4,-1) -- (4,4);
   \draw (5,-1) -- (5,4);
   \draw (6,-1) -- (6,4);

   \node at (-0.5,0.5) [] {$r=1$}; 
   \node at (-0.5,1.5) [] {$r=2$}; 
   \node at (-0.5,2.5) [] {$r=3$}; 
   \node at (-0.5,3.5) [] {$r=4$}; 

   \node at (0.5,-0.5) [] {$d=8$};
   \node at (1.5,-0.5) [] {$d=9$};
   \node at (2.5,-0.5) [] {$d=10$};
   \node at (3.5,-0.5) [] {$d=11$};
   \node at (4.5,-0.5) [] {$d=12$};
   \node at (5.5,-0.5) [] {$d=13$};
   \node at (6.5,-0.5) [] {$d=14$};

    \begin{scope}[black!80]
   \draw[-latex] (0.125,0.75) to[bend left=20]node[fill=white,inner sep=0.5pt,xshift=4pt]{\tiny ${+}8$} (0.14,1.225);
   \draw[-latex] (0.125,1.75) to[bend left=20]node[fill=white,inner sep=0.5pt,xshift=4pt]{\tiny ${+}8$} (0.14,2.225);
   \draw[-latex] (0.125,2.75) to[bend left=20]node[fill=white,inner sep=0.5pt,xshift=4pt]{\tiny ${+}8$} (0.14,3.225);        
    \end{scope}
    
    \begin{scope}[shift={(1,0)},black!80]
   \draw[-latex] (0.125,0.75) to[bend left=20]node[fill=white,inner sep=0.5pt,xshift=4pt]{\tiny ${+}9$} (0.14,1.225);
   \draw[-latex] (0.125,1.75) to[bend left=20]node[fill=white,inner sep=0.5pt,xshift=4pt]{\tiny ${+}9$} (0.14,2.225);
   \draw[-latex] (0.125,2.75) to[bend left=20]node[fill=white,inner sep=0.5pt,xshift=4pt]{\tiny ${+}9$} (0.14,3.225);        
    \end{scope}

    \begin{scope}[shift={(2,0)},black!80]
   \draw[-latex] (0.125,0.75) to[bend left=20]node[fill=white,inner sep=0.5pt,xshift=5.5pt]{\tiny ${+}10$} (0.14,1.225);
   \draw[-latex] (0.125,1.75) to[bend left=20]node[fill=white,inner sep=0.5pt,xshift=5.5pt]{\tiny ${+}10$} (0.14,2.225);
   \draw[-latex] (0.125,2.75) to[bend left=20]node[fill=white,inner sep=0.5pt,xshift=5.5pt]{\tiny ${+}10$} (0.14,3.225);        
    \end{scope}

    \begin{scope}[shift={(3,0)},black!80]
   \draw[-latex] (0.125,0.75) to[bend left=20]node[fill=white,inner sep=0.5pt,xshift=5.5pt]{\tiny ${+}11$} (0.14,1.225);
   \draw[-latex] (0.125,1.75) to[bend left=20]node[fill=white,inner sep=0.5pt,xshift=5.5pt]{\tiny ${+}11$} (0.14,2.225);
   \draw[-latex] (0.125,2.75) to[bend left=20]node[fill=white,inner sep=0.5pt,xshift=5.5pt]{\tiny ${+}11$} (0.14,3.225);        
    \end{scope}

    \begin{scope}[shift={(4,0)},black!80]
   \draw[-latex] (0.125,0.75) to[bend left=20]node[fill=white,inner sep=0.5pt,xshift=5.5pt]{\tiny ${+}12$} (0.14,1.225);
   \draw[-latex] (0.125,1.75) to[bend left=20]node[fill=white,inner sep=0.5pt,xshift=5.5pt]{\tiny ${+}12$} (0.14,2.225);
   \draw[-latex] (0.125,2.75) to[bend left=20]node[fill=white,inner sep=0.5pt,xshift=5.5pt]{\tiny ${+}12$} (0.14,3.225);        
    \end{scope}

    \begin{scope}[shift={(5,0)},black!80]
   \draw[-latex] (0.125,0.75) to[bend left=20]node[fill=white,inner sep=0.5pt,xshift=5.5pt]{\tiny ${+}13$} (0.14,1.225);
   \draw[-latex] (0.125,1.75) to[bend left=20]node[fill=white,inner sep=0.5pt,xshift=5.5pt]{\tiny ${+}13$} (0.14,2.225);
   \draw[-latex] (0.125,2.75) to[bend left=20]node[fill=white,inner sep=0.5pt,xshift=5.5pt]{\tiny ${+}13$} (0.14,3.225);        
    \end{scope}

    \begin{scope}[shift={(6,0)},black!80]
   \draw[-latex] (0.125,0.75) to[bend left=20]node[fill=white,inner sep=0.5pt,xshift=5.5pt]{\tiny ${+}14$} (0.14,1.225);
   \draw[-latex] (0.125,1.75) to[bend left=20]node[fill=white,inner sep=0.5pt,xshift=5.5pt]{\tiny ${+}14$} (0.14,2.225);
   \draw[-latex] (0.125,2.75) to[bend left=20]node[fill=white,inner sep=0.5pt,xshift=5.5pt]{\tiny ${+}14$} (0.14,3.225);        
    \end{scope}

    \begin{scope}[shift={(0,0.15)},black!80]
    \draw[-latex] (0.85,0.125) to[bend right=40]node[fill=white,inner sep=0.5pt,yshift=4pt]{\tiny ${+}3$} (1.125,0.15);
    \draw[-latex] (1.85,0.125) to[bend right=40]node[fill=white,inner sep=0.5pt,yshift=4pt]{\tiny ${+}3$} (2.125,0.15);
    \draw[-latex] (2.85,0.125) to[bend right=40]node[fill=white,inner sep=0.5pt,yshift=4pt]{\tiny ${+}3$} (3.125,0.15);
    \draw[-latex] (3.85,0.125) to[bend right=40]node[fill=white,inner sep=0.5pt,yshift=4pt]{\tiny ${+}3$} (4.125,0.15);
    \draw[-latex] (4.85,0.125) to[bend right=40]node[fill=white,inner sep=0.5pt,yshift=4pt]{\tiny ${+}3$} (5.125,0.15);
    \draw[-latex] (5.85,0.125) to[bend right=40]node[fill=white,inner sep=0.5pt,yshift=4pt]{\tiny ${+}3$} (6.125,0.15);
    \end{scope}

    \begin{scope}[shift={(0,1.15)},black!80]
    \draw[-latex] (0.85,0.125) to[bend right=40]node[fill=white,inner sep=0.5pt,yshift=4pt]{\tiny ${+}4$} (1.125,0.15);
    \draw[-latex] (1.85,0.125) to[bend right=40]node[fill=white,inner sep=0.5pt,yshift=4pt]{\tiny ${+}4$} (2.125,0.15);
    \draw[-latex] (2.85,0.125) to[bend right=40]node[fill=white,inner sep=0.5pt,yshift=4pt]{\tiny ${+}4$} (3.125,0.15);
    \draw[-latex] (3.85,0.125) to[bend right=40]node[fill=white,inner sep=0.5pt,yshift=4pt]{\tiny ${+}4$} (4.125,0.15);
    \draw[-latex] (4.85,0.125) to[bend right=40]node[fill=white,inner sep=0.5pt,yshift=4pt]{\tiny ${+}4$} (5.125,0.15);
    \draw[-latex] (5.85,0.125) to[bend right=40]node[fill=white,inner sep=0.5pt,yshift=4pt]{\tiny ${+}4$} (6.125,0.15);
    \end{scope}

        \begin{scope}[shift={(0,2.15)},black!80]
    \draw[-latex] (0.85,0.125) to[bend right=40]node[fill=white,inner sep=0.5pt,yshift=4pt]{\tiny ${+}5$} (1.125,0.15);
    \draw[-latex] (1.85,0.125) to[bend right=40]node[fill=white,inner sep=0.5pt,yshift=4pt]{\tiny ${+}5$} (2.125,0.15);
    \draw[-latex] (2.85,0.125) to[bend right=40]node[fill=white,inner sep=0.5pt,yshift=4pt]{\tiny ${+}5$} (3.125,0.15);
    \draw[-latex] (3.85,0.125) to[bend right=40]node[fill=white,inner sep=0.5pt,yshift=4pt]{\tiny ${+}5$} (4.125,0.15);
    \draw[-latex] (4.85,0.125) to[bend right=40]node[fill=white,inner sep=0.5pt,yshift=4pt]{\tiny ${+}5$} (5.125,0.15);
    \draw[-latex] (5.85,0.125) to[bend right=40]node[fill=white,inner sep=0.5pt,yshift=4pt]{\tiny ${+}5$} (6.125,0.15);
    \end{scope}

        \begin{scope}[shift={(0,3.15)},black!80]
    \draw[-latex] (0.85,0.125) to[bend right=40]node[fill=white,inner sep=0.5pt,yshift=4pt]{\tiny ${+}6$} (1.125,0.15);
    \draw[-latex] (1.85,0.125) to[bend right=40]node[fill=white,inner sep=0.5pt,yshift=4pt]{\tiny ${+}6$} (2.125,0.15);
    \draw[-latex] (2.85,0.125) to[bend right=40]node[fill=white,inner sep=0.5pt,yshift=4pt]{\tiny ${+}6$} (3.125,0.15);
    \draw[-latex] (3.85,0.125) to[bend right=40]node[fill=white,inner sep=0.5pt,yshift=4pt]{\tiny ${+}6$} (4.125,0.15);
    \draw[-latex] (4.85,0.125) to[bend right=40]node[fill=white,inner sep=0.5pt,yshift=4pt]{\tiny ${+}6$} (5.125,0.15);
    \draw[-latex] (5.85,0.125) to[bend right=40]node[fill=white,inner sep=0.5pt,yshift=4pt]{\tiny ${+}6$} (6.125,0.15);
    \end{scope}

    \node at (0.5,1.55) [] {\small 37,39};
    \node at (1.5,1.55) [] {\small 47};
\node at (2.5,1.55) [] {\small 55};
\node at (4.5,1.55) [] {\small 61,65};
\node at (5.5,1.55) [] {\small 71,75};
\node at (6.5,1.55) [] {\small 77,81,83,87};

    \node at (0.5,0.55) [black!50] {\small none};
    \node at (1.5,0.55) [black!50] {\small none};
    \node at (2.5,0.55) [black!50] {\small none};
    \node at (3.5,0.55) [black!50] {\small none};
    \node at (4.5,0.55) [] {\small 45};

    \node at (2.5,2.55) [] {\small 71};
\node at (3.5,2.55) [] {\small 74,78};
\node at (4.5,2.55) [] {\small 85,91,93};
\node at (5.5,2.55) [] {\small 92,100};

\node at (6.5,2.55) [] {\small 99,107,109};

\node at (4.5,3.55) [] {\small 111,115};

\node at (5.5,3.55) [] {\small 119,125};

\node at (6.5,3.55) [] {\small 129,133,139};

\end{tikzpicture}
\caption{Other counterexamples found following the strategy in \Cref{subsec:originalexample} for $\Quot^d_r \mathbb{A}^3$.}\label{fig:tgtspaces}
\end{table}

\subsection{An alternative strategy}\label{subsec:explanation}
In this subsection we provide an alternative strategy to construct counterexamples to the parity conjecture starting from the one given in \cite{GGGL}. Also this strategy considers monomial ideals and produces deformations of monomial submodules, but it allows to preserve along the process additional properties  such as homogeneity with respect some non-standard grading, symmetries\ldots 

We illustrate the strategy starting with the deformation 
\begin{equation}\label{eq:counterexample-binomial}
I=\left((x)+(y,z)^2\right)^2+(y^3-xz) \subset R.
\end{equation}
of the monomial ideal $J$ in \eqref{eq:monomialIdeal}. We can depict the Artinian local algebra $R/I$ as a plane partition with two socle boxes identified by the binomial generator.

\begin{figure}[H]
    \centering
    \begin{tikzpicture}[xscale=0.3,yscale=0.36]
        \draw[,black,fill=red] (0,0) -- (0,3) -- (2,2) -- (2,1) -- (3,0.5) -- (3,-0.5) -- (4,-1) -- (4,-2) -- cycle; 
        \draw[,black] (1,-0.5) -- (1,2.5);
        \draw[,black] (2,-1) -- (2,1);
        \draw[,black] (3,-1.5) -- (3,-0.5);
        \draw[,black] (0,1) -- (3,-0.5);
        \draw[,black] (0,2) -- (2,1);

        \draw [,black,fill=green] (0,3) -- (0,4) -- (1,3.5) -- (1,2.5) -- cycle; 
        \draw [,black,fill=yellow] (0,4) -- (1,3.5) -- (2,4) -- (1,4.5) -- cycle; 
        \draw [,black,fill=yellow,xshift=1cm,yshift=-1.5cm] (0,4) -- (1,3.5) -- (2,4) -- (1,4.5) -- cycle; 
        \draw [,black,fill=yellow,xshift=2cm,yshift=-3cm] (0,4) -- (1,3.5) -- (2,4) -- (1,4.5) -- cycle; 
        \draw [,black,fill=yellow,xshift=3cm,yshift=-4.5cm] (0,4) -- (1,3.5) -- (2,4) -- (1,4.5) -- cycle; 
    
        \draw [,black,fill=blue] (1,3.5) -- (2,4) -- (2,3) -- (1,2.5) -- cycle;     
        \draw [,black,fill=blue,xshift=1cm,yshift=-1.5cm] (1,3.5) -- (2,4) -- (2,3) -- (1,2.5) -- cycle;  
        \draw [,black,fill=blue,xshift=2cm,yshift=-3cm] (1,3.5) -- (2,4) -- (2,3) -- (1,2.5) -- cycle;  
        \draw [,black,fill=blue,xshift=3cm,yshift=-4.5cm] (1,3.5) -- (2,4) -- (2,3) -- (1,2.5) -- cycle;  

        \begin{scope}[shift={(-1,-0.5)}]
            \draw [,black,fill=red] (0,0) -- (0,2) -- (1,1.5) -- (1,-0.5) -- cycle;
            \draw[,black] (0,1) -- (1,0.5);
             \draw [,black,fill=green] (1,-0.5) -- (1,0.5) -- (2,0) -- (2,-1) -- cycle;
             \draw [,black,fill=yellow] (0,2) -- (1,2.5) -- (2,2) -- (1,1.5) -- cycle;
             \draw [,black,fill=yellow,xshift=1cm,yshift=-1.5cm] (0,2) -- (1,2.5) -- (2,2) -- (1,1.5) -- cycle;
             \draw [,black,fill=blue] (1,1.5) -- (2,2) -- (2,1) -- (1,0.5) -- cycle; 
             \draw [,black,fill=blue,xshift=1cm,yshift=-1.5cm] (1,1.5) -- (2,2) -- (2,1) -- (1,0.5) -- cycle; 
        \end{scope}

        \begin{scope}[shift={(14,0)}]
        \draw[,black,fill=red] (0,0) -- (0,3) -- (2,2) -- (2,1) -- (3,0.5) -- (3,-0.5) -- (4,-1) -- (4,-2) -- cycle; 
        \draw[,black] (1,-0.5) -- (1,2.5);
        \draw[,black] (2,-1) -- (2,1);
        \draw[,black] (3,-1.5) -- (3,-0.5);
        \draw[,black] (0,1) -- (3,-0.5);
        \draw[,black] (0,2) -- (2,1);

        \draw [,black,fill=green] (0,3) -- (0,4) -- (1,3.5) -- (1,2.5) -- cycle; 
        \draw [,black,fill=yellow] (0,4) -- (1,3.5) -- (2,4) -- (1,4.5) -- cycle; 
        \draw [,black,fill=yellow,xshift=1cm,yshift=-1.5cm] (0,4) -- (1,3.5) -- (2,4) -- (1,4.5) -- cycle; 
        \draw [,black,fill=yellow,xshift=2cm,yshift=-3cm] (0,4) -- (1,3.5) -- (2,4) -- (1,4.5) -- cycle; 
        \draw [,black,fill=yellow,xshift=3cm,yshift=-4.5cm] (0,4) -- (1,3.5) -- (2,4) -- (1,4.5) -- cycle; 
    
        \draw [,black,fill=blue] (1,3.5) -- (2,4) -- (2,3) -- (1,2.5) -- cycle;     
        \draw [,black,fill=blue,xshift=1cm,yshift=-1.5cm] (1,3.5) -- (2,4) -- (2,3) -- (1,2.5) -- cycle;  
        \draw [,black,fill=blue,xshift=2cm,yshift=-3cm] (1,3.5) -- (2,4) -- (2,3) -- (1,2.5) -- cycle;  
        \draw [,black,fill=blue,xshift=3cm,yshift=-4.5cm] (1,3.5) -- (2,4) -- (2,3) -- (1,2.5) -- cycle;  
        \end{scope}

        \node at (8,1.5) [] {$\sim$};
        \begin{scope}[shift={(11.5,-1.25)}]
            \draw [,black,fill=red] (0,0) -- (0,2) -- (1,1.5) -- (1,-0.5) -- cycle;
            \draw[,black] (0,1) -- (1,0.5);
             \draw [,black,fill=green] (1,-0.5) -- (1,0.5) -- (2,0) -- (2,-1) -- cycle;
             \draw [,black,fill=yellow] (0,2) -- (1,2.5) -- (2,2) -- (1,1.5) -- cycle;
             \draw [,black,fill=yellow,xshift=1cm,yshift=-1.5cm] (0,2) -- (1,2.5) -- (2,2) -- (1,1.5) -- cycle;
             \draw [,black,fill=blue] (1,1.5) -- (2,2) -- (2,1) -- (1,0.5) -- cycle; 
             \draw [,black,fill=blue,xshift=1cm,yshift=-1.5cm] (1,1.5) -- (2,2) -- (2,1) -- (1,0.5) -- cycle; 

             \draw [densely dotted,thick] (1,2.5) -- (2.5,3.25);
             \draw [densely dotted,thick,xshift=1cm,yshift=-0.5cm] (1,2.5) -- (2.5,3.25);
             \draw [densely dotted,thick,xshift=1cm,yshift=-1.5cm](1,2.5) -- (2.5,3.25);
             \draw [densely dotted,thick,xshift=2cm,yshift=-2cm](1,2.5) -- (2.5,3.25);
             \draw [densely dotted,thick,xshift=2cm,yshift=-3cm] (1,2.5) -- (2.5,3.25);
        \end{scope}
    \end{tikzpicture}

    \caption{ Pictorial description of the algebra $R/I$. The socle boxes identified by the binomial generator in \eqref{eq:counterexample-binomial} are the green ones.}\label{fig:counterHilb}
\end{figure}
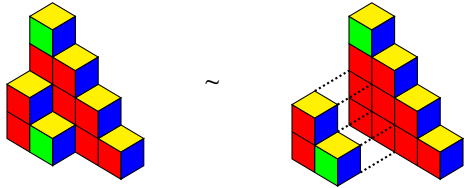

Consider the $R$-module
    \[
    M=\frac{(x,y)+I}{I} 
    \]
corresponds to the point
\[
 [0 \to K \to R^{\oplus 2} \to M \to 0] \in \Quot^8_2 \mathbb{A}^3
\]
where 
\[
K = (x\,\mathbf{e}_1,y^2\,\mathbf{e}_1,yz\,\mathbf{e}_1,z^2\,\mathbf{e}_1,x\,\mathbf{e}_2-y\,\mathbf{e}_1,y^2\,\mathbf{e}_2-z\,\mathbf{e}_1,yz^2\,\mathbf{e}_2,z^3\,\mathbf{e}_2)\subset R^{\oplus 2}.
\]
The dimension of the tangent space turns out to be
\[
\dim_\C \mathsf{T}_{[M]} \Quot^8_2 \mathbb{A}^3 = 39 \not\equiv 2\cdot 8 \pmod 2.
\]
Notice that $K$ is a deformation of the monomial submodule $U = (x,y^2,yz,z^2)\mathbf{e}_1\oplus (x,y^2,yz^2,z^3)\mathbf{e}_2$ and $y\,\mathbf{e}_1,z\,\mathbf{e}_1 \in \mathcal{S}_U$.
 
From a combinatorial viewpoint, the module $M$ is obtained by \textit{\lq\lq removing the four boxes corresponding to the monomials in $\mathcal{N}_J \cap \mathcal{N}_{(x,y)}$ from the partition in \Cref{fig:counterHilb}\rq\rq}. This is explained  in \Cref{fig:square-of-maximal-ideal}.

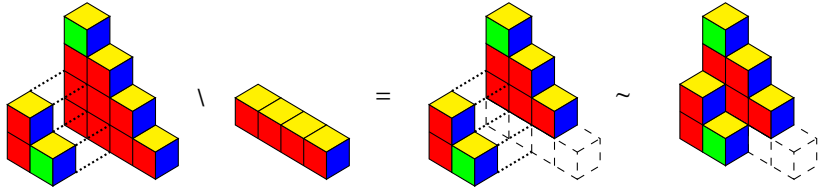
\begin{figure}[!ht]
\centering
\begin{tikzpicture}[xscale=0.3,yscale=0.36]
        \begin{scope}[shift={(0,0)}]
        \draw[,black,fill=red] (0,0) -- (0,3) -- (2,2) -- (2,1) -- (3,0.5) -- (3,-0.5) -- (4,-1) -- (4,-2) -- cycle; 
        \draw[,black] (1,-0.5) -- (1,2.5);
        \draw[,black] (2,-1) -- (2,1);
        \draw[,black] (3,-1.5) -- (3,-0.5);
        \draw[,black] (0,1) -- (3,-0.5);
        \draw[,black] (0,2) -- (2,1);

        \draw [,black,fill=green] (0,3) -- (0,4) -- (1,3.5) -- (1,2.5) -- cycle; 
        \draw [,black,fill=yellow] (0,4) -- (1,3.5) -- (2,4) -- (1,4.5) -- cycle; 
        \draw [,black,fill=yellow,xshift=1cm,yshift=-1.5cm] (0,4) -- (1,3.5) -- (2,4) -- (1,4.5) -- cycle; 
        \draw [,black,fill=yellow,xshift=2cm,yshift=-3cm] (0,4) -- (1,3.5) -- (2,4) -- (1,4.5) -- cycle; 
        \draw [,black,fill=yellow,xshift=3cm,yshift=-4.5cm] (0,4) -- (1,3.5) -- (2,4) -- (1,4.5) -- cycle; 
    
        \draw [,black,fill=blue] (1,3.5) -- (2,4) -- (2,3) -- (1,2.5) -- cycle;     
        \draw [,black,fill=blue,xshift=1cm,yshift=-1.5cm] (1,3.5) -- (2,4) -- (2,3) -- (1,2.5) -- cycle;  
        \draw [,black,fill=blue,xshift=2cm,yshift=-3cm] (1,3.5) -- (2,4) -- (2,3) -- (1,2.5) -- cycle;  
        \draw [,black,fill=blue,xshift=3cm,yshift=-4.5cm] (1,3.5) -- (2,4) -- (2,3) -- (1,2.5) -- cycle;  
        \end{scope}

        \begin{scope}[shift={(-2.5,-1.25)}]
            \draw [,black,fill=red] (0,0) -- (0,2) -- (1,1.5) -- (1,-0.5) -- cycle;
            \draw[,black] (0,1) -- (1,0.5);
             \draw [,black,fill=green] (1,-0.5) -- (1,0.5) -- (2,0) -- (2,-1) -- cycle;
             \draw [,black,fill=yellow] (0,2) -- (1,2.5) -- (2,2) -- (1,1.5) -- cycle;
             \draw [,black,fill=yellow,xshift=1cm,yshift=-1.5cm] (0,2) -- (1,2.5) -- (2,2) -- (1,1.5) -- cycle;
             \draw [,black,fill=blue] (1,1.5) -- (2,2) -- (2,1) -- (1,0.5) -- cycle; 
             \draw [,black,fill=blue,xshift=1cm,yshift=-1.5cm] (1,1.5) -- (2,2) -- (2,1) -- (1,0.5) -- cycle; 

             \draw [densely dotted,thick] (1,2.5) -- (2.5,3.25);
             \draw [densely dotted,thick,xshift=1cm,yshift=-0.5cm] (1,2.5) -- (2.5,3.25);
             \draw [densely dotted,thick,xshift=1cm,yshift=-1.5cm](1,2.5) -- (2.5,3.25);
             \draw [densely dotted,thick,xshift=2cm,yshift=-2cm](1,2.5) -- (2.5,3.25);
             \draw [densely dotted,thick,xshift=2cm,yshift=-3cm] (1,2.5) -- (2.5,3.25);
        \end{scope}

        \node at (6,1) [] {$\setminus$};

        \begin{scope}[shift={(7.5,0)}]
        \draw [,black,fill=red] (0,0) -- (0,1) -- (4,-1) -- (4,-2) -- cycle;
        \draw [black] (1,-0.5) -- (1,0.5);
        \draw [black] (2,-1) -- (2,0);
        \draw [black] (3,-1.5) -- (3,-0.5);
         \draw [,black,fill=yellow] (0,1) -- (1,0.5) -- (2,1) -- (1,1.5) -- cycle;
         \draw [,black,fill=yellow,xshift=1cm,yshift=-.5cm] (0,1) -- (1,0.5) -- (2,1) -- (1,1.5) -- cycle;
         \draw [,black,fill=yellow,xshift=2cm,yshift=-1cm] (0,1) -- (1,0.5) -- (2,1) -- (1,1.5) -- cycle;
         \draw [,black,fill=yellow,xshift=3cm,yshift=-1.5cm] (0,1) -- (1,0.5) -- (2,1) -- (1,1.5) -- cycle;

         \draw [,black,fill=blue] (4,-1) -- (4,-2) -- (5,-1.5) -- (5,-0.5) -- cycle;
        \end{scope}

        \node at (14,1) [] {$=$};

        \begin{scope}[shift={(18.5,0)}]
        \draw[,black,fill=red] (0,1) -- (0,3) -- (2,2) -- (2,1) -- (3,0.5) -- (3,-0.5) -- cycle; 
        \draw[,black] (1,0.5) -- (1,2.5);
        \draw[,black] (2,0) -- (2,1);
        \draw[,black] (0,1) -- (3,-0.5);
        \draw[,black] (0,2) -- (2,1);

\draw [thin, dashed] (0,0) --(0,1) -- (4,-1) -- (4,-2) -- cycle;
	\draw [thin, dashed] (1,-0.5) -- (1,0.5);
	\draw [thin, dashed] (2,-1) -- (2,0.);
	\draw [thin, dashed] (3,-1.5) -- (3,-0.5);
	\draw [thin, dashed] (4,-2)  -- (5,-1.5) -- (5,-0.5) -- (4,0);
	\draw [thin, dashed] (4,-1) -- (5,-0.5); 
	
        \draw [,black,fill=green] (0,3) -- (0,4) -- (1,3.5) -- (1,2.5) -- cycle; 
        \draw [,black,fill=yellow] (0,4) -- (1,3.5) -- (2,4) -- (1,4.5) -- cycle; 
        \draw [,black,fill=yellow,xshift=1cm,yshift=-1.5cm] (0,4) -- (1,3.5) -- (2,4) -- (1,4.5) -- cycle; 
        \draw [,black,fill=yellow,xshift=2cm,yshift=-3cm] (0,4) -- (1,3.5) -- (2,4) -- (1,4.5) -- cycle; 
    
        \draw [,black,fill=blue] (1,3.5) -- (2,4) -- (2,3) -- (1,2.5) -- cycle;     
        \draw [,black,fill=blue,xshift=1cm,yshift=-1.5cm] (1,3.5) -- (2,4) -- (2,3) -- (1,2.5) -- cycle;  
        \draw [,black,fill=blue,xshift=2cm,yshift=-3cm] (1,3.5) -- (2,4) -- (2,3) -- (1,2.5) -- cycle;  
        \end{scope}

        \begin{scope}[shift={(16.,-1.25)}]
            \draw [,black,fill=red] (0,0) -- (0,2) -- (1,1.5) -- (1,-0.5) -- cycle;
            \draw[,black] (0,1) -- (1,0.5);
             \draw [,black,fill=green] (1,-0.5) -- (1,0.5) -- (2,0) -- (2,-1) -- cycle;
             \draw [,black,fill=yellow] (0,2) -- (1,2.5) -- (2,2) -- (1,1.5) -- cycle;
             \draw [,black,fill=yellow,xshift=1cm,yshift=-1.5cm] (0,2) -- (1,2.5) -- (2,2) -- (1,1.5) -- cycle;
             \draw [,black,fill=blue] (1,1.5) -- (2,2) -- (2,1) -- (1,0.5) -- cycle; 
             \draw [,black,fill=blue,xshift=1cm,yshift=-1.5cm] (1,1.5) -- (2,2) -- (2,1) -- (1,0.5) -- cycle; 

             \draw [densely dotted,thick] (1,2.5) -- (2.5,3.25);
             \draw [densely dotted,thick,xshift=1cm,yshift=-0.5cm] (1,2.5) -- (2.5,3.25);
             \draw [densely dotted,thick,xshift=1cm,yshift=-1.5cm](1,2.5) -- (2.5,3.25);
             \draw [densely dotted,thick,xshift=2cm,yshift=-2cm](1,2.5) -- (2.5,3.25);
             \draw [densely dotted,thick,xshift=2cm,yshift=-3cm] (1,2.5) -- (2.5,3.25);
        \end{scope}

        \node at (24.5,1) [] {$\sim$};

        \begin{scope}[shift={(28,0)}]
        \draw[,black,fill=red] (0,1) -- (0,3) -- (2,2) -- (2,1) -- (3,0.5) -- (3,-0.5) -- cycle; 
        \draw[,black] (1,0.5) -- (1,2.5);
        \draw[,black] (2,0) -- (2,1);
        \draw[,black] (0,1) -- (3,-0.5);
        \draw[,black] (0,2) -- (2,1);

\draw [thin, dashed] (0,0) --(0,1) -- (4,-1) -- (4,-2) -- cycle;
	\draw [thin, dashed] (1,-0.5) -- (1,0.5);
	\draw [thin, dashed] (2,-1) -- (2,0.);
	\draw [thin, dashed] (3,-1.5) -- (3,-0.5);
	\draw [thin, dashed] (4,-2)  -- (5,-1.5) -- (5,-0.5) -- (4,0);
	\draw [thin, dashed] (4,-1) -- (5,-0.5);
	 
        \draw [,black,fill=green] (0,3) -- (0,4) -- (1,3.5) -- (1,2.5) -- cycle; 
        \draw [,black,fill=yellow] (0,4) -- (1,3.5) -- (2,4) -- (1,4.5) -- cycle; 
        \draw [,black,fill=yellow,xshift=1cm,yshift=-1.5cm] (0,4) -- (1,3.5) -- (2,4) -- (1,4.5) -- cycle; 
        \draw [,black,fill=yellow,xshift=2cm,yshift=-3cm] (0,4) -- (1,3.5) -- (2,4) -- (1,4.5) -- cycle; 
    
        \draw [,black,fill=blue] (1,3.5) -- (2,4) -- (2,3) -- (1,2.5) -- cycle;     
        \draw [,black,fill=blue,xshift=1cm,yshift=-1.5cm] (1,3.5) -- (2,4) -- (2,3) -- (1,2.5) -- cycle;  
        \draw [,black,fill=blue,xshift=2cm,yshift=-3cm] (1,3.5) -- (2,4) -- (2,3) -- (1,2.5) -- cycle;  
        \end{scope}

        \begin{scope}[shift={(27,-0.5)}]
            \draw [,black,fill=red] (0,0) -- (0,2) -- (1,1.5) -- (1,-0.5) -- cycle;
            \draw[,black] (0,1) -- (1,0.5);
             \draw [,black,fill=green] (1,-0.5) -- (1,0.5) -- (2,0) -- (2,-1) -- cycle;
             \draw [,black,fill=yellow] (0,2) -- (1,2.5) -- (2,2) -- (1,1.5) -- cycle;
             \draw [,black,fill=yellow,xshift=1cm,yshift=-1.5cm] (0,2) -- (1,2.5) -- (2,2) -- (1,1.5) -- cycle;
             \draw [,black,fill=blue] (1,1.5) -- (2,2) -- (2,1) -- (1,0.5) -- cycle; 
             \draw [,black,fill=blue,xshift=1cm,yshift=-1.5cm] (1,1.5) -- (2,2) -- (2,1) -- (1,0.5) -- cycle; 

        \end{scope}
        
    \end{tikzpicture}

\caption{Construction of the counterexample in $\Quot_2^8 \A^3$ with tangent space of dimension 39.} \label{fig:square-of-maximal-ideal}
\end{figure}

We also mention that this procedure produces counterexamples of rank $r=3$ and lengths $d=10$ and $d=11$. In fact, the modules
\[
M_1=\frac{(x,y,z^2)+I}{I}\qquad\text{and}\qquad M_2=\frac{(x,y,z)+I}{I}
\]
 correspond to points
 \[
 [ 0 \to K_1 \to R^{\oplus 3} \to M_1 \to 0] \in \Quot_3^{10} \A^3 \qquad\text{and}\qquad [ 0 \to K_2 \to R^{\oplus 3} \to M_2 \to 0] \in \Quot_3^{11} \A^3
 \]
 with 
    \[
    \dim_\C \mathsf{T}_{M_1} \Quot_3^{10} \A^3 = 69 \not\equiv 3\cdot10\pmod 2 \qquad\text{and}\qquad \dim_\C \mathsf{T}_{M_2} \Quot_3^{11} \A^3 = 70 \not\equiv 3\cdot11\pmod 2.
    \]
See \Cref{fig:other} for the combinatorial description of these two counterexamples.\footnote[8]{See the ancillary \textit{Macaulay2} \cite{M2} file \href{www.paololella.it/software/other-counterexamples-Quot-scheme.m2}{\tt other-counterexamples-Quot-scheme.m2} for the computation of the tangent space dimensions of these counterexamples.}

\begin{figure}[!ht]
\centering
\begin{tikzpicture}[xscale=0.25,yscale=0.3]
        \begin{scope}[shift={(0,0)}]
        \draw[,black,fill=red] (0,0) -- (0,3) -- (2,2) -- (2,1) -- (3,0.5) -- (3,-0.5) -- (4,-1) -- (4,-2) -- cycle; 
        \draw[,black] (1,-0.5) -- (1,2.5);
        \draw[,black] (2,-1) -- (2,1);
        \draw[,black] (3,-1.5) -- (3,-0.5);
        \draw[,black] (0,1) -- (3,-0.5);
        \draw[,black] (0,2) -- (2,1);

        \draw [,black,fill=green] (0,3) -- (0,4) -- (1,3.5) -- (1,2.5) -- cycle; 
        \draw [,black,fill=yellow] (0,4) -- (1,3.5) -- (2,4) -- (1,4.5) -- cycle; 
        \draw [,black,fill=yellow,xshift=1cm,yshift=-1.5cm] (0,4) -- (1,3.5) -- (2,4) -- (1,4.5) -- cycle; 
        \draw [,black,fill=yellow,xshift=2cm,yshift=-3cm] (0,4) -- (1,3.5) -- (2,4) -- (1,4.5) -- cycle; 
        \draw [,black,fill=yellow,xshift=3cm,yshift=-4.5cm] (0,4) -- (1,3.5) -- (2,4) -- (1,4.5) -- cycle; 
    
        \draw [,black,fill=blue] (1,3.5) -- (2,4) -- (2,3) -- (1,2.5) -- cycle;     
        \draw [,black,fill=blue,xshift=1cm,yshift=-1.5cm] (1,3.5) -- (2,4) -- (2,3) -- (1,2.5) -- cycle;  
        \draw [,black,fill=blue,xshift=2cm,yshift=-3cm] (1,3.5) -- (2,4) -- (2,3) -- (1,2.5) -- cycle;  
        \draw [,black,fill=blue,xshift=3cm,yshift=-4.5cm] (1,3.5) -- (2,4) -- (2,3) -- (1,2.5) -- cycle;  
        \end{scope}

        \begin{scope}[shift={(-2.5,-1.25)}]
            \draw [,black,fill=red] (0,0) -- (0,2) -- (1,1.5) -- (1,-0.5) -- cycle;
            \draw[,black] (0,1) -- (1,0.5);
             \draw [,black,fill=green] (1,-0.5) -- (1,0.5) -- (2,0) -- (2,-1) -- cycle;
             \draw [,black,fill=yellow] (0,2) -- (1,2.5) -- (2,2) -- (1,1.5) -- cycle;
             \draw [,black,fill=yellow,xshift=1cm,yshift=-1.5cm] (0,2) -- (1,2.5) -- (2,2) -- (1,1.5) -- cycle;
             \draw [,black,fill=blue] (1,1.5) -- (2,2) -- (2,1) -- (1,0.5) -- cycle; 
             \draw [,black,fill=blue,xshift=1cm,yshift=-1.5cm] (1,1.5) -- (2,2) -- (2,1) -- (1,0.5) -- cycle; 

             \draw [densely dotted,thick] (1,2.5) -- (2.5,3.25);
             \draw [densely dotted,thick,xshift=1cm,yshift=-0.5cm] (1,2.5) -- (2.5,3.25);
             \draw [densely dotted,thick,xshift=1cm,yshift=-1.5cm](1,2.5) -- (2.5,3.25);
             \draw [densely dotted,thick,xshift=2cm,yshift=-2cm](1,2.5) -- (2.5,3.25);
             \draw [densely dotted,thick,xshift=2cm,yshift=-3cm] (1,2.5) -- (2.5,3.25);
        \end{scope}

        \node at (6,1) [] {$\setminus$};

        \begin{scope}[shift={(7.5,0)}]
        \draw [,black,fill=red] (0,0) -- (0,1) -- (2,0) -- (2,-1) -- cycle;
        \draw [black] (1,-0.5) -- (1,0.5);
         \draw [,black,fill=yellow] (0,1) -- (1,0.5) -- (2,1) -- (1,1.5) -- cycle;
         \draw [,black,fill=yellow,xshift=1cm,yshift=-.5cm] (0,1) -- (1,0.5) -- (2,1) -- (1,1.5) -- cycle;

         \draw [,black,fill=blue] (2,0) -- (2,-1) -- (3,-.5) -- (3,0.5) -- cycle;
        \end{scope}

        \node at (12.5,1) [] {$=$};

        \node at (12.5,-3) [] {$M_1$};

        \begin{scope}[shift={(17.,0)}]
        \draw[,black,fill=red] (0,1) -- (0,3) -- (2,2) -- (2,1) -- (3,0.5) -- (3,-0.5) -- (4,-1) -- (4,-2) -- (2,-1) -- (2,0) -- cycle; 
        \draw[,black] (1,0.5) -- (1,2.5);
        \draw[,black] (2,0) -- (2,1);
        \draw[,black] (3,-1.5) -- (3,-0.5);
        \draw[,black] (0,1) -- (3,-0.5);
        \draw[,black] (0,2) -- (2,1);

\draw[thin,dashed] (0,1) -- (0,0) -- (2,-1);
       \draw[thin,dashed] (1,-0.5) -- (1,0.5);

        \draw [,black,fill=green] (0,3) -- (0,4) -- (1,3.5) -- (1,2.5) -- cycle; 
        \draw [,black,fill=yellow] (0,4) -- (1,3.5) -- (2,4) -- (1,4.5) -- cycle; 
        \draw [,black,fill=yellow,xshift=1cm,yshift=-1.5cm] (0,4) -- (1,3.5) -- (2,4) -- (1,4.5) -- cycle; 
        \draw [,black,fill=yellow,xshift=2cm,yshift=-3cm] (0,4) -- (1,3.5) -- (2,4) -- (1,4.5) -- cycle; 
        \draw [,black,fill=yellow,xshift=3cm,yshift=-4.5cm] (0,4) -- (1,3.5) -- (2,4) -- (1,4.5) -- cycle; 
    
        \draw [,black,fill=blue] (1,3.5) -- (2,4) -- (2,3) -- (1,2.5) -- cycle;     
        \draw [,black,fill=blue,xshift=1cm,yshift=-1.5cm] (1,3.5) -- (2,4) -- (2,3) -- (1,2.5) -- cycle;  
        \draw [,black,fill=blue,xshift=2cm,yshift=-3cm] (1,3.5) -- (2,4) -- (2,3) -- (1,2.5) -- cycle;  
        \draw [,black,fill=blue,xshift=3cm,yshift=-4.5cm] (1,3.5) -- (2,4) -- (2,3) -- (1,2.5) -- cycle;  
        \end{scope}

        \begin{scope}[shift={(14.5,-1.25)}]
            \draw [,black,fill=red] (0,0) -- (0,2) -- (1,1.5) -- (1,-0.5) -- cycle;
            \draw[,black] (0,1) -- (1,0.5);
             \draw [,black,fill=green] (1,-0.5) -- (1,0.5) -- (2,0) -- (2,-1) -- cycle;
             \draw [,black,fill=yellow] (0,2) -- (1,2.5) -- (2,2) -- (1,1.5) -- cycle;
             \draw [,black,fill=yellow,xshift=1cm,yshift=-1.5cm] (0,2) -- (1,2.5) -- (2,2) -- (1,1.5) -- cycle;
             \draw [,black,fill=blue] (1,1.5) -- (2,2) -- (2,1) -- (1,0.5) -- cycle; 
             \draw [,black,fill=blue,xshift=1cm,yshift=-1.5cm] (1,1.5) -- (2,2) -- (2,1) -- (1,0.5) -- cycle; 

             \draw [densely dotted,thick] (1,2.5) -- (2.5,3.25);
             \draw [densely dotted,thick,xshift=1cm,yshift=-0.5cm] (1,2.5) -- (2.5,3.25);
             \draw [densely dotted,thick,xshift=1cm,yshift=-1.5cm](1,2.5) -- (2.5,3.25);
             \draw [densely dotted,thick,xshift=2cm,yshift=-2cm](1,2.5) -- (2.5,3.25);
             \draw [densely dotted,thick,xshift=2cm,yshift=-3cm] (1,2.5) -- (2.5,3.25);
        \end{scope}

    \end{tikzpicture}
    \hspace{2cm}
    \begin{tikzpicture}[xscale=0.25,yscale=0.3]
        \begin{scope}[shift={(0,0)}]
        \draw[,black,fill=red] (0,0) -- (0,3) -- (2,2) -- (2,1) -- (3,0.5) -- (3,-0.5) -- (4,-1) -- (4,-2) -- cycle; 
        \draw[,black] (1,-0.5) -- (1,2.5);
        \draw[,black] (2,-1) -- (2,1);
        \draw[,black] (3,-1.5) -- (3,-0.5);
        \draw[,black] (0,1) -- (3,-0.5);
        \draw[,black] (0,2) -- (2,1);

        \draw [,black,fill=green] (0,3) -- (0,4) -- (1,3.5) -- (1,2.5) -- cycle; 
        \draw [,black,fill=yellow] (0,4) -- (1,3.5) -- (2,4) -- (1,4.5) -- cycle; 
        \draw [,black,fill=yellow,xshift=1cm,yshift=-1.5cm] (0,4) -- (1,3.5) -- (2,4) -- (1,4.5) -- cycle; 
        \draw [,black,fill=yellow,xshift=2cm,yshift=-3cm] (0,4) -- (1,3.5) -- (2,4) -- (1,4.5) -- cycle; 
        \draw [,black,fill=yellow,xshift=3cm,yshift=-4.5cm] (0,4) -- (1,3.5) -- (2,4) -- (1,4.5) -- cycle; 
    
        \draw [,black,fill=blue] (1,3.5) -- (2,4) -- (2,3) -- (1,2.5) -- cycle;     
        \draw [,black,fill=blue,xshift=1cm,yshift=-1.5cm] (1,3.5) -- (2,4) -- (2,3) -- (1,2.5) -- cycle;  
        \draw [,black,fill=blue,xshift=2cm,yshift=-3cm] (1,3.5) -- (2,4) -- (2,3) -- (1,2.5) -- cycle;  
        \draw [,black,fill=blue,xshift=3cm,yshift=-4.5cm] (1,3.5) -- (2,4) -- (2,3) -- (1,2.5) -- cycle;  
        \end{scope}

        \begin{scope}[shift={(-2.5,-1.25)}]
            \draw [,black,fill=red] (0,0) -- (0,2) -- (1,1.5) -- (1,-0.5) -- cycle;
            \draw[,black] (0,1) -- (1,0.5);
             \draw [,black,fill=green] (1,-0.5) -- (1,0.5) -- (2,0) -- (2,-1) -- cycle;
             \draw [,black,fill=yellow] (0,2) -- (1,2.5) -- (2,2) -- (1,1.5) -- cycle;
             \draw [,black,fill=yellow,xshift=1cm,yshift=-1.5cm] (0,2) -- (1,2.5) -- (2,2) -- (1,1.5) -- cycle;
             \draw [,black,fill=blue] (1,1.5) -- (2,2) -- (2,1) -- (1,0.5) -- cycle; 
             \draw [,black,fill=blue,xshift=1cm,yshift=-1.5cm] (1,1.5) -- (2,2) -- (2,1) -- (1,0.5) -- cycle; 

             \draw [densely dotted,thick] (1,2.5) -- (2.5,3.25);
             \draw [densely dotted,thick,xshift=1cm,yshift=-0.5cm] (1,2.5) -- (2.5,3.25);
             \draw [densely dotted,thick,xshift=1cm,yshift=-1.5cm](1,2.5) -- (2.5,3.25);
             \draw [densely dotted,thick,xshift=2cm,yshift=-2cm](1,2.5) -- (2.5,3.25);
             \draw [densely dotted,thick,xshift=2cm,yshift=-3cm] (1,2.5) -- (2.5,3.25);
        \end{scope}

        \node at (6,1) [] {$\setminus$};
\node at (11.5,-3) [] {$M_2$};

        \begin{scope}[shift={(7.5,0)}]
        \draw [,black,fill=red] (0,0) -- (0,1) -- (1,0.5) -- (1,-0.5) -- cycle;
         \draw [,black,fill=yellow] (0,1) -- (1,0.5) -- (2,1) -- (1,1.5) -- cycle;
	
         \draw [,black,fill=blue] (1,0.5) -- (1,-0.5) -- (2,0) -- (2,1) -- cycle;
        \end{scope}

        \node at (11.5,1) [] {$=$};

        \begin{scope}[shift={(16.,0)}]
        \draw[,black,fill=red] (0,1) -- (0,3) -- (2,2) -- (2,1) -- (3,0.5) -- (3,-0.5) -- (4,-1) -- (4,-2) -- (1,-0.5) -- (1,0.5) -- cycle; 
        \draw[,black] (1,0.5) -- (1,2.5);
        \draw[,black] (2,-1) -- (2,1);
        \draw[,black] (3,-1.5) -- (3,-0.5);
        \draw[,black] (0,1) -- (3,-0.5);
        \draw[,black] (0,2) -- (2,1);
\draw[thin,dashed] (0,1) -- (0,0) -- (1,-0.5);

        \draw [,black,fill=green] (0,3) -- (0,4) -- (1,3.5) -- (1,2.5) -- cycle; 
        \draw [,black,fill=yellow] (0,4) -- (1,3.5) -- (2,4) -- (1,4.5) -- cycle; 
        \draw [,black,fill=yellow,xshift=1cm,yshift=-1.5cm] (0,4) -- (1,3.5) -- (2,4) -- (1,4.5) -- cycle; 
        \draw [,black,fill=yellow,xshift=2cm,yshift=-3cm] (0,4) -- (1,3.5) -- (2,4) -- (1,4.5) -- cycle; 
        \draw [,black,fill=yellow,xshift=3cm,yshift=-4.5cm] (0,4) -- (1,3.5) -- (2,4) -- (1,4.5) -- cycle; 
    
        \draw [,black,fill=blue] (1,3.5) -- (2,4) -- (2,3) -- (1,2.5) -- cycle;     
        \draw [,black,fill=blue,xshift=1cm,yshift=-1.5cm] (1,3.5) -- (2,4) -- (2,3) -- (1,2.5) -- cycle;  
        \draw [,black,fill=blue,xshift=2cm,yshift=-3cm] (1,3.5) -- (2,4) -- (2,3) -- (1,2.5) -- cycle;  
        \draw [,black,fill=blue,xshift=3cm,yshift=-4.5cm] (1,3.5) -- (2,4) -- (2,3) -- (1,2.5) -- cycle;  
        \end{scope}

        \begin{scope}[shift={(13.5,-1.25)}]
            \draw [,black,fill=red] (0,0) -- (0,2) -- (1,1.5) -- (1,-0.5) -- cycle;
            \draw[,black] (0,1) -- (1,0.5);
             \draw [,black,fill=green] (1,-0.5) -- (1,0.5) -- (2,0) -- (2,-1) -- cycle;
             \draw [,black,fill=yellow] (0,2) -- (1,2.5) -- (2,2) -- (1,1.5) -- cycle;
             \draw [,black,fill=yellow,xshift=1cm,yshift=-1.5cm] (0,2) -- (1,2.5) -- (2,2) -- (1,1.5) -- cycle;
             \draw [,black,fill=blue] (1,1.5) -- (2,2) -- (2,1) -- (1,0.5) -- cycle; 
             \draw [,black,fill=blue,xshift=1cm,yshift=-1.5cm] (1,1.5) -- (2,2) -- (2,1) -- (1,0.5) -- cycle; 

             \draw [densely dotted,thick] (1,2.5) -- (2.5,3.25);
             \draw [densely dotted,thick,xshift=1cm,yshift=-0.5cm] (1,2.5) -- (2.5,3.25);
             \draw [densely dotted,thick,xshift=1cm,yshift=-1.5cm](1,2.5) -- (2.5,3.25);
             \draw [densely dotted,thick,xshift=2cm,yshift=-2cm](1,2.5) -- (2.5,3.25);
             \draw [densely dotted,thick,xshift=2cm,yshift=-3cm] (1,2.5) -- (2.5,3.25);
        \end{scope}

    \end{tikzpicture}

\caption{Combinatorial construction of counterexamples in $\Quot_3^{10} \A^3$ and $\Quot_3^{11} \A^3$.}\label{fig:other} 
\end{figure}
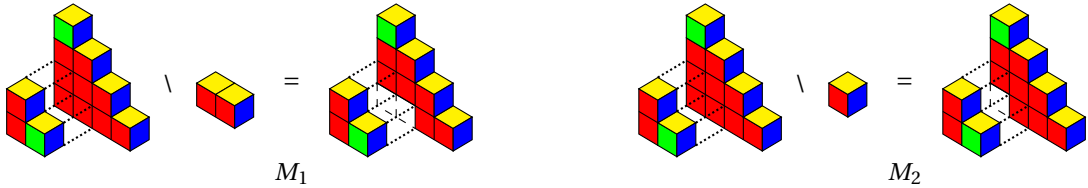

\subsection{A remark on the Behrend function}\label{subsec:bfunction}
The counterexamples given \cite{GGGL} concerns zero-dimensional closed subschemes of $\A^3$ of length 12. This might suggest that the failure of the parity conjecture is related to the irreducibility of the Hilbert schemes of points on smooth threefold. However, the two things seem nowadays to be unrelated as remarked by the authors. Similarly, one can address the same question for the points disproving the constancy of the Behrend function\footnote[9]{The constancy of this function was expected by results in \cite{BFHilb}.} given in \cite{behrendnonconstant}. Notice that these points contradict the parity conjecture as well. Again, the two aspects seem to be unrelated. Indeed, the ideal
\[
J_{JKS}=\left( (x^2)+(y,z)^2\right)^2 +(y^3-x^3z)=\left(x^4, x^2y^2, x^2yz, x^2z^2, y^4, y^3z, y^2z^2, yz^3, y^3-x^3z, z^4 \right)
\]
given in \cite{behrendnonconstant} is smoothable as we show in the following proposition.

\begin{proposition}\label{prop:smoothableBeh}
    The ideal $J_{JKS}$ is smoothable.
 \end{proposition}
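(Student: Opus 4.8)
Here is how I would approach this.

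\medskip

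Since the smoothable locus $\Hilb^{d}_{\mathrm{sm}}\A^3\subseteq\Hilb^{d}\A^3$ is a closed irreducible component, it is enough to exhibit \emph{one} flat family over $\A^1$ (equivalently, over a smooth pointed curve) whose special fibre is $V(J_{JKS})$ and whose general fibre is smoothable. First I would record the colength. Writing $\mathfrak q=(x^2)+(y,z)^2=(x^2,y^2,yz,z^2)$, the monomial ideal $\mathfrak q^2$ has colength $26$; passing to $J_{JKS}=\mathfrak q^2+(y^3-x^3z)$ imposes exactly the two relations $\overline{y^3}=\overline{x^3z}$ and $\overline{xy^3}=0$ (coming from $y^3-x^3z$ and from $x\cdot(y^3-x^3z)=xy^3-x^4z$, noting $x^4z\in\mathfrak q^2$), so $\dim_{\C}(R/J_{JKS})=24$. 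The key structural observation is that $24=4\cdot 6$, where $6=\dim_{\C}(R/\mathfrak q)$ is the length of the ``monomial shape'' $V(\mathfrak q)$ of $J_{JKS}$ (Hilbert function $(1,3,2)$) and $4$ is the length of a tangent-space fat point $V(\mathfrak m_p^2)$ in $\A^3$.

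\medskip

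The plan is to realise $V(J_{JKS})$ as a flat limit of a disjoint union of six length-$4$ fat points. For any six distinct points $p_1,\dots,p_6\in\A^3$ one has $\bigcap_{i=1}^6\mathfrak m_{p_i}^2=\bigl(\bigcap_{i=1}^6\mathfrak m_{p_i}\bigr)^2$ (check locally at each $p_i$, where the other factors become the unit ideal), so $\bigsqcup_{i=1}^6 V(\mathfrak m_{p_i}^2)$ has length $24$. I would then choose a $1$-parameter family $p_1(t),\dots,p_6(t)$ of distinct points, all tending to the origin as $t\to 0$ and colliding there with Hilbert function $(1,3,2)$, so that the flat limit
\[
I_0:=\lim_{t\to 0}\Bigl(\bigcap_{i=1}^6\mathfrak m_{p_i(t)}\Bigr)^2
\]
equals $J_{JKS}$. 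Granting this, each $V(\mathfrak m_{p_i(t)}^2)$ has length $4$, hence is smoothable (every length-$4$ subscheme of $\A^3$ is smoothable, $\Hilb^4\A^3$ being irreducible); so the general fibre $\bigsqcup_i V(\mathfrak m_{p_i(t)}^2)$ is smoothable, and closedness of the smoothable component forces $[R/J_{JKS}]$ to be smoothable. This is the same mechanism underlying the smoothability of the length-$12$ ideal $((x)+(y,z)^2)^2+(y^3-xz)$ from \cite{GGGL}, which has the analogous shape with $\mathfrak q$ replaced by the length-$3$ ideal $(x)+(y,z)^2$.

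\medskip

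To produce the collision explicitly, note that any such flat limit $I_0$ is a colength-$24$ ideal with $\mathfrak q^2\subseteq I_0\subseteq\mathfrak q$ (the first inclusion because $\bigl(\bigcap\mathfrak m_{p_i(t)}\bigr)^2\supseteq\mathfrak q^2$ in the limit once $\bigcap\mathfrak m_{p_i(t)}\to\mathfrak q$, the second because $I_0\subseteq\lim\bigcap\mathfrak m_{p_i(t)}=\mathfrak q$); since $\mathfrak q^2$ has colength $26$, such an $I_0$ is pinned down by the $2$-dimensional subspace $W=I_0/\mathfrak q^2\subseteq R/\mathfrak q^2$, and $J_{JKS}$ corresponds to $W_{JKS}=\langle\,\overline{y^3-x^3z},\ \overline{xy^3}\,\rangle$. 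I would therefore take an explicit collision curve --- e.g.\ six points of the form $\bigl(\pm t,\ a\,\varphi(t),\ b\,\psi(t)\bigr)$ with $(a,b)$ ranging over three fixed non-collinear vectors and $\varphi,\psi$ suitable powers of $t$, possibly after a linear change of coordinates --- form $\bigl(\bigcap\mathfrak m_{p_i(t)}\bigr)^2\subseteq\C[t][x,y,z]$, compute its flat limit at $t=0$ by saturation/elimination, and tune the parameters so that the resulting $W$ is $W_{JKS}$; this is a finite but delicate check, best carried out (and certainly most easily verified) with \emph{Macaulay2}. The \textbf{main obstacle} is exactly this matching step: colength-$24$ limits ``of the right shape'' are easy to produce, but only special collisions of the six reduced points realise the particular binomial relation $y^3-x^3z$ together with its forced companion $xy^3\in I_0$, since this relation records how the socle directions of the colliding points must align --- indeed the $5$-parameter family of cubic corrections in \eqref{eq:generalcount} is precisely this ambiguity in the length-$12$ case. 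An equally valid, more self-contained route is to deform the generators of $J_{JKS}$ directly, replacing $x^3z$ and the quartic generators by a $t$-dependent version that visibly factors through the ideal of $6$ (or fewer) disjoint smoothable fat points for $t\neq 0$, trading the conceptual picture for a somewhat more opaque computation.
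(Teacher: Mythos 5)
Your overall mechanism is the right one and coincides with the paper's: produce a flat $\A^1$-family with special fibre $V(J_{JKS})$ whose general fibre is a disjoint union of fat points that are individually smoothable, then invoke closedness of the smoothable locus. The paper does exactly this, but with a different and, crucially, \emph{explicit} degeneration: it writes down an ideal $\mathcal{J}_{JKS}\subset\C[t][x,y,z]$ specialising to $J_{JKS}$ at $t=0$ and checks directly that the general fibre is a disjoint union of three fat points of lengths $10$, $8$ and $6$, each smoothable because $\Hilb^{d}\A^3$ is irreducible for $d\leqslant 11$.

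The gap in your write-up is that the decisive step is never carried out. You propose to realise $J_{JKS}$ as the flat limit of $\bigcap_{i}\mathfrak{m}_{p_i(t)}^2$ for six colliding points, but you do not exhibit the collision, and you yourself flag the matching of the limit with the binomial relation $y^3-x^3z$ as the ``main obstacle''. That matching is the entire content of the proposition: a priori it is not even clear that $[J_{JKS}]$ lies in the closure in $\Hilb^{24}\A^3$ of the locus of (saturated) squares of radical ideals of six points --- this is a proper irreducible subvariety, and membership in its closure is precisely the kind of statement that requires a certificate. Until a concrete one-parameter family is produced and its flat limit computed to be $J_{JKS}$, the argument is a plausible plan rather than a proof. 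Two smaller points: the identity $\bigcap_i\mathfrak{m}_{p_i}^2=\bigl(\bigcap_i\mathfrak{m}_{p_i}\bigr)^2$ amounts to the ordinary square of the ideal of six points having no embedded primes, which is not automatic for an arbitrary configuration (ordinary versus symbolic powers); your local check only compares the two ideals at the minimal primes. This is harmless, since you can work with $\bigcap_i\mathfrak{m}_{p_i}^2$ directly, which manifestly defines the disjoint union of six length-$4$ fat points. Finally, the ``more self-contained route'' you mention in closing --- deforming the generators of $J_{JKS}$ by hand so that the general fibre visibly splits into disjoint smoothable fat points --- is essentially what the paper does; but there, too, everything hinges on actually writing the family down and verifying the splitting.
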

\begin{proof}  The existence of the $\A^1$-flat family $\mathcal{Z}\subset \A^3_{x,y,z}\times \A^1_{t}$ defined by the ideal $\mathcal J_{JKS}\subset \C[t][x,y,z]$ given by
     \[ 
    \begin{split}
     \mathcal{J}_{JKS}=&\left( x^4, x^2y^2, xy^3, x^2yz, y^4, x^2z^2+t  x^2z, y^3z+t y^3, y^2z^2-t^2  y^2,yz^3-t^2  yz, x^3z-y^3,z^4-t^2  z^2\right),
     \end{split}
 \]

 certificates the validity of the statement. Indeed, a direct check shows that the general member $Z$ of $\mathcal{Z}$ corresponds to a smooth point of $[Z]\in \Hilb^{24} \A^3$ and that it is the disjoint union of three fat points of length 10, 8, and 6, and hence the smoothability of the scheme defined by the ideal $J_{JKS}$ follows by the irreducibility of $\Hilb^{10} \A^3$ and the fact that smoothability is a closed property. 
 \end{proof} 
 We conclude this section with one example that shows that the Behrend function is not constant on the $\Quot$-schemes as well. This happens, for rank $r$ greater or equal than three, already for $d=13$. 
 
 \begin{example}\label{ex:Beh}
     The Behrend function is not constant on the $\Quot$-schemes of the affine space $\A^3$. This question has been addressed by the third author and A. T. Ricolfi in \cite{GRICOLFI}. This example is constructed following the strategy explained in \Cref{subsec:explanation}.

     Consider the $R$-module $M=J/(J\cap J_{JKS})$, where $J=(y^2,x z,y z) \subset R$. 
     Then, we identify $M$ with a point
     \[
     [0\to K \to R^{\oplus 3} \to M \to 0 ]\in\Quot_3^{13}\A^3
    \]
    with tangent space $\mathsf T_{[M]}\Quot_{3}^{13} \A^3$ of dimension $86$. This is enough to apply the arguments in \cite{behrendnonconstant} and to conclude non-constancy of the Behrend function on $\Quot_{3}^{13}\A^3$. Indeed, as proven in \cite{VIRTUALCOUNTS}, $\Quot$-schemes of $\A^3$ are critical loci.\footnote[10]{See the ancillary \textit{Macaulay2} \cite{M2} file \href{www.paololella.it/software/other-counterexamples-Quot-scheme.m2}{\tt other-counterexamples-Quot-scheme.m2}.}
 \end{example}
 
\section{Nested Hilbert scheme of points}\label{sec:nested}
In this section we mainly study the tangent space to the nested Hilbert scheme. Precisely, in \Cref{subsec:TNT} we review the theory developed in \cite{ELEMENTARY} in nested terms, and we prove \Cref{thmintro:tnt per nested} of the introduction. While, in \Cref{subsec:nonredcomp} we give examples of non-reduced components of the nested Hilbert schemes and we prove \Cref{thmintro:main2}. Finally, in \Cref{subsec:newelem} we provide new examples of elementary components of the Hilbert schemes of points by proving \Cref{thmintro:newcomp}.

\begin{notation}
    In order to ease the notation, for any vector $\mathbf{d}\in \Z^r$ we will denote by $d_i$, for $i=1,\ldots,r$, its entries. Moreover, if $\mathbf{d}\in \Z^r$ is a non-decreasing sequence of non-negative integers, by $\mathbf{d}$-nesting $Z $ in $X$ we mean that $Z=(Z_i)_{i=1}^r$, where $Z_{1}\subset\cdots \subset Z_{r}\subset X$ are closed zero-dimensional subschemes and $\len Z_i=d_i$, for $i=1,\ldots,r$. Finally, the support of the nesting $Z$ is the reduced scheme $\supp Z=\supp Z_r$.
\end{notation}

Let $X$ be a smooth quasi-projective variety and let $\mathbf{d} \in \Z^r$ be a non-decreasing   sequence of non-negative integers. The $\mathbf{d}$-nested Hilbert functor of $X$, $\underline{\Hilb}^{\mathbf{d}}X:\Sch^{\opp}_{\C} \to \Sets$, is the contravariant functor defined as follows
\[
\left(\underline{\Hilb}^{\mathbf{d}}X\right)(S)=\Set{(\Calz_i)_{i=1}^r |\Calz_i\subset X\times S\ 
S\mbox{-flat and $S$-finite closed subschemes},\\   \Calz_i\subset\Calz_{i+1} ,\ \len_S\Calz_i=d_i   }, 
\]
where $\len_S$ denotes the relative length. Analogously to the $\Quot$-functor, $\underline{\Hilb}^{\mathbf{d}}X$ is representable  by a quasi-projective scheme, see \cite[Theorem 4.5.1]{sernesi} and \cite{Kleppe}. We call it \textit{nested Hilbert scheme}\footnote[11]{This scheme is sometimes called flag Hilbert scheme.} and we denote it by $\Hilb^{\mathbf{d}}X$. We will denote points of the nested Hilbert scheme by $[Z]$.

\subsection{Tangent space and negative tangents}\label{subsec:TNT}
In what follows, we generalise some results from \cite{ELEMENTARY} to nested Hilbert schemes.

Let us fix some non-decreasing sequence of non-negative integers $\mathbf d\in \Z^r$  and a point $[Z] \in \Hilb^{\mathbf d} X$. Then, there is a natural identification of the tangent space $\mathsf T_{[Z]} \Hilb^{\mathbf d} X$ with the vector subspace of the direct sum $ \bigoplus_{i=1}^r\mathsf T_{[Z_i]}\Hilb^{d_i} X$ (see \eqref{eq:tangentquot}) consisting of $r$-tuples $(\varphi_i)_{i=1}^r$ such that all the squares of the following diagram 
\begin{equation}\label{eq:tangentvector}
    \begin{tikzcd}[row sep=huge,column sep=huge]
     \Cali_1\arrow[d,"\varphi_1"'] & \Cali_2\arrow[l,hook']\arrow[d,"\varphi_2"]& \Cali_3\arrow[l,hook']\arrow[d,"\varphi_3"]& \Cali_{r-1}\arrow[d,"\varphi_{r-1}"]\arrow[l,dotted,hook']&\Cali_r\arrow[l,hook']\arrow[d,"\varphi_r"]\\
     \Calo_X/\Cali_1 & \Calo_X/\Cali_2\arrow[l,two heads]& \Calo_X/\Cali_3\arrow[l,two heads]& \Calo_X/\Cali_{r-1}\arrow[l,dotted,two heads]&\Calo_X/\Cali_r\arrow[l,two heads],
\end{tikzcd}
\end{equation}
commute {\cite[Section 4.5]{sernesi}}.

Let $X$ be a smooth quasi projective scheme and let $p\in X$ be a closed point with maximal ideal sheaf $\mm_p\subset \Calo_X$. For a fat point $Z\subset X$ supported at $p $ we denote by $\Cali_Z\subset\Calo_X$  the ideal sheaf of $Z$. Moreover, we put 
\[ \Cali_Z^{\geqslant k} = \Cali_Z\cap \mm_p^k\mbox{ and }\Calo_Z^{\geqslant k} = (\mm_p^k + \Cali_Z)/\Cali_Z \subset \Calo_Z. \]

\begin{definition}
Let $ \mathbf{d}\in\Z^r$ be a non-decreasing sequence of non-negative integers. A \emph{fat nesting} in $X$ is a nesting $Z=(Z_i)_{i=1}^r$ of fat points in $X$ with same support $p\in X$, i.e.~$[Z]\in \Hilb^{\mathbf d} X$ and
$\sqrt{\Cali_{Z_i}}=\mm_p$, where $\mm_p\subset \Calo_X$ is the maximal ideal sheaf of $p$, for all $i=1,\ldots,r$.  

    Moreover, an irreducible component $V\subset \Hilb^{\mathbf d} X$ is \textit{elementary} if it parametrises just fat nestings, and \textit{composite} otherwise. Finally, a nesting  $Z$ in $X$ corresponding to a point of a composite component is said to be \textit{cleavable} (cf. \cite{Iarrocomponent,ELEMENTARY}).
    
\end{definition}
\begin{definition}\label{def:negativetangents}
Let $\mathbf{d}\in \Z^r$ be a non-decreasing sequence of non-negative integers and let $[Z]\in \Hilb^{\mathbf d} X$ be a fat nesting.
Then, the \textit{non-negative part of the tangent space} $\mathsf T_{ [Z]}   \Hilb^{\mathbf d} X$  is the following vector subspace
\[
\mathsf T_{ [Z]}^{\geqslant0}  \Hilb^{\mathbf d} X  = \Set{\varphi\in\mathsf T_{ [Z]} \Hilb^{\mathbf d} X | \varphi(\Cali_{Z_i}^{\geqslant k}) \subset \Calo_{Z_i}^{\geqslant k}\mbox{ for all }k \in\N\mbox{ and for }i=1,\ldots,r}. 
\]
While, the \textit{negative tangent space} at $[Z]\in \Hilb^{\mathbf d} X$ is
\[
\mathsf T_{ [Z]}^{< 0}  \Hilb^{\mathbf d} X=\frac{ \mathsf T_{ [Z]}  \Hilb^{\mathbf d} X}{ \mathsf T_{ [Z]}^{\geqslant0}  \Hilb^{\mathbf d} X}.
\]
\end{definition}
The\hfill non-negative\hfill part\hfill of\hfill the\hfill tangent\hfill space\hfill can\hfill be\hfill interpreted\hfill as\hfill the\hfill tangent\hfill space\hfill to\hfill the\hfill so-called\\ \textit{Bia{\l{}}ynicki-Birula cell},
whose definition we recall now. Let $Z$ be a fat nesting and consider the diagonal action of the torus $\mathbb G_m=\Spec \C[s,s^{-1}]$ on $\mathsf T_{[Z]}\Hilb^{\mathbf d} X $ given by homotheties. Given a point $p\in X$,  the corresponding Bia{\l{}}ynicki-Birula cell is the quasi-projective scheme $\Hilb_p^{\mathbf{d}} X$ representing the following functor
\[ 
   \left( \underline{\Hilb}_p^{\mathbf d,+}X\right)(S) = \Set{\varphi\colon \overline{\mathbb G}_m \times S \rightarrow \Hilb^{\mathbf d} X | \supp Z=p,\ \forall\ [Z]\in  \varphi(\overline{\mathbb G}_m \times S ),\  \varphi \mbox{ is $\mathbb G_m$-equivariant}} 
\]
where, by convention $\overline{\mathbb G}_m= \Spec \C[ s^{-1}]$.

Set-theoretically it is the subset of the nested Hilbert scheme parametrising fat nestings supported at $p$, i.e.
\[
\Hilb_p^{\mathbf d} X=\Set{[Z]\in \Hilb^{\mathbf d} X| \supp (Z_i)=p, \mbox{ for all }i=1,\ldots,r}.
\] 
The deformation theory of a point $[Z]\in \Hilb_p^{\mathbf d} X$ is described in terms of the tangent space   $\mathsf T_{ [Z]}  \Hilb_p^{\mathbf d} X$. The following proposition from \cite{ELEMENTARY} expresses it in  terms of the non-negative tangent space at $[Z]$.
\begin{proposition}[{\cite[Theorem 4.11]{ELEMENTARY}}]\label{rem:nonneg-punctual}
Let $[Z]\in \Hilb_p^{\mathbf d} X$ be a fat nesting. Then, we have
    \[
    \mathsf T_{ [Z]}  \Hilb_p^{\mathbf d}= \mathsf T_{ [Z]}^{\geqslant0}  \Hilb^{\mathbf d} X .
    \]    
\end{proposition}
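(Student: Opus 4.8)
The plan is to bootstrap the statement from its non-nested version \cite[Theorem 4.11]{ELEMENTARY} together with \Cref{def:negativetangents}, by realising $\Hilb_p^{\mathbf d}X$ as a fibre product over the product of the ordinary punctual Hilbert schemes. Since the question is local I take $X=\A^n$, $p$ the origin, and $\mathbb G_m$ acting by dilation, i.e.\ by the action inducing the standard grading on $R=\C[x_1,\ldots,x_n]$; under this action the limit of any fat nesting supported at $p$ exists (it is the nesting of the initial subschemes) and is again supported at $p$, so $\Hilb_p^{\mathbf d}X$ is the locus of fat nestings supported at $p$ endowed with its Bia\l{}ynicki--Birula (attractor) scheme structure. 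The non-nested input I would quote is that $\mathsf T_{[Z_i]}\Hilb_p^{d_i}X=\mathsf T_{[Z_i]}^{\geqslant 0}\Hilb^{d_i}X$ inside $\mathsf T_{[Z_i]}\Hilb^{d_i}X$ for each $i=1,\ldots,r$.

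The first step is to identify $\Hilb_p^{\mathbf d}X$ as a fibre product. Recall that $\Hilb^{\mathbf d}X$ is a closed subscheme of $\prod_{i=1}^r\Hilb^{d_i}X$, cut out by the nesting incidence conditions $\Cali_{Z_{i+1}}\subseteq\Cali_{Z_i}$, and that the diagonal dilation action is compatible with this embedding. Granting that the attractor construction commutes with finite products and with closed immersions, one deduces
\[
\Hilb_p^{\mathbf d}X\;=\;\Hilb^{\mathbf d}X\;\times_{\prod_{i}\Hilb^{d_i}X}\;\prod_{i}\Hilb_p^{d_i}X .
\]

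The second step is the tangent space computation. For a closed immersion $A\hookrightarrow B$ and any morphism $C\to B$, a first-order arc in $C$ factors through the fibre product $C\times_BA$ exactly when its image in $B$ annihilates the ideal of $A$; hence $\mathsf T_x(C\times_BA)$ is the preimage of $\mathsf T_xA$ under $\mathsf T_xC\to\mathsf T_xB$, which in our situation (where both $\mathsf T_xA\to\mathsf T_xB$ and $\mathsf T_xC\to\mathsf T_xB$ are injective) equals $\mathsf T_xA\cap\mathsf T_xC$ inside $\mathsf T_xB$. Applying this with $A=\Hilb^{\mathbf d}X$, $B=\prod_i\Hilb^{d_i}X$, $C=\prod_i\Hilb_p^{d_i}X$, and using that the tangent space of a product is the product of the tangent spaces together with the description of $\mathsf T_{[Z]}\Hilb^{\mathbf d}X$ from \eqref{eq:tangentvector} as the space of compatible tuples $(\varphi_i)_i$ inside $\bigoplus_i\mathsf T_{[Z_i]}\Hilb^{d_i}X$, I obtain
\[
\mathsf T_{[Z]}\Hilb_p^{\mathbf d}X\;=\;\mathsf T_{[Z]}\Hilb^{\mathbf d}X\;\cap\;\bigoplus_{i=1}^r\mathsf T_{[Z_i]}\Hilb_p^{d_i}X\;=\;\mathsf T_{[Z]}\Hilb^{\mathbf d}X\;\cap\;\bigoplus_{i=1}^r\mathsf T_{[Z_i]}^{\geqslant 0}\Hilb^{d_i}X,
\]
the second equality being the non-nested theorem applied coordinatewise. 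Finally, unwinding \Cref{def:negativetangents}: a compatible tuple $(\varphi_i)_i\in\mathsf T_{[Z]}\Hilb^{\mathbf d}X$ lies in $\mathsf T_{[Z]}^{\geqslant 0}\Hilb^{\mathbf d}X$ if and only if $\varphi_i(\Cali_{Z_i}^{\geqslant k})\subseteq\Calo_{Z_i}^{\geqslant k}$ for all $k\in\N$ and all $i$, i.e.\ if and only if each $\varphi_i\in\mathsf T_{[Z_i]}^{\geqslant 0}\Hilb^{d_i}X$; so the right-hand side above is exactly $\mathsf T_{[Z]}^{\geqslant 0}\Hilb^{\mathbf d}X$, completing the proof.

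The step I expect to be the real obstacle is the cartesianity of the square in the first step, namely the claim that a compatible tuple of non-negative first-order deformations integrates to a \emph{nested} $\mathbb G_m$-equivariant family over $\C[\epsilon]$. Set-theoretically this is immediate from the $\mathbb G_m$-equivariance of the inclusions $Z_i\subset Z_{i+1}$, but the scheme-theoretic statement has to be obtained from the general compatibility of the Bia\l{}ynicki--Birula functor with closed immersions and finite products; rather than check this by hand I would quote it from \cite{ELEMENTARY} and its sources. With that granted, the rest of the argument is formal.
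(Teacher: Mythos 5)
Your argument is correct, and it is worth noting that the paper itself gives no proof of this proposition: it is stated with a bare citation to the non-nested result \cite[Theorem 4.11]{ELEMENTARY}, leaving the passage to the nested setting implicit. Your fibre-product argument makes that passage explicit. Realising $\Hilb_p^{\mathbf d}X$ as $\Hilb^{\mathbf d}X\times_{\prod_i\Hilb^{d_i}X}\prod_i\Hilb_p^{d_i}X$ reduces everything to the coordinatewise statement; the tangent space of a fibre product along a closed immersion, together with the injectivity of $\mathsf T_{[Z]}\Hilb^{\mathbf d}X\hookrightarrow\bigoplus_i\mathsf T_{[Z_i]}\Hilb^{d_i}X$ coming from \eqref{eq:tangentvector} and the fact that the conditions in \Cref{def:negativetangents} are imposed componentwise, then yields exactly the claimed equality. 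The one ingredient you take on faith --- that the attractor construction commutes with finite products and closed immersions --- is a formal property of the functor $\underline{\Hom}^{\mathbb G_m}(\overline{\mathbb G}_m,-)$, which preserves limits in its target; it is recorded in the literature on Bia\l{}ynicki--Birula decompositions (Drinfeld, Jelisiejew--Sienkiewicz) and is indeed the right thing to quote rather than reprove. The only points to verify by hand are that the closed immersion $\Hilb^{\mathbf d}X\hookrightarrow\prod_i\Hilb^{d_i}X$ is $\mathbb G_m$-equivariant for the diagonal dilation action (it is) and that the support conditions on the two sides of your fibre product agree (they do, since $\supp Z_i\subseteq\supp Z_r=\{p\}$ for a nesting). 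With that, your proof is complete and supplies a justification the paper omits.
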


    Notice that non-negative tangent vectors can be understood as concatenation of commutative diagrams of the form \eqref{eq:tangentvector} where $\varphi_i\in \mathsf T_{[Z_i]}^{\ge0} \Hilb^{d_i} X$, for all $i=1,\ldots,r$.
\begin{remark}\label{rem:deftheta}
  As shown in \cite{ELEMENTARY}, when $[Z]\in \Hilb^{d} X$ is a fat point  the tangent space of $X$ at $\supp Z$ maps to the tangent space to $ \Hilb^{d} X$ at $[Z]$. Similarly this happens for fat nestings and we give now some details. First we can suppose that $X=\A^nC$ and $\supp Z=\Set{0}$, see \Cref{rem:etale}. We denote by $R=\C[x_1,\ldots,x_n]$ the polynomial ring in $n$ variables. Then, one can identify partial derivatives $\frac{\partial}{\partial x_i}$, for $i=1,\ldots,n$, with a basis of the tangent space $\mathsf T_0\A^n$.
  This naturally induces a map 
  \[
    \begin{tikzcd}
         \mathsf T_{0} \A^n \arrow[r,"\widetilde\theta"] & \mathsf T_{[Z]}\Hilb^{\mathbf d} \A^n,
    \end{tikzcd}
  \]
associating tangent vectors to $\A^n$ at the origin to deformations consisting of translations. More precisely, the partial derivative $\frac{\partial}{\partial x_j}$, for $j=1,\ldots,n$, maps to  translations of all the schemes $Z_i$, for $i=1,\ldots,r$, along the $j$-th coordinate axis preserving the nesting conditions.

  We denote by $\theta$ the map $\theta : \mathsf T_{0} \A^n \to \mathsf T_{ [Z]}^{< 0} \Hilb^{\mathbf d} \A^n
  $ defined as  the composition of $\widetilde \theta$ with the projection defining the negative tangent space, see \Cref{def:negativetangents}.
\end{remark}
\begin{definition}
    Let $[Z]\in \Hilb^{\mathbf d} X$ be a fat nesting. Then, $[Z]$ has TNT (Trivial Negative Tangents) if the   map 
    \[
    \begin{tikzcd}
         \mathsf T_{\supp Z} X \arrow[r,"\theta"] &\mathsf T_{ [Z]}^{< 0}  \Hilb^{\mathbf d} X
    \end{tikzcd}
    \]
    is surjective.
\end{definition} 
We move now to the proof of \Cref{thmintro:tnt per nested} of the introduction which is a generalisation of {\cite[Theorem 4.9]{ELEMENTARY}}. 
\begin{theorem} \label{thm:tnt per nested} 
Let $\mathbf{d}\in \Z^r$ be any non-decreasing sequence of non-negative integers and let $V\subset \Hilb^{\mathbf d} X$ be an irreducible component.  Suppose that $V$ is generically reduced. Then $V$ is elementary if and only if a general point of $V$ has trivial negative tangents.  
\end{theorem}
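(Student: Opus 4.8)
The statement is the nested analogue of \cite[Theorem 4.9]{ELEMENTARY}, so the natural strategy is to reduce to the non-nested case as far as possible and then mimic the argument of \emph{loc.\ cit.}, making the bookkeeping with the $r$-tuples $(\varphi_i)_{i=1}^r$ explicit. First I would localise: since $V$ is an irreducible component, a general point $[Z]\in V$ has a single point in $\supp Z$ (otherwise a general nesting would split as a disjoint union over its support, and one would factor $V$ generically as a product, reducing to the case treated below), and after an étale localisation and a translation we may assume $X=\A^n_\C$ and $\supp Z=\{0\}$. By \Cref{def:negativetangents} the point $[Z]$ has TNT exactly when $\theta\colon \mathsf T_0\A^n\to \mathsf T^{<0}_{[Z]}\Hilb^{\mathbf d}\A^n$ is surjective, i.e.\ when
\[
\dim_\C \mathsf T_{[Z]}\Hilb^{\mathbf d}\A^n = \dim_\C \mathsf T^{\geqslant 0}_{[Z]}\Hilb^{\mathbf d}\A^n + n.
\]

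\textbf{The $(\Leftarrow)$ direction.} Suppose a general $[Z]\in V$ has TNT and, for contradiction, that $V$ is composite, so a general nesting $Z$ is cleavable. Cleavability means $Z$ deforms, within $\Hilb^{\mathbf d}\A^n$, to a nesting whose top member $Z_r$ is supported at two or more points; hence the general point of $V$ admits a deformation in which the support of $Z_r$ genuinely moves apart, producing a tangent direction in $\mathsf T_{[Z]}\Hilb^{\mathbf d}\A^n$ that is \emph{not} a translation of the whole nesting and not non-negative either. Concretely, because $V$ is generically reduced of dimension $\dim V$, the tangent space at a general point has dimension exactly $\dim V$; on the other hand the Bia{\l}ynicki--Birula cell $\Hilb^{\mathbf d}_p\A^n$ through $[Z]$ has tangent space $\mathsf T^{\geqslant 0}_{[Z]}\Hilb^{\mathbf d}\A^n$ by \Cref{rem:nonneg-punctual}, and $V$ composite forces $\dim V > \dim_\C \mathsf T^{\geqslant 0}_{[Z]}\Hilb^{\mathbf d}\A^n + n$ — the extra directions coming from separating the support, which for a nesting amounts to independently moving $\supp Z_r$ relative to the fixed common support, contradicting surjectivity of $\theta$. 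Here the key input is that the translation map $\widetilde\theta$ of \Cref{rem:deftheta} moves \emph{all} of the $Z_i$ simultaneously, so the quotient $\mathsf T^{<0}$ can only be $n$-dimensional if there is no independent ``moving apart'' direction. This is the step where I would need to be careful: one must argue, exactly as in \cite{ELEMENTARY} but now for the diagram \eqref{eq:tangentvector}, that the negative-weight part of $\mathsf T_{[Z]}\Hilb^{\mathbf d}\A^n$ is spanned by translations precisely when no component through $[Z]$ is composite, using that the $\mathbb G_m$-action by homotheties has the BB cell as attracting set and that a general point of a generically reduced component is a smooth point of $V$, hence the whole of $\mathsf T_{[Z]}V = \mathsf T_{[Z]}\Hilb^{\mathbf d}\A^n$ is ``integrable''.

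\textbf{The $(\Rightarrow)$ direction and the main obstacle.} Conversely, assume $V$ is elementary. Then every nesting in $V$ is a fat nesting with a single support point $p$, and the BB cell $\Hilb^{\mathbf d}_p\A^n$ contains an open subset of $V$ near a general $[Z]$ — more precisely, $V \cong \Hilb^{\mathbf d}_p\A^n \times \A^n$ near $[Z]$ (the $\A^n$ recording the location of $p$), because $p$ can be moved freely in $\A^n$ and this accounts for all deformations that leave $V$. Using \Cref{rem:nonneg-punctual} and the generic reducedness of $V$, we get $\dim_\C \mathsf T_{[Z]}\Hilb^{\mathbf d}\A^n = \dim V = \dim \Hilb^{\mathbf d}_p\A^n + n = \dim_\C \mathsf T^{\geqslant 0}_{[Z]}\Hilb^{\mathbf d}\A^n + n$, whence $\theta$ is surjective, i.e.\ $[Z]$ has TNT. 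The \textbf{main obstacle} is justifying the local product structure $V\cong \Hilb^{\mathbf d}_p\A^n\times\A^n$ at a general point: in the non-nested case this is part of \cite[Theorem 4.9]{ELEMENTARY}, and for the nested case I would deduce it by applying that result compatibly to each layer $Z_i$ and checking that the nesting conditions — i.e.\ commutativity of all squares in \eqref{eq:tangentvector} — are preserved under the simultaneous translation, so that the translation subspace $\widetilde\theta(\mathsf T_0\A^n)$ really is an honest $n$-dimensional family inside $\Hilb^{\mathbf d}$ and splits off. The rest is the weight-space bookkeeping for the $\mathbb G_m$-action, which is routine once the single-support reduction and the product structure are in place.
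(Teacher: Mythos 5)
Your proposal follows essentially the same route as the paper's proof. For the direction ``elementary $\Rightarrow$ TNT'' both arguments rest on the étale-local product structure $\Hilb^{\mathbf d}X\simeq \Hilb^{\mathbf d}_{\supp Z}X\times X$ near a general point of $V$, the identification $\mathsf T^{\geqslant 0}_{[Z]}\Hilb^{\mathbf d}X=\mathsf T_{[Z]}\Hilb^{\mathbf d}_{\supp Z}X$ of \Cref{rem:nonneg-punctual}, generic injectivity of the map $\mathsf T_{[Z]}\Hilb^{\mathbf d}_{\supp Z}X\oplus\mathsf T_{\supp Z}X\to\mathsf T_{[Z]}\Hilb^{\mathbf d}X$ from \cite[Corollary 4.7]{ELEMENTARY}, and the dimension count made available by generic reducedness; the ``main obstacle'' you flag is exactly the point the paper addresses, and in the same way. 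The one place where you work harder than you should is the converse. Since trivial negative tangents are only defined for fat nestings, the hypothesis that a general $[Z]\in V$ has TNT already forces the general point to be a fat nesting, and because having one-point support is a closed condition this makes every point of $V$ a fat nesting, i.e.\ $V$ is elementary. Your contradiction argument via the inequality $\dim V>\dim_\C\mathsf T^{\geqslant 0}_{[Z]}\Hilb^{\mathbf d}\A^n+n$ for composite $V$ is not justified as written (and if $V$ were composite its general point would fail to be a fat nesting, so $\mathsf T^{\geqslant 0}_{[Z]}$ would not even be defined there); fortunately that step is also unnecessary.
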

\begin{proof}
Let $V\subset \Hilb^{\mathbf d} X$ be a generically reduced irreducible  component and let $[Z]\in V$  be a point having TNT. Notice that  $Z$ must be a fat nesting by definition of TNT. Moreover, $V$ is elementary because the only negative tangents are the translations (see \Cref{rem:nonneg-punctual}).

Viceversa, fix a generically reduced elementary component $V\subset  \Hilb^{\mathbf d} X$. Then, étale locally around a general point $[Z]\in V$ the nested Hilbert scheme is isomorphic to $\Hilb_{\supp Z}^{\mathbf{d}}X\times X$ via the association that  forgets about the limit and translates the support. This can be understood at the tangent space level.
Precisely, the natural map
\[
\begin{tikzcd}
    \mathsf T_{[Z]}\Hilb_{\supp Z}^{\mathbf{d}}X\oplus \mathsf T_{\supp Z} X\arrow[r]& \mathsf T_{[Z]}\Hilb^{\mathbf d} X
\end{tikzcd}
\]
 is generically injective on $V$ by \cite[Corollary 4.7]{ELEMENTARY}  and it is surjective because $V$ and $\Hilb_{\supp Z}^{\mathbf{d}}X\times X$ have the same dimension and are generically reduced and hence smooth. As a consequence, the generic point of $V$ has TNT as the generic point of $\Hilb_{\supp Z}^{\mathbf{d}}X\times X$ has. 
\end{proof}
Thanks to \Cref{thm:tnt per nested}, in the next subsection we exhibit some generically non-reduced components of the nested Hilbert scheme of points.
\subsection{Some elementary generically non-reduced components}\label{subsec:nonredcomp}
We prove now \Cref{thmintro:main2} and \Cref{thmintro:solvereduced} of the introduction.
\begin{theorem}\label{thm:main2}
    Let $X$ be a smooth quasi-projective variety and let $d\geqslant 2$ be a positive integer. Let $V\subset \Hilb^d X$ be a generically reduced elementary component. Then, the nested Hilbert scheme $\Hilb^{(1,d)} X$ has a generically non-reduced elementary component $\widetilde{V}$ such that
    \[
    \widetilde{V}_{\redu} \cong V_{\redu}.
    \]
\end{theorem}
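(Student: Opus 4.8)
The plan is to start from a general point $[Z]\in V$, which is a fat point supported (after translating, using \Cref{rem:etale}) at the origin of $\A^n$, with ideal $I_Z\subset R=\C[x_1,\dots,x_n]$ of colength $d$. By \Cref{thm:tnt per nested} (applied with $r=1$), since $V$ is a generically reduced elementary component, $[Z]$ has TNT. I will build a nesting $\widetilde Z = (p \subset Z)$ where $p$ is the reduced point at the origin, i.e.\ the nested subscheme with ideals $(\mm_0, I_Z)$ — note $I_Z\subset\mm_0$ since $Z$ is supported at the origin. This gives a point $[\widetilde Z]\in\Hilb^{(1,d)}\A^n$. The first task is to identify the tangent space $\mathsf T_{[\widetilde Z]}\Hilb^{(1,d)}\A^n$ using the description \eqref{eq:tangentvector}: a tangent vector is a pair $(\psi,\varphi)$ with $\psi\in\Hom(\mm_0,R/\mm_0)=\mathsf T_0\A^n$, $\varphi\in\Hom(I_Z,R/I_Z)$, such that the square
\[
\begin{tikzcd}
\mm_0\arrow[d,"\psi"'] & I_Z\arrow[l,hook']\arrow[d,"\varphi"]\\
R/\mm_0 & R/I_Z\arrow[l,two heads]
\end{tikzcd}
\]
commutes. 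I expect that compatibility forces $\varphi$ to lie in $\mathsf T^{\ge 0}_{[Z]}\Hilb^d\A^n$ plus the image of $\widetilde\theta$ (translations), essentially because the only way the composite $I_Z\hookrightarrow\mm_0\xrightarrow{\psi}R/\mm_0$ can equal $I_Z\xrightarrow{\varphi}R/I_Z\twoheadrightarrow R/\mm_0$ is if $\varphi$ ``preserves the order filtration at the bottom,'' and $\psi$ is then determined by the lowest-degree part of $\varphi$. Making this precise is the technical heart.

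Once the tangent space is understood, I would compute its dimension. On the one hand $\widetilde V := \overline{\{[\,p'\subset Z'\,] : [Z']\in V,\ p'=\text{support point of }Z'\}}$ is (set-theoretically) a copy of $V_{\redu}$ — the section $V\dashrightarrow\Hilb^{(1,d)}$ sending $Z'$ to $(\mathrm{supp}(Z')_{\mathrm{red}}\subset Z')$ is a morphism on the open locus of $V$ parametrizing fat points with a single support point, which includes the general point since $V$ is elementary — so $\dim\widetilde V=\dim V$. On the other hand, at a general $[\widetilde Z]$ I claim $\dim\mathsf T_{[\widetilde Z]}\Hilb^{(1,d)}\A^n = \dim\mathsf T_{[Z]}\Hilb^d\A^n$. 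The point is that the ``extra'' datum $\psi$ contributes nothing new: translations of $Z$ automatically come with the translation of the support point, and any $\psi$ compatible with some $\varphi$ is already the translation associated to $\varphi$'s linear part — so the forgetful map $\mathsf T_{[\widetilde Z]}\Hilb^{(1,d)}\to\mathsf T_{[Z]}\Hilb^d$ is injective, and it is surjective because every $\varphi\in\mathsf T_{[Z]}\Hilb^d$ lifts. Since $V$ is a generically reduced component, $\dim\mathsf T_{[Z]}\Hilb^d\A^n = \dim V$ (smoothness at the general point), hence $\dim\mathsf T_{[\widetilde Z]}\Hilb^{(1,d)}\A^n = \dim V = \dim\widetilde V$, so $\widetilde V$ is an irreducible component of $\Hilb^{(1,d)}\A^n$ and a general point of it is a smooth point — but wait, that would make it \emph{reduced}. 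The resolution, and the real subtlety: one must instead show the tangent space dimension \emph{exceeds} $\dim\widetilde V$, i.e.\ there is a genuine obstructed direction (a nontrivial $\psi$ not matched by translations), which happens precisely because at a fat point with $d\ge 2$ the socle-supported deformations $\varphi$ need not come from translations yet can still be paired with a nonzero $\psi$. So I expect $\dim\mathsf T_{[\widetilde Z]}\Hilb^{(1,d)}\A^n = \dim\mathsf T_{[Z]}\Hilb^d\A^n + n$ (the whole $\mathsf T_0\A^n$ appears independently), and this strictly exceeds $\dim\widetilde V=\dim V=\dim\mathsf T_{[Z]}\Hilb^d\A^n$.

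With the dimension count in hand, I would argue: $\widetilde V$ is a closed irreducible subset of $\Hilb^{(1,d)}\A^n$ with $\dim\widetilde V=\dim V$; I need it to be a whole component and generically non-reduced. For the component claim, since $\dim\mathsf T_{[\widetilde Z]}\Hilb^{(1,d)} = \dim V + n$ but $\widetilde V$ alone has dimension $\dim V$, I must show no larger irreducible set through $[\widetilde Z]$ exists — this follows if I can show $\Hilb^{(1,d)}_{\mathrm{red}}$ near $[\widetilde Z]$ is, étale-locally, $\widetilde V_{\redu}\times(\text{nothing besides translations})$, analogous to the Białynicki–Birula splitting in \cite{ELEMENTARY}: every nearby nesting $(p'\subset Z')$ must have $p'\subset\mathrm{supp}(Z')$, and since $[Z']$ stays in $V$ (as $V$ is a component of $\Hilb^d$), $p'$ is forced to be the reduced support point. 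Hence $\widetilde V_{\redu}\cong V_{\redu}$, and since $\dim\mathsf T_{[\widetilde Z]}\Hilb^{(1,d)}\A^n>\dim\widetilde V$, the component $\widetilde V$ is generically non-reduced. Finally, $\widetilde V$ is elementary because a general point is a fat nesting (both $Z'$ and its reduced support point are supported at one point), and TNT for this fat nesting fails exactly as the non-reducedness shows (consistent with \Cref{thm:tnt per nested}). \textbf{The main obstacle} I anticipate is the precise computation of $\mathsf T_{[\widetilde Z]}\Hilb^{(1,d)}\A^n$ via the commuting-square condition \eqref{eq:tangentvector} — specifically, verifying that the compatibility between $\psi$ and $\varphi$ does \emph{not} cut down the contribution of $\psi$, so that the tangent space genuinely jumps by $n$ (or at least by a positive amount) over the dimension of $\widetilde V$; this requires a careful analysis of how a homomorphism $I_Z\to R/I_Z$ restricts along $I_Z\hookrightarrow\mm_0$ and interacts with the surjection $R/I_Z\twoheadrightarrow R/\mm_0$, using that the socle of $R/I_Z$ (which controls $\varphi$) has nothing to do with degree-one information when $d\ge 2$.
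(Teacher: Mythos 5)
Your construction of $\widetilde V$ as the graph of the support map over $V$, and the mechanism you identify for non-reducedness --- that the deformation $\psi$ of the embedded point decouples from the deformation $\varphi$ of $Z$ --- are exactly the paper's; the only difference is that the paper packages the excess tangent directions as a failure of TNT at a general point and concludes via \Cref{thm:tnt per nested}, whereas you compare $\dim_\C \mathsf T_{[\widetilde Z]}\Hilb^{(1,d)}X$ directly with $\dim\widetilde V$. The two are equivalent. Two points in your write-up need repair. First, your flagged ``main obstacle'' resolves more simply than you fear: reducing to $X=\A^n$ with $Z$ supported at $0$ of maximal embedding dimension, one has $I_Z\subseteq\mm_0^2$, so $\psi|_{I_Z}=0$ for \emph{every} $\psi$; the square \eqref{eq:tangentvector} then constrains $\varphi$ alone (namely $\varphi(I_Z)\subseteq\mm_0/I_Z$) and leaves $\psi$ as a free $n$-dimensional parameter. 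In particular your intermediate claim that a compatible $\psi$ is determined by the linear part of $\varphi$ --- equivalently, that the forgetful map on tangent spaces is injective --- is false, and the correct statement is the starting point, not a puzzle. You must also treat embedding dimension $m<n$ separately (then $I_Z$ contains $n-m$ linear forms pinning part of $\psi$ to $\varphi$, but $m\geqslant 1$ free directions remain since $d\geqslant 2$), as the paper does. Second, the lower bound $\dim_\C\{\varphi : \varphi(I_Z)\subseteq\mm_0/I_Z\}\geqslant\dim V$ is not automatic: it follows because $\mathsf T^{\geqslant 0}_{[Z]}$ and the translations both satisfy this condition, are in direct sum by \cite[Corollary 4.7]{ELEMENTARY}, and together have dimension $\dim V$ since $V$ is generically reduced and elementary (i.e.\ $[Z]$ has TNT). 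With these repairs your dimension count gives $\dim_\C\mathsf T_{[\widetilde Z]}\Hilb^{(1,d)}X\geqslant\dim V+1>\dim\widetilde V$ and the argument closes.
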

\begin{proof} 

    Set theoretically, we have
    \begin{equation}\label{eq:Vtilde}
    \widetilde V =\Set{ [(\supp{Z},Z) ]\in \Hilb^{(1,d)} X | [Z]\in V  },
    \end{equation}
which clearly defines an elementary component since cleavability of the fat nesting  would imply cleavability of $Z$.
We show that the general point $[(p,Z)]\in \widetilde V$ has not TNT. This is enough to conclude thanks to \Cref{thm:tnt per nested}. Notice that, for  $[(p,Z)]\in\widetilde V$ general in $\Hilb^{(1,d)} X$, the point $[Z]\in  V$ has TNT because $V$ is elementary and generically reduced (\cite[Theorem 4.9]{ELEMENTARY}).
Notice also that $[p]\in \Hilb^1 X$ has TNT because $X$ is smooth and hence reduced. 

We argue as in \Cref{rem:etale} and we put  $X=\A^n$ and $p=0\in\A^n$. Let us denote by $I$ the ideal of the subscheme $Z$ and by $\mm =(x_1,\ldots,x_n)$ the (maximal) ideal of the origin. In particular, we have $\mathfrak m=\sqrt{ I}$. Let us also denote by $\iota\colon I \to \mm$ the inclusion and, by $\pi\colon R/I \to R/\mm$ the canonical projection induced by $\iota$.   Now we focus on the negative tangents. Under our assumption, a negative tangent vector to $\widetilde V$ is a diagram of the following form
\[
\begin{tikzcd}[row sep=huge,column sep=huge]
     I \arrow[r,"\frac{\partial}{\partial x_i}" ]\arrow[d,hook',"\iota"']&R/I\arrow[d,two heads,"\pi"]\\
      \mm \arrow[r,"\frac{\partial}{\partial x_j}"' ] & R/\mm,
\end{tikzcd}
\]
 for some $i,j\in\Set{1,\ldots,n}$.
Now, if  $Z$ has embedding dimension $n$ each of the generators of $I$ have zero linear part. As a consequence the diagram commutes because both the compositions $\frac{\partial}{\partial x_j}\circ \iota$ and $\pi\circ\frac{\partial}{\partial x_i}$ are zero. When $Z$ has embedding dimension $m\le n $, up to étale cover, one can suppose that the ideal $I$ is generated by $n-m$ linear forms and generators having zero linear part. To conclude, in both cases the map $\theta$ cannot be surjective for dimensional reasons.
\end{proof}
\begin{corollary}\label{cor:tomain2}
    Let $X$ be a smooth quasi projective variety. Then, for $d\geqslant 2$ all the elementary components of $\Hilb^{(1,d)} X$ are generically non-reduced. 
\end{corollary}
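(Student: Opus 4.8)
The plan is to reduce the statement to \Cref{thm:tnt per nested} by means of a direct tangent-space computation at a general point, so that neither \Cref{thm:main2} nor any reducedness hypothesis on $\Hilb^dX$ enters. Let $W\subset \Hilb^{(1,d)}X$ be an elementary component. By \Cref{thm:tnt per nested}, $W$ is generically reduced if and only if its general point has TNT, so it suffices to prove that the general point of $W$ fails to have TNT; I will in fact show that, for $d\geqslant 2$, \emph{every} fat nesting $[(Z_1,Z_2)]\in\Hilb^{(1,d)}X$ with $\len Z_1=1$ lacks TNT.

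The first step is the rigidity specific to the shape $(1,d)$: as $Z_1$ has length $1$ it is a reduced point, and as $\supp Z_1=\supp Z_2$ in a fat nesting we get $Z_1=\supp Z_2$. Reducing as in \Cref{rem:etale} to $X=\A^n$ with $\supp Z_2$ the origin, the general point of $W$ is $[(0,Z)]$ with $Z=V(I)$ a fat point of length $d\geqslant 2$ and $\mm=(x_1,\dots,x_n)=\sqrt I$; after an étale change of coordinates we may assume $I=(x_{m+1},\dots,x_n)+I''$ with $0\neq I''\subseteq(x_1,\dots,x_m)^2$, where $m\geqslant1$ is the embedding dimension of $Z$ (nonzero since $\len Z\geqslant2$).

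The second step is the computation of $\mathsf T^{<0}_{[(0,Z)]}\Hilb^{(1,d)}\A^n$ by unwinding \eqref{eq:tangentvector}. The forgetful map $\mathsf T_{[(0,Z)]}\Hilb^{(1,d)}\A^n\to\mathsf T_0\A^n$ onto the tangent space of $\Hilb^1\A^n=\A^n$ is surjective, because the diagonal pairs $\bigl(\tfrac{\partial}{\partial x_j},\tfrac{\partial}{\partial x_j}\bigr)$ make the square commute, and its kernel is $\Hom_R(I,\mm/I)\subseteq\Hom_R(I,\Calo_Z)=\mathsf T_{[Z]}\Hilb^d\A^n$; on the other hand $\mathsf T^{\geqslant0}_{[0]}\Hilb^1\A^n=0$ and every $\varphi\in\mathsf T^{\geqslant0}_{[Z]}\Hilb^d\A^n$ already satisfies $\varphi(I)\subseteq\Calo_Z^{\geqslant1}=\mm/I$, so the square with $\varphi_1=0$ is automatically commutative and $\mathsf T^{\geqslant0}_{[(0,Z)]}\Hilb^{(1,d)}\A^n\cong\mathsf T^{\geqslant0}_{[Z]}\Hilb^d\A^n$. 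Passing to negative tangents, $\mathsf T^{<0}_{[(0,Z)]}\Hilb^{(1,d)}\A^n$ surjects onto $\mathsf T^{<0}_{[0]}\Hilb^1\A^n\cong\C^n$ with kernel $\Hom_R(I,\mm/I)/\mathsf T^{\geqslant0}_{[Z]}\Hilb^d\A^n$, so it is enough to exhibit one $\varphi\in\Hom_R(I,\mm/I)$ that is not non-negative. This is the only real computation: let $g\in I''$ have minimal order $e\geqslant2$, let $f$ be its leading (lowest-degree) form, and pick $j\leqslant m$ with $\tfrac{\partial f}{\partial x_j}\neq0$. The translation $\varphi=\tfrac{\partial}{\partial x_j}$ kills the linear generators $x_{m+1},\dots,x_n$ and carries $I''$ into $\mm$, so $\varphi\in\Hom_R(I,\mm/I)$; but $\varphi(g)\equiv\tfrac{\partial f}{\partial x_j}\pmod{\mm^e}$ has order $e-1$ and does not lie in $\mm^e+I$ — project modulo $(x_{m+1},\dots,x_n)$ and use $I''\subseteq(x_1,\dots,x_m)^e$ — hence $\varphi(I\cap\mm^e)\not\subseteq\Calo_Z^{\geqslant e}$.

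It follows that $\dim\mathsf T^{<0}_{[(0,Z)]}\Hilb^{(1,d)}\A^n>n=\dim\mathsf T_0\A^n$, so the map $\theta$ of \Cref{rem:deftheta} cannot be surjective and $[(0,Z)]$ has no TNT; by \Cref{thm:tnt per nested}, $W$ is generically non-reduced. The step I expect to require the most care is the last one in the previous paragraph — that the minimal-order behaviour of $I''$ is genuinely destroyed by a single differentiation, so that the translation $\tfrac{\partial}{\partial x_j}$ really does escape $\mathsf T^{\geqslant0}$ — while everything else is a mechanical unwinding of the definitions of \Cref{subsec:TNT}, requiring, in contrast with \Cref{thm:main2}, neither the existence nor the generic reducedness of an ambient elementary component of $\Hilb^dX$.
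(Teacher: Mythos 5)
Your proof is correct and takes essentially the same route as the paper, which deduces the corollary from the argument in the proof of \Cref{thm:main2}: there too one reduces via \Cref{thm:tnt per nested} to showing that a fat nesting $(p,Z)$ with $\len Z\geqslant 2$ admits commuting squares translating $p$ and $Z$ independently, so that $\theta$ cannot be surjective. The only difference is that where the paper concludes by ``dimensional reasons'' (implicitly using that the translations of $Z$ are nonzero in $\mathsf T^{<0}_{[Z]}\Hilb^{d}X$, i.e.~\cite[Corollary 4.7]{ELEMENTARY}), you verify directly, via the leading form of a minimal-order element of $I$, that $\partial/\partial x_j$ fails to be non-negative --- a self-contained refinement of the same step rather than a different method.
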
 
\begin{remark}
   The statements of \Cref{thm:main2} and \Cref{cor:tomain2} are trivial in the following cases:
\begin{itemize}
    \item $\dim X \leqslant 2$,
    \item $\dim X=3$ and $d \leqslant 11$,
    \item $\dim X \geqslant 4$ and $d \leqslant 7$.
\end{itemize}
This is true because in the above mentioned cases the Hilbert scheme of points is irreducible.
\end{remark}
\begin{example}\label{ex:finalenested}
It is well known that the Hilbert scheme of eight points in $\A^4$ is reducible and it consists of two components
\[
\Hilb^8 \A^4 =H\cup V.
\]
Since $\Hilb^7 \A^4 $ is irreducible, one among $H$ and $V$ must be elementary.  Here we denote by $H$ the smoothable component and by $V$ the elementary one.
An example of non-smoothable point is provided by the ideal $[I]\in V$ defined by
\[
I=(x,y)^2+(y,z)^2+(xz-yw) \subseteq \C[x,y,z,w],
\]
see \cite{8POINTS}. The component $V$ is known to be generically reduced. Precisely, we have $V\cong\Gr(7,10)\times \A^4$. Hence, the tangent space at the generic point of $V$ is 25-dimensional.

    Consider now the elementary component $\widetilde V \subset \Hilb^{(1,8)} \A^4 $ defined as in \eqref{eq:Vtilde}.
    Let us denote by $\mathfrak m$ in $\C[x,y,z,w]$ the maximal ideal of the origin $0\in\A^4$. Then, we have $[\mathfrak m\supset I]\in \widetilde V$. And a direct computation shows that
    \[
    \dim_\C \mathsf T_{[\mathfrak m\supset I]}\widetilde V=\dim_\C \mathsf T_{[\mathfrak m\supset I]} \Hilb^{(1,8)} \A^4=29.
    \]
\end{example}

Although we are able to determine when the first generically   non-reduced component of $X^{[\mathbf d]}$ arises only for $\dim X\ge 4$, we can still say something about the lower dimension cases. Indeed, \Cref{thm:solvereduced} answers negatively to the open question regarding the reducedness of the nested Hilbert scheme of points on surfaces.
It is though worth mentioning that, because of its applications the reducedness problem concerns only nestings of length one  in its original formulation, see \cite[Problem X]{JoachimQuestions}. Here, we prove the existence of generically non-reduced components. The problem of determining an explicit component and the minimal length of a nesting producing generically non-reduced components remains open.

\begin{theorem}\label{thm:solvereduced}
    Let $S $ be a smooth quasi-projective surface. Then, there exists a non-decreasing sequence of non-negative integers $\mathbf d\in\mathbb Z^r$ such that $S^{[\mathbf{d}]}$ has a generically non-reduced component. 
\end{theorem}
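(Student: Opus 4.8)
The plan is to reduce to the affine plane and then combine a nested version of \Cref{thm:main2} with the known reducibility of nested Hilbert schemes of surfaces. Since having a generically non-reduced component is a local property, we may take $S=\A^2=\Spec R$ with $R=\C[x,y]$. The first ingredient is the following nested analogue of \Cref{thm:main2}, proved by the same argument: \emph{if $\mathbf e=(e_1,\ldots,e_s)$ is non-decreasing and $W\subseteq\Hilb^{\mathbf e}\A^2$ is a generically reduced elementary component whose general member is a genuinely fat nesting — after discarding leading entries equal to $1$ we may assume $e_1\geqslant 2$ — then $\Hilb^{(1,\mathbf e)}\A^2$ has a generically non-reduced elementary component $\widetilde W$ with $\widetilde W_{\redu}\cong W_{\redu}$.}

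To prove this, put
\[
\widetilde W=\Set{\big[(\supp Z,\,Z_1,\ldots,Z_s)\big]\in\Hilb^{(1,\mathbf e)}\A^2 | [Z_\bullet]\in W},\qquad \supp Z:=\supp Z_s .
\]
The forgetful morphism $[(\supp Z,Z_\bullet)]\mapsto[Z_\bullet]$ is a section of $[Z_\bullet]\mapsto[(\supp Z,Z_\bullet)]$, so $\widetilde W_{\redu}\cong W_{\redu}$. The set $\widetilde W$ defines an elementary component: in a flag of the form $(\supp Z,Z_\bullet)$ the bottom point is contained in every $Z_i$, so in a general deformation $(q_0,Y_\bullet)$ of a cleavable such flag one would have $q_0\subseteq Y_1\subseteq\ldots\subseteq Y_s$, hence $q_0\in\supp Y_s$, forcing $\supp Y_s$ to consist of several points and thus cleaving $Z_\bullet$ — impossible since $W$ is elementary. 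Finally the general $[(\supp Z,Z_\bullet)]\in\widetilde W$ fails to have TNT: the general $[Z_\bullet]\in W$ has TNT by \Cref{thm:tnt per nested}, so $Z_\bullet$ and the bottom point contribute only translations to the negative tangent space, whereas the extra square
\[
\begin{tikzcd}[column sep=large]
\Cali_{Z_1}\arrow[r,"\frac{\partial}{\partial x_i}"]\arrow[d,hook'] & R/\Cali_{Z_1}\arrow[d,two heads]\\
\mm\arrow[r,"\frac{\partial}{\partial x_j}"'] & R/\mm
\end{tikzcd}
\]
commutes for a set of pairs $(i,j)$ strictly larger than the diagonal — here one uses, exactly as in the proof of \Cref{thm:main2}, that $\Cali_{Z_1}$ is generated by linear forms together with elements of zero linear part and that $Z_1$ has embedding dimension at least $1$. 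Hence the translation map $\theta$ of \Cref{rem:deftheta} is not surjective, and \Cref{thm:tnt per nested} gives that $\widetilde W$ is generically non-reduced.

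It then suffices to exhibit some $\mathbf e$ for which $\Hilb^{\mathbf e}\A^2$ has a non-trivial elementary component $W$; then $\mathbf d=\mathbf e$ works if $W$ is already generically non-reduced, and $\mathbf d=(1,\mathbf e)$ otherwise. For this I would fix $\mathbf e$ with $\Hilb^{\mathbf e}\A^2$ reducible (such $\mathbf e$ exist, e.g. with $r\geqslant 5$, by \cite{ALESSIONESTED,IRRonenested}) and choose a component $W'$ other than the smoothable one; a general $[Z_\bullet]\in W'$ is a nesting which is not a flat limit of nestings of distinct points, and decomposing $\Hilb^{\mathbf e}\A^2$ étale-locally around $[Z_\bullet]$ as a product of punctual nested Hilbert schemes indexed by the points of $\supp Z_s$ exhibits at least one factor as a non-trivial elementary component of a shorter surface nested Hilbert scheme. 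This last step is the main obstacle: on a surface every single zero-dimensional scheme is smoothable, so the whole obstruction to smoothing a fat nesting is carried by the flag conditions, and turning ``$\Hilb^{\mathbf e}\A^2$ is reducible'' into ``there is a genuine maximal family of fat nestings'' is precisely where the analysis of \cite{ALESSIONESTED,IRRonenested} enters — with the rank-$4$ example of \Cref{ex:finalenested} as the prototype of the rigid-flag behaviour one is forced to realise on $\A^2$ at the cost of a long nesting. Because of this the argument is non-constructive, which is why it produces neither an explicit $\mathbf d$ nor the minimal length of a nesting with a generically non-reduced component.
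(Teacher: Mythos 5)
Your proof follows the paper's own argument essentially verbatim: obtain an elementary component of some nested Hilbert scheme $\Hilb^{\mathbf e}$ of a surface from the reducibility results of \cite{ALESSIONESTED}, assume it generically reduced, discard leading entries equal to $1$, prepend a reduced point, and run the trivial-negative-tangents argument of \Cref{thm:main2} to conclude that the resulting elementary component is generically non-reduced. The only difference is that you spell out the step \lq\lq reducible $\Rightarrow$ some non-trivial elementary component exists\rq\rq\ via the \'etale-local product decomposition of components, which the paper asserts without detail; this is a faithful expansion rather than a different route.
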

\begin{proof}
    By the results in \cite{ALESSIONESTED}, we know that the nested Hilbert scheme of points on surfaces is in general reducible. Hence, there exists a  non-decreasing sequence of non-negative integers $\mathbf d\in\mathbb Z^r$ such that $S^{[\mathbf{d}]}$ has an elementary non-standard component $V\subset S^{[\mathbf{d}]}$. Suppose that $V$ is generically reduced.

    Without loss of generality, we can suppose $d_1>1$. Indeed, if $d_1=1$ then also $S^{[(d_2,\ldots,d_r)]}$ has an elementary component.
    
    Let us now consider the non-decreasing vector $\widetilde{\mathbf{d}}=(1,d_1,\ldots,d_r)\in\mathbb Z^{r+1}$. Define $\widetilde V\subset S^{[\widetilde{\mathbf{d}}]}$ to be the locus
    \[
    \widetilde V=\Set{[Z_0\subset \cdots\subset Z_r] \in S^{[\widetilde{\mathbf{d}}]} | [Z_1 \subset \cdots \subset  Z_r]\in V  }.
    \]
   Then, $\widetilde V$ is an elementary and generically non-reduced irreducible component, as follows by the proof of \Cref{thm:main2}.
\end{proof}

\subsection{Elementary components of classical Hilbert schemes of points}\label{subsec:newelem}
The component $V$ in \Cref{ex:finalenested} was introduced in \cite{Iarrob} and then generalised firstly in \cite{ELEMENTARY} and secondly in \cite{Satrianostaal}. Other examples of elementary components of Hilbert schemes of points can be found in \cite{SomeElementary,MoreElementary,Sha90}. We conclude this paper by proposing an alternative generalisation which gives a new class of elementary components of the Hilbert schemes of points.
\begin{theorem}\label{prop:newcomp}
    Let $R=\C[x_1,\ldots,x_n,y_1,\ldots,y_n]$, for $n\geqslant 2$, be the polynomial ring in $2n$ variables and complex coefficients.
    Then, the ideal
    \begin{equation}
        \label{eq:elementary} I=\sum_{i=1}^n (x_i,y_i)^2+ (x_1\cdots x_n -y_1\cdots y_n),
    \end{equation}
    has TNT. Therefore, by \cite[Theorem 4.5]{ELEMENTARY} every component of the Hilbert scheme $\Hilb^{3^n-1} \A^{2n}$ containing it is elementary.
\end{theorem}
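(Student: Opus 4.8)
The plan is to prove that $[Z]$, where $Z=\Spec R/I$, has TNT in the sense of \Cref{def:negativetangents} (with $r=1$), so that the last assertion follows from \cite[Theorem 4.5]{ELEMENTARY}. Set $A=R/I$, $\mm=(x_1,\dots,x_n,y_1,\dots,y_n)$ and $g=x_1\cdots x_n-y_1\cdots y_n$. First I would record the structure of $A$. Since $R/\sum_{i}(x_i,y_i)^2\cong\bigotimes_{i=1}^{n}\C[x_i,y_i]/(x_i,y_i)^2$ has $\C$-basis the $3^n$ monomials $\prod_i m_i$ with $m_i\in\{1,x_i,y_i\}$, and since $\mm\cdot g\subseteq\sum_i(x_i,y_i)^2$ while $g\notin\sum_i(x_i,y_i)^2$, passing to $A$ identifies exactly the two basis monomials $x_1\cdots x_n$ and $y_1\cdots y_n$. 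Hence $A$ is a graded local Artinian algebra with $\dim_\C A=3^n-1$, of embedding dimension $2n$, minimally generated by $g$ together with the $3n$ quadrics $x_i^2,x_iy_i,y_i^2$; one has $A_e=0$ for $e>n$, and for $e<n$ the space $A_e$ is the direct sum, over the $e$-subsets $S$ of $\{1,\dots,n\}$, of the spans of the degree-$e$ monomials involving only the variable pairs indexed by $S$. The one structural fact I would prove is $\operatorname{soc}(A)=A_n$: the inclusion $\supseteq$ holds as $A_{n+1}=0$; conversely if $0\neq v\in A_e$ with $e<n$, write $v=\sum_{|S|=e}v_S$ according to the used pairs, pick $S$ with $v_S\neq0$ and an index $k\notin S$, and observe that multiplication by $x_k$ sends the monomials in question to linearly independent elements of $A_{e+1}$ (they never involve $y_k$, so the single relation $x_1\cdots x_n=y_1\cdots y_n$ is irrelevant), whence $x_kv\neq0$.

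Next I would compute the negative-degree graded pieces of $\mathsf T_{[Z]}\Hilb^{3^n-1}\A^{2n}\cong\Hom_R(I,A)$, and it suffices to bound them from above. For this I impose only: for each $i$, the syzygies $y_i\cdot x_i^2=x_i\cdot(x_iy_i)$ and $y_i\cdot(x_iy_i)=x_i\cdot y_i^2$ of $(x_i,y_i)^2$, together with the two syzygies
\[
x_i\,g=\left(\prod_{j\neq i}x_j\right)x_i^2-\left(\prod_{j\neq i}y_j\right)(x_iy_i),\qquad y_i\,g=\left(\prod_{j\neq i}x_j\right)(x_iy_i)-\left(\prod_{j\neq i}y_j\right)y_i^2 .
\]
Koszul syzygies among the generators impose nothing, since each generator of $I$ is zero in $A$. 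For $\varphi\in\Hom_R(I,A)$ of degree $d\leq-2$: the quadrics map into $A_{2+d}$, and the syzygies of the $(x_i,y_i)^2$ (read in $A_1$ if $d=-2$, trivially if $d\leq-3$) force $\varphi(x_i^2)=\varphi(x_iy_i)=\varphi(y_i^2)=0$; then the $g$-syzygies give $x_i\varphi(g)=y_i\varphi(g)=0$ for all $i$, so $\varphi(g)\in\operatorname{soc}(A)=A_n$, while $\varphi(g)\in A_{n+d}$ with $n+d<n$, forcing $\varphi(g)=0$. Thus $\Hom_R(I,A)_d=0$ for $d\leq-2$. For $\varphi$ of degree $-1$: the syzygies of the $(x_i,y_i)^2$, read in $A_2$, force $\varphi(x_i^2),\varphi(x_iy_i),\varphi(y_i^2)\in\C x_i\oplus\C y_i$, say $\varphi(x_i^2)=a_ix_i+b_iy_i$, $\varphi(x_iy_i)=c_ix_i+d_iy_i$, $\varphi(y_i^2)=e_ix_i+f_iy_i$; expanding the two $g$-syzygies in $A_n$ and reading off coefficients with respect to the explicit basis of $A_n$ then forces $b_i=e_i=0$, $a_i=2d_i$, $f_i=2c_i$, and forces the component of $\varphi(g)$ spanned by monomials not involving $x_i,y_i$ to equal $d_i\prod_{j\neq i}x_j-c_i\prod_{j\neq i}y_j$, so that $\varphi(g)$ is completely determined by the $c_i,d_i$. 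Hence $\dim_\C\Hom_R(I,A)_{-1}\leq2n$.

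To finish, the $2n$ partial derivatives $\partial/\partial x_k,\partial/\partial y_k$ define homomorphisms $I\to A$ by $f\mapsto(\partial f/\partial x_k)+I$ ($R$-linearity because $(\partial r/\partial x_k)f\in I$ whenever $f\in I$), they have degree $-1$, and they are linearly independent since evaluating on $x_k^2$, resp.\ $y_k^2$, isolates each of them. Therefore $\dim_\C\Hom_R(I,A)_{-1}=2n$ and these translations span it; combined with $\Hom_R(I,A)_d=0$ for $d\leq-2$, the negative tangent space $\mathsf T^{<0}_{[Z]}\Hilb^{3^n-1}\A^{2n}$ equals $\Hom_R(I,A)_{-1}$ and is spanned by translations, i.e.\ the map $\theta$ of \Cref{rem:deftheta} is surjective and $[Z]$ has TNT; the last sentence of the statement is then \cite[Theorem 4.5]{ELEMENTARY}. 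The crux, and the only computationally delicate step, is the degree $-1$ analysis: everything rests on the fact that, modulo the single relation $x_1\cdots x_n=y_1\cdots y_n$, the products $x_i\cdot m$ with $m$ ranging over the degree-$(n-1)$ monomials that avoid the pair $(x_i,y_i)$ stay linearly independent in $A_n$ (none of them involves $y_i$), which is precisely what yields the vanishings $b_i=e_i=0$ and the relations $a_i=2d_i,\ f_i=2c_i$, and hence the sharp bound $2n$ rather than mere finiteness.
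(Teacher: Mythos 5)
Your proof is correct and follows essentially the same route as the paper's: the same graded decomposition of $\Hom_R(I,R/I)$, the same observation that the socle of $R/I$ is concentrated in degree $n$ (killing all weights $\leqslant -2$), and the same $4n$ syzygies to pin down the weight $-1$ piece. The only difference is cosmetic: where the paper counts $6n-4n=2n$ via the auxiliary space $H_I$ and invokes \cite[Corollary 4.7]{ELEMENTARY}, you solve the linear system explicitly and match the bound against the $2n$ translation vectors, which is the same computation carried out in coordinates.
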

\begin{proof}
In the proof we will denote by $f\in I$ the generator $f=x_1\cdots x_n -y_1\cdots y_n$. Moreover, we denote by $x_{\widehat i}$ the product 
\[
x_{\widehat i}=\prod_{j\not=i}x_j=\frac{\partial f}{\partial x_i}, \quad y_{\widehat i}=\prod_{j\not=i}y_j=\frac{\partial f}{\partial y_i}.
\]

First notice that the ideal $\sum_{i=1}^n (x_i,y_i)^2$ has colength $3^n$ because it cuts out the product of $n$ zero-dimensional schemes of length $3$. As a consequence, we have $[I]\in \Hilb^{3^n-1} \A^{2n}$.

Now observe that the socle of $I$ is concentrated in degree $n$. Indeed, a monomial basis $\mathcal{N}$ for the quotient $R/I$ is given by
\begin{equation}
    \label{eq:socnewcomp}
    \mathcal{N}=\Set{\prod_{j\in A} x_j \cdot \prod_{j\in B}y_j | A,B\subset \Set{1,\ldots,n} \mbox{ and }A\cap B=\emptyset,}
\end{equation}
and a monomial belongs to the socle if and only if all the indices $i=1,\ldots,n$ appear in \eqref{eq:socnewcomp}, i.e. if and only if $A\cup B=\Set{1,\ldots,n}$. As a consequence a monomial  in $\mathcal{N}$ is a socle element if and only if it has degree $n$.

    The ideal $I$ has $4n$ syzygies. They are of two kinds:
    \begin{enumerate}
        \item\label{it:syz1} $y_i\cdot\left( x_i^{2-k}y_i^k \right)-x_i\cdot\left(x_i^{1-k}y_i^{k+1}\right)$, for $i=1,\ldots,n$ and $k=0,1$,
        \item\label{it:syz2} $x_i^{1-k}y_i^k\cdot f-x_{\widehat i}\cdot\left(x_i^{2-k}y_i^{k}\right)+y_{\widehat i}\cdot\left(x_i^{1-k}y_i^{k+1}\right)$, for $i=1,\ldots,n$ and $k=0,1$.
    \end{enumerate} 

    The ideal $I$ is homogeneous, hence $\mathbb G_m$-invariant, see \Cref{subsec:TNT}. In this setting, the $\mathbb G_m$-action lifts to the tangent space at $[I]$. This induces a decomposition of  $\mathsf T_{[I]} \Hilb^{3^n-1} \A^{2n}$  as a direct sum
    \begin{equation}
    \label{eq:splttorus}
            \mathsf T_{[I]} \Hilb^{3^n-1} \A^{2n}=\bigoplus_{k\in\Z} \mathsf T_{[I]}^{=k} \Hilb^{3^n-1} \A^{2n},
    \end{equation}
    where $\mathbb G_m$ acts on $\mathsf T_{[I]}^{=k} \Hilb^{3^n-1} \A^{2n}$ with weight $k$. Now, the negative tangent space naturally identifies with the direct sum of the negative parts in \eqref{eq:splttorus}, i.e.
    \[
    \mathsf T_{[I]}^{<0}\Hilb^{3^n-1} \A^{2n}=\bigoplus_{k<0} \mathsf T_{[I]}^{=k} \Hilb^{3^n-1} \A^{2n},
    \]
    see \cite[Section 2]{ELEMENTARY}. In order to prove the statement we first show that 
    \[
    \bigoplus_{k<-1} \mathsf T_{[I]}^{=k} \Hilb^{3^n-1} \A^{2n}=0,
    \]
    and then we show that 
    \[
    \dim_\C  \mathsf T_{[I]}^{=-1} \Hilb^{3^n-1} \A^{2n}=2n.
    \]
    This is enough to conclude thanks to \cite[Corollary 4.7]{ELEMENTARY}.
    Let $b\in I $ be a generator of degree two and let $\varphi\in \bigoplus_{k<-1} \mathsf T_{[I]}^{=k} \Hilb^{3^n-1} \A^{2n}$ be a tangent vector. Then, the syzygies in (\ref{it:syz1}) implies that $\varphi(b)=0$ and this, together with syzygies (\ref{it:syz2}), implies that $\varphi(f)$ belongs to the socle of $I$. As we already explained the socle is concentrated in degree $n$ and the degree of $\varphi$ implies $\varphi(f)=0$. As a consequence $\varphi\equiv0$.

    Let us now consider some tangent vector  $\varphi\in  \mathsf T_{[I]}^{=-1} \Hilb^{3^n-1} \A^{2n} $  of weight $-1$. The syzygies (\ref{it:syz1}) and (\ref{it:syz2}) imply that
    \begin{enumerate}[\it (i)]
        \item\label{it:fin1} $\varphi(x_iy_i)\in\Span_\C(x_i,y_i)$, for $i=1,\ldots,n$,
        \item\label{it:fin2} $\varphi(x_i^2)\in\Span_\C(x_i)$, for $i=1,\ldots,n$,
        \item\label{it:fin3} $\varphi(y_i^2)\in\Span_\C(y_i)$, for $i=1,\ldots,n$,
        \item\label{it:fin4} $\varphi(f)\in\Span_\C\left(\frac{\partial f}{\partial x_i},\frac{\partial f}{\partial y_i}\ \middle\vert\ i=1,\ldots,n\right)$.
    \end{enumerate}

Let now $W_I\subset R$ be the finite-dimensional complex vector subspace generated by the minimal set of generators of $I$ given in \eqref{eq:elementary}. Let us denote by $H_I$ the following vector subspace
\[
H_I=\Set{ \varphi\in \Hom_\C(W_I,R/I)| \textit{(\ref{it:fin1})},\ldots,\textit{(\ref{it:fin4})}  }\subset \Hom_\C(W_I,R/I).
\]
Then, we have a natural inclusion
\[
\Hom_R(I,R/I)\subset H_I,
\]
coming from a forgetful functor and $\dim_\C H_I=6n$.
To conclude, notice that each syzygies of the same type (\ref{it:syz1}) or (\ref{it:syz2}) impose independent conditions on independent generators. Therefore, each set of syzygies imposes $2n$ conditions. It remains to observe that there is no relation between type (\ref{it:syz1}) and (\ref{it:syz2}). This is true for degree reasons, indeed the ideal $I$ is homogeneous and hence its syzygies are homogeneous as well and, having different degrees they are independent.

As a consequence, the vector subspace $\Hom_R(I,R/I)\subset H_I$ has complex dimension $6n-4n=2n$.
\end{proof}
\begin{remark}
    \Cref{prop:newcomp}, together with the techniques in \cite{Satrianostaal} suggests a possible way to find elementary components of the Hilbert schemes of points. Precisely, first one can construct an elementary component $V\subset \Hilb^d \A^n$ for some $n,d\ge 1$ using \Cref{prop:newcomp} and then by arguing as in \cite{Satrianostaal}  add socle elements to the ideals in $V$ in order to get elementary components for smaller $d$. Moreover, looking at obstruction spaces one can also study the smoothness of the point $[I]$ of the Hilbert scheme. We leave this analysis for future research.
\end{remark}


\end{document}